\documentclass[11pt]{amsart}
\usepackage{mathrsfs}
\usepackage{enumerate}
\usepackage{amssymb,amsmath,amsfonts,amsthm}
\usepackage{graphicx,subfigure,epstopdf}
\usepackage[usenames]{color}
\usepackage{url}
\usepackage{algorithm,algorithmic}
\usepackage{dsfont,verbatim}
\usepackage[colorlinks,linktocpage,linkcolor=blue]{hyperref}
\usepackage[margin=1.2in]{geometry}
\usepackage{enumerate,enumitem}
\usepackage[normalem]{ulem}
\usepackage{bm}
\usepackage{booktabs}
\usepackage{caption2}
\allowdisplaybreaks

\definecolor{darkred}{rgb}{.7,0,0}

\definecolor{green}{rgb}{0,0.7,0}



\newtheorem{theorem}{Theorem}[section]
\newtheorem{lemma}{Lemma}[section]
\newtheorem{proposition}{Proposition}[section]
\newtheorem{corollary}{Corollary}[section]

\newtheorem{remark}{Remark}[section]

\def\cC{\mathcal{C}}
\def\cM{\mathcal{M}}
\def\cLg{\mathcal{L}_\cM}
\def\bR{\mathbb{R}}
\def\ng{\nabla_\cM}

\def\e{\overline{e}}

\def\cI{\mathcal{I}}
\def\cLgh{\mathcal{L}_{\cM_h}}
\def\cK{\mathcal{K}}
\def\ngh{\nabla_{\cM_h}}
\def\vnu{\vec{\nu}}

\def\cBh{\mathcal{B}_h}
\def\phj{\psi_{j,h}}
\def\lhj{\lambda_{j,h}}
\def\Fto{{}_2F_1}

\def\cM{\mathcal{M}}
\def\cP{\mathcal P} 

\def\mH{\mathbb{H}}
\def\mL{\mathbb{L}}
\def\mbM{\mathbf{M}}
\def\mbS{\mathbf{S}}
\def\mbI{\mathbf{I}}
\def\mbR{\mathbf{R}}
\def\nell{\bar{\ell}}
\def\tell{l}
\def\tgamma{{\tilde{\gamma}}}
\def\tlambda{{\hat{\lambda}}}

\newcommand{\vertiii}[1]
{{\left\vert\kern-0.25ex\left
\vert\kern-0.25ex\left\vert #1
    \right\vert\kern-0.25ex\right
\vert\kern-0.25ex\right\vert}}


\newtheorem{example}{Example}[section]
\newtheorem{assumption}{\textsc{Assumption}}

\newcommand{\INDSTATE}[1][1]{\STATE\hspace{#1\algorithmicindent}}

\begin{document}
	
\title[Pad\'e-parametric FEM  approximation]{Pad\'e-parametric FEM approximation for fractional powers of  elliptic operators on manifolds}
\author{Beiping Duan}
\address{Faculty of Computational Mathematics and Cybernetics, Shenzhen MSU-BIT University, Shenzhen  518172, P.R. China.}
\email {duanbeiping@hotmail.com }
\date{}
\subjclass[2020]{Primary 65N12, 65R20, 65N30; Secondary 58J05}

\keywords{Fractional powers of elliptic operators, parametric FEM, Pad\'e approximation, manifolds}

\begin{abstract}
	This paper focuses on numerical approximation for fractional powers of elliptic operators on $2$-d manifolds. Firstly, parametric finite element method is employed to discretize the original problem. We then approximate fractional powers of the discrete elliptic operator by the product of rational functions, each of which is a diagonal Pad\'e approximant for corresponding power function. Rigorous error analysis is carried out and sharp error bounds are presented which show that the scheme is robust for $\alpha\rightarrow 0^+$ and $\alpha \rightarrow 1^-$. The cost of the proposed algorithm is solving some elliptic problems. Since the approach is exponentially convergent with respect to the number of solves, it is very efficient. Some numerical tests are given to confirm our theoretical analysis and the robustness of the algorithm.
\end{abstract}
\maketitle

\section{Introduction}\label{Se:1}

Suppose $\cM$ is a $2$-dimensional compact and orientable manifold with $C^3$-smoothness in $\bR^{3}$ (see \cite[Section 2.1]{dziuk2013finite} for the definition of the smoothness), and let $\Gamma$ denote its boundary. We introduce the  self-adjoint operator $\cLg$ defined on $\cM$ by 
\begin{equation*}
	\int_{\cM}\cLg w\,v d\cM=\int_{\cM}a(\vec{x}) \ng w(\ng v)^T+b(\vec{x})w v\,d\cM,\quad \forall w,v\in \mH^1(\cM)
\end{equation*}
 where $\ng$ is the  gradient operator on $\cM$, which maps a scalar to a row vector. The space $\mH^1(\cM)$ shall be clarified later. For functions $a(\vec{x}), b(\vec{x}): \cM\rightarrow \mathbb R^+$, we assume along this paper that $a(\vec{x})\in H^1(\cM)$ with lower bound $\underline{a}>0$ and upper bound $\overline{a}$; $b(\vec{x})\in L^\infty(\cM)$ holds a.e. $ 0\le b(\vec{x})\le \bar{b}$.  For the Sobolev spaces and the  calculus on manifolds, one may refer to \cite{dziuk2013finite} or \cite{aubin1998some} for more details.
%

Consider the equation $\cLg w=f$, with $f$ square integrable on $\cM$. For $b(\vec{x})=0$ and $\Gamma=\emptyset$ it requires the compatibility condition $\int_{\cM}f d\cM=0$ since $$\int_{\cM}f d\cM=\int_{\cM}a(\vec{x}) \ng w(\ng 1)^T d\cM=0,$$
and we further require $\int_{\cM} w d\cM=0$ for the uniqueness.  So to unify the notations, we introduce the following symbols of normed spaces for later analysis:
  \begin{equation*}
 	\begin{aligned}
 		\mL^2(\cM)=\left\{
 		\begin{matrix}
 			L^2(\cM),  &  \mbox{if } \Gamma\neq \emptyset, \mbox{ or }\Gamma=\emptyset, b(\vec{x})\not\equiv 0 \\
 			\{v|v\in L^2(\cM), \int_{\cM}v \, d\cM=0\},  &  \mbox{if } \Gamma= \emptyset 	\mbox{ and }b(\vec{x})\equiv 0
 		\end{matrix}
 		\right.
 	\end{aligned}
 \end{equation*}
and
 \begin{equation*}
 	\begin{aligned}
 		\mH^1(\cM)=\left\{
 		\begin{matrix}
 			H_0^1(\cM),  &\quad \mbox{if } \Gamma\neq \emptyset \\
 			H^1(\cM), & \mbox{ if } \Gamma=\emptyset \mbox{ and } b(\vec{x})\not\equiv 0 \\
 			\{v|v\in H^1(\cM), \int_{\cM}v \, d\cM=0\},  &\mbox{ if } \Gamma= \emptyset 	\mbox{ and } b(\vec{x})\equiv 0
 		\end{matrix}
 		\right.
 	\end{aligned}
 \end{equation*}
and $\mH^2(\cM)=\mH^1(\cM)\cap H^2(\cM)$. Besides, we use $\mH^{-1}(\cM)$ to represent the dual space of $\mH^1(\cM).$

 Under the assumptions on $a(\vec{x})$ and $b(\vec{x})$ it follows that the bilinear form corresponding to $\cLg$ is continuous and coercive on $\mH^1(\cM)\times \mH^1(\cM)$. So thanks to Lax-Milgram lemma we know there exists a unique solution $w\in \mH^1(\cM)$.
 
  Suppose $\alpha\in(0,1)$. In this paper we investigate numerical approximation for the following problem:
\begin{equation}\label{eq-original}
\begin{aligned}
		\cLg^{\alpha} u&=f\quad \mbox{with } f\in \mL^2({\cM}),\\
\end{aligned}
\end{equation}
where $\cLg^{\alpha}$ represents the fractional power of $\cLg$ of  order $\alpha$. Note that $\cLg$ is a sectorial operator and in particular, is positive definite, so its negative powers are well defined on $\mL^2(\cM)$, given by Dunford-Taylor formula, see, e.g., \cite{Kato1961}, 
\begin{equation*}
	\cLg^{-\alpha}f = \frac {1}{2\pi i} \int _{\cC} {z^{-\alpha}}  (z\cI-\cLg)^{-1}    f \,dz
\end{equation*}
where $\cC$ is any contour on complex plane surrounds the spectra of $\cLg$ and excludes the negative
real axis.

\subsection{Motivation}
Nonlocal operators have a wide range of applications, especially in physics and image processing, see  e.g., \cite{constantin1999behavior,bakunin2008turbulence,eringen2002nonlocal,gilboa2008nonlocal,duvant2012inequalities,mikki2020theory}, so numerical approaches for problems involving such operators have attracted much attention. For more background or numerical approaches, one may refer to the recent review work \cite{delia2020numerical}.  As a special category of nonlocal operators, fractional powers of positive definite operators or more generally, of sectorial operators, are actually direct products of functional calculus. Numerical methods for such operators also receive more and more eyes, e.g., applying quadrature schemes on integral representations, utilizing the best uniform rational approximations and solving Caffarelli-Silvestre extension problem et al., and we recommend a comprehensive review paper \cite{harizanov2020survey} for interested readers. In fact all the schemes which seem quite diverse, ``can be interpreted as realizing different rational approximations of a univariate function over the spectrum of the original (nonfractional) diffusion operator'' \cite{hofreither2020a}.

 However, all the considerations in the work we mentioned above and the references therein is restricted in Euclidean spaces, and to our best knowledge, no work discusses numerical approximation for  fractional powers of elliptic operators on manifolds except \cite{bonito2021approximation}. In this very recent work, the authors generalized the Bonito-Pasciak quadrature scheme proposed in \cite{bonito2015numerical,bonito2016numerical}, which was obtained by applying trapezoidal rule for corresponding integral representation,  to Laplace-Beltrami operators on 2-d closed surfaces and presented full error analysis.  
 
 We would like to mention that, except pure  mathematical interest, we focus our attention on this issue partly because its application in 
 Gaussian random fields on manifolds, see e.g.,  \cite{bolin2020numerical,lang2015isotropic,lindgren2011an}. 

\subsection{Our approach}
The operator $ \cLg: \mH^1(\cM)\rightarrow \mH^{-1}(\cM)$ has real eigenfunctions and positive eigenvalues  $\{\lambda_{j},\psi_{j}\}_{j=1}^{\infty}$ due to its self-adjoint property and Poincar\'e inequality. And by the  compactness of $\cLg^{-1}$ from $\mH^{-1}(\cM)$ to $\mH^{1}(\cM)$, we know $\{\psi_j\}_{j=1}^{\infty}$ form complete orthogonal basis in $\mH^{-1}(\cM)$. Suppose $\{\psi_j\}_{j=1}^\infty$ are  normalized with respect to the $L^2(\cM)$-inner product. Then equivalently, for any $\alpha\in \bR^+$ 
we can define $\cLg^\alpha$ through the spectra of $\cLg$, namely, 
\begin{equation*}
	\cLg^\alpha  v:= \sum_{j=1}^{\infty} \lambda_j^\alpha (v,\psi_j)_{\cM} \,\psi_j \   \mbox{ for } \   v \in D(\cLg^\alpha),
\end{equation*}
where $(\cdot,\cdot)_{\cM}$ denotes the $L^2$-inner product on $\cM$, $D(\cLg^\alpha)$ is the domain of $\cLg^{\alpha}$ given by
\begin{equation*}
	D(\cLg^\alpha):= \left\{ v \in \mL^2(\cM)\Big| \ \sum_{j=1}^{\infty} \lambda_j^{2\alpha} |(v,\psi_j)_\cM|^2<\infty \right\}.
\end{equation*}
We use $\dot \mH^\gamma(\cM)$ to denote $D(\cLg^{\gamma/2})$ equipped with the norm
$$\|v\|_{\dot \mH^{\gamma}(\cM)} = \left (\sum_{j=1}^\infty \lambda_j^{\gamma} |(v,\psi_j)_\cM|^2 \right )^{1/2}.$$
For $\gamma\in [0,2]$ we denote by $\mH^\gamma(\cM)$ the intermediate space between $\mL^2(\cM)$ and $\mH^2(\cM)$ obtained from the real interpolation method, then $\mH^\gamma(\cM)$  and $\dot \mH^\gamma(\cM)$ coincide with equivalent norms(see \cite{guermond2009the,bonito2015numerical}). In fact, by considering the auxiliary equation $\cLg w=f$, one can easily verify that $\cLg^{-1}$ is a bounded map of $\mL^2(\cM)$ into $\mH^2(\cM)$ under our previous assumptions on $a(\vec{x})$ and $b(\vec{x})$. Further, it is also easy to check $\cLg$ is bounded from $\mH^2(\cM)$ to $\mL^2(\cM)$. So thanks to \cite[Proposition 4.1]{bonito2015numerical} the desired equivalence follows.

Assume that $\cM$ is approximated by union of finitely many non-degenerate $2$-dimensional simplices, say $\cM_h=\cup_{K\in \cK}\{K\}$ with $\cK$ the collocation of all the simplices. Then we use $\mL^2(\cM_h)$ and $\mH^1(\cM_h)$ to denote the discrete counterparts of $\mL^2(\cM)$ and $\mH^1(\cM)$, respectively. We let $\cP(\cM_h)$ denote the space consisting of piecewise linear polynomials on $\cM_h$ and $V_h:=\cP(\cM_h)\cap \mH^1(\cM_h)$.  Denote by $h_K$ the diameter of $K\in\cK$ and suppose that $h=\max_{K\in\mathcal{K}}h_K$ is small enough, then for any $\vec{x}\in \cM_h$ there exists a unique point $\theta(\vec{x})\in \cM$ such that 
\begin{equation}\label{eq-lift}
	\vec{x}=\theta(\vec{x})+d(\vec{x})\vnu(\theta(\vec{x}))
\end{equation}
with $\vnu$ the  unit outward normal vector of $\cM$ and $d(x)$ the oriented distance function. We refer $\theta(\vec{x})$ the lift of $\vec{x}$ onto $\cM$. The $C^3$-smoothness of $\cM$ implies that $\theta(\vec{x})\in C^2(\cM)$, which shall be used in Section \ref{Sec-4}.  Moreover, for any function $\xi$  defined on $\cM_h$, we denote $\xi^\ell=\xi\circ \theta^{-1}$, and for any $\bar{\xi}$ defined on $\cM$ we denote $\bar{\xi}^{-\ell}=\bar{\xi}\circ \theta$. 

Our approach can be split into two steps. We first approximate $\cLg^{-\alpha}$ by $\cLgh^{-\alpha}$ with $\cLgh$ the parametric finite element approximation of $\cLg$ satisfying for any $w_h, v_h\in V_h$
\begin{equation}\label{paraFEM-1}
	\int_{\cM_h}\cLgh w_h\,v_h d\cM_h=\int_{\cM_h}a^{-\ell}(\vec{x})\ngh w_h(\ngh v_h)^T+b^{-\ell}(\vec{x}) w_h v_h d\cM_h,
\end{equation}
where we implicitly assumed that the integral can be evaluated with desired accuracy. In fact, if $a(\vec{x})$ and $ b(\vec{x})$ possess more regularity one can adopt the following scheme:
\begin{equation}\label{paraFEM-2}
	\int_{\cM_h}\cLgh w_h\,v_h d\cM_h=\int_{\cM_h}a_h(\vec{x})\ngh w_h(\ngh v_h)^T+b_h(\vec{x}) w_h v_h d\cM_h,
\end{equation}
where $a_h=I_h (a^{-\ell})$, $b_h=I_h (b^{-\ell})$ with $I_h$ the piecewise  linear  interpolation operator on $\cM_h$. Compared with \eqref{paraFEM-1}, scheme \eqref{paraFEM-2} is more practical since it only needs the values of $a(\vec{x})$ and $b(\vec{x})$ at the vertexes of the mesh. Furthermore, the mass and stiffness matrices can be obtained accurately by simple calculus.  We shall prove that for given $f\in\dot{\mH}^\delta(\cM)$, if $a(\vec{x}),b(\vec{x})\in C^{\kappa}(\cM)$ with $\kappa=\min\{2\alpha+\delta,2\}$, then \eqref{paraFEM-2} can achieve the same convergence rate as \eqref{paraFEM-1}. Here $C^{\kappa}(\cM):=[C^0(\cM),C^2(\cM)]_{\kappa/2,\infty}$ is the real interpolation space between $C^0(\cM)$ and $C^2(\cM)$.

Different from  Euclidean cases, the approximation of $\cLg$ by $\cLgh$ involves both approximation error and geometric error. For $\cLgh$ we introduce its eigenvalues and eigenfunctions $\{\lhj,\phj\}_{j=1}^{D}$ and order them such that $\lambda_{j-1,h}\le \lhj$. Denote $\lambda_{h,min}=\min_j\{\lambda_{j,h}\}$ and  $\lambda_{h,max}=\max_j\{\lambda_{j,h}\}$. To avoid evaluating $\cLgh^{-\alpha}$ directly by eigenvalue-eigenfunction decomposition we reformulate it into
\begin{equation}\label{prod-L}
\begin{aligned}
	\cLgh^{-\alpha}&=\frac{1}{\tlambda^\alpha}\prod_{k=0}^{L}\left[(\tlambda\cI+t_{k+1}\cBh)(\tlambda\cI+t_k\cBh)^{-1}\right]^{-\alpha}\\
	&=\frac{1}{\tlambda^\alpha}\prod_{k=0}^{L}\left[1+\tau_k\cBh(\tlambda\cI+t_k\cBh)^{-1}\right]^{-\alpha}
\end{aligned}
\end{equation}
where $0=t_0<t_1<\cdots<t_{L}<t_{L+1}=1$, $\tau_k=t_{k+1}-t_k$, $\tlambda\in (0,\lambda_{h,min}]$ and $\cBh=\cLgh-\tlambda\cI$. We then approximate each term appeared in the product of \eqref{prod-L} by rational functions, say, using the following approximation
$$\left[1+\tau_k\cBh(\tlambda\cI+t_k\cBh)^{-1}\right]^{-\alpha}\approx r_m(\tau_k\cBh(\tlambda\cI+t_k\cBh)^{-1}),$$ 
which reduces the evaluation of $\cLgh^{-\alpha}f_h$ to solving $m(L+1)$ elliptic problems, with $f_h\in V_h$ a proper approximation of $f$. The reason we use the formulation \eqref{prod-L} rather than approximate 	$\cLgh^{-\alpha}$ directly is that the function $z^{-\alpha}$ is singular at $z_0=0$ and compared with the length of the target interval $[\lambda_{h,min},\lambda_{h,max}]$, $z_0$ locates too close to the left of the interval which can lead to bad approximation. One can understand this by normalizing the interval $[\lambda_{h,min},\lambda_{h,max}]$ to $[0,1]$. Different from the approaches obtained by integral formulas which are all of $ (n-1,n)$-type rational approximants, our scheme is of type $(n,n)$. In fact, the formulation \eqref{prod-L} is inspired by \cite{Vabishchevich14,Vabishchevich2015JCP,DLP-Pade}. In \cite{DLP-Pade} we chose Pad\'e approximation of $(1+t)^{-\alpha}$ as $r_m(t)$ and proved the optimal convergence order. In fact, the scheme in \cite{DLP-Pade} is a scheme of $h$-$p$ version and what we proved in that paper is in $h$-direction. In this paper we shall present the error in $p$-direction  and instead of Euclidean space, we shall focus our attention on manifolds.

\subsection{Our contributions}

\begin{itemize}
	\item 	In Section \ref{Sec-2} we establish sharp error bound for diagonal Pad\'e approximation of the real scalar function $(1+t)^{-\alpha}$: 
	\begin{equation*}
		0<r_m(t)-(1+t)^{-\alpha}\le  {c}'_{\alpha}\frac{2^{-4m}t^{2m+1}}{2^{mt}} ,\quad t\in[0,1]
	\end{equation*}
	with $r_m$ the $(m,m)$-type Pad\'e approximant and  $c'_\alpha\approx\frac{\alpha\pi}{2\Gamma(1-\alpha)\Gamma(1+\alpha)}$. Here $c_1(\alpha)\approx c_2(\alpha)$ means $\frac{c_1(\alpha)}{c_2(\alpha)}=\mathcal{O}(1)$.
	\item Based on this result, we then prove in Section \ref{Sec-3} that under optimal choice of  $\{t_k\}_{k=0}^{L+1}$,
	\begin{equation*}
	\begin{aligned}
		&\Big\|\cLgh^{-\alpha} f_h -\frac{1}{\tlambda^\alpha}\prod_{k=0}^{L}r_m(\tau_k\cBh(\tlambda\cI+t_k\cBh)^{-1}) f_h\Big\|_{L^2(\cM_h)}\\
		 &\le \hat{c}\tlambda^{-\alpha}\, 32^{-\frac{N_s}{\lceil \log_2 (\lambda_{h,max}/\tlambda)\rceil}}\|f_h\|_{L^2({\cM_h})}
	\end{aligned}
	\end{equation*}
where $N_s$ is the number of total solves, $\hat{c}\,\approx \frac{(\alpha+2)2^{\alpha-1}\pi}{\Gamma(1-\alpha)\Gamma(1+\alpha)}$. Note $\hat{c}$ is bounded with respect to $\alpha$, which  implies our scheme is robust for $\alpha\in (0,1)$.
	\item In Section \ref{Sec-4} we give the error between $\cLg^{-\alpha} f$ and $\cLgh^{-\alpha}f_h$ which includes the approximation error and geometric error: for $f\in \dot{\mH}^{\delta}(\cM)$
	\begin{equation*}
	\|u-u_h^\ell\|_{L^2(\cM)}\le 
	\left\{
	\begin{matrix}
		c |2\alpha+\delta-2|^{-1}h^{\min(\delta+2\alpha,2)}\|f\|_{\dot{H}^{\delta}(\cM)}& \quad \alpha+\delta/2\neq 1,\\
		c \ln(h^{-1})h^{2}\|f\|_{\dot{H}^{\delta}(\cM)}& \quad \alpha+\delta/2= 1,
	\end{matrix}
	\right.
\end{equation*}
where $c$ is independent of $h$ and is bounded for $\alpha\rightarrow 0^+$ and $\alpha \rightarrow 1^-$. Our  estimate for $\alpha+\delta/2\neq 1$ is sharper than the bound given in \cite[Thereom 4.2]{bonito2021approximation}. In their estimate, $\ln(h^{-1})$ is involved. 
\end{itemize}
Compared with  Bonito-Pasciak scheme, both the cost and error bound are uniform with respect to $\alpha\in (0,1)$. In fact, As a function of total solves $N_s$, the error of Bonito-Pasciak scheme is of $\mathcal{O}(e^{-\pi\sqrt{\alpha(1-\alpha)N_s}})$,  say, it is root-exponentially convergent with respect to $N_s$,  while ours is $\mathcal{O}(32^{-N_s/\ln\lambda_{h,max}})$. Besides, one may also note that Bonito-Pasciak scheme is $\alpha$-dependent, and when $\alpha$ is taken close to  $0$ or $1$, the error will decay much slowly.  We also would like to mention the Jacobi-Gauss quadrature scheme proposed in \cite{aceto2019rational}, which yields an  $\mathcal{O}(\rho^{-N_s/\lambda_{h,\max}^{1/4}})$ convergency. Note that $\lambda_{h,\max}=\mathcal{O}(h^{-2})$, so for small enough $h$, $\ln\lambda_{h,max} \ll \lambda_{h,max}^{1/4}$ which implies our scheme converges faster. 


\section{Pad\'e approximation of $(1+t)^{-\alpha}$}\label{Sec-2}
Pad\'e approximation is an old topic in approximation theory. Given a function $Z(t)$, the goal of Pad\'e approximation is to find  a rational function of the form $r=P_m(t)/Q_n(t)$ as the approximant of  $Z(t)$ with $P_m,Q_n$ polynomials of order no more than $m$ and $n$ respectively, such that the Taylor expansions of $Z(t)$ and $r(t)$ have the same coefficients up to $t^{m+n}$. 

In this section, we develop diagonal Pad\'e
approximations to  $(1+t)^{-\alpha}$ for $\alpha\in (0,1)$ based on the
classical theory of Pad\'e approximations given by Baker \cite{Baker1975}.  Our approximants are of the form
\begin{equation}\label{pade}
	(1+t)^{-\alpha}\approx r_m(t):=\frac{P_m(t)}{Q_m(t)}, \quad t\in [0,1]
\end{equation}
with $m=1,2,\ldots$ and normalized condition $Q_m(0)=1$. $r_m$ is determined by the Maclaurin expansion of $(1+t)^{-\alpha}$. 

We shall write down explicit formulas for the polynomials $P_m(t)$ and $Q_m(t)$ associated with orthogonal polynomials. The starting point is
\begin{equation}\label{taylor-alpha}
	(1+t)^{-\alpha} =1+\sum_{j=1}^{\infty}\frac{(-\alpha)(-\alpha-1)\cdots (-\alpha-j+1)}{j!}t^j= \Fto (\alpha,1;1;-t).
\end{equation}
Here $\Fto (a,b;c;t)$ denotes the hypergeometric function defined by
$$\Fto (a,b;c;t)=\sum_{j=0}^\infty \frac {(a)_j (b)_j} {j! (c)_j} t^j$$
with  $(a)_0=1$, $(a)_j=a(a+1)\cdots(a+j-1)$ for $j>0$. Clearly, \eqref{taylor-alpha} implies 
\begin{equation}\label{taylor-alpha1}
	(1-t)^{-\alpha} = \Fto (\alpha,1;1;t):=\sum_{i=0}^\infty c_i t^i,
\end{equation}
where
\begin{equation}\label{ci}
	\begin{aligned}
		c_i &= \frac {(\alpha)_i}{i!}=\frac 1 {\Gamma(\alpha)\Gamma(1-\alpha)} \int_0^1 
		(1-t)^{-\alpha} t^{\alpha+i-1} dt\\
		&=\frac 1 {\Gamma(\alpha)\Gamma(1-\alpha)} \int_0^1 t^{i-1}
		w^{-\alpha,\alpha}(t)\, dt
	\end{aligned}
\end{equation}
with $w^{\beta,\gamma}(t):= (1-t)^\beta t^\gamma$.

\begin{lemma}\label{lem-PQ}
	Let $\{p_i\}$ and $\{q_i\}$ denote two families of monic polynomials which are mutually orthogonal with respect to the weight $w^{\alpha,-\alpha}(t)$ and $w^{-\alpha,\alpha}(t)$ respectively. Then 
	\begin{equation*}
		P_m(-t)=t^mp_m(1/t), \qquad Q_m(-t)=t^mq_m(1/t).
	\end{equation*} 
\end{lemma}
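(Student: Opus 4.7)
The plan is to translate the Padé-defining equations into a standard orthogonality condition satisfied by the reverse polynomial of $Q_m$, and then obtain the analogous statement for $P_m$ by exploiting the symmetry between $(1+t)^{-\alpha}$ and $(1+t)^{\alpha}$.

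First, I substitute $t\mapsto -t$ and set $\tilde P_m(t)=P_m(-t)$, $\tilde Q_m(t)=Q_m(-t)$. The Padé condition \eqref{pade} becomes
$$(1-t)^{-\alpha}\,\tilde Q_m(t)-\tilde P_m(t)=O(t^{2m+1}),$$
which, upon writing $\tilde Q_m(t)=\sum_{j=0}^m a_j t^j$ with $a_0=1$ and comparing the coefficients of $t^k$ for $k=m+1,\dots,2m$ against \eqref{taylor-alpha1}, yields the linear system $\sum_{j=0}^m a_j c_{k-j}=0$ for $k=m+1,\dots,2m$. Every index $k-j$ here satisfies $k-j\ge 1$, so I substitute the integral representation \eqref{ci}, exchange the finite sum with the integral, and factor out $t^{k-m-1}$. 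The system rewrites as
$$\int_0^1 t^{k-m-1}\,\tilde Q_m^{*}(t)\,w^{-\alpha,\alpha}(t)\,dt=0,\qquad k=m+1,\dots,2m,$$
where $\tilde Q_m^{*}(t):=t^m\tilde Q_m(1/t)=\sum_{j=0}^m a_j t^{m-j}$ is the reverse polynomial, still monic (leading coefficient $a_0=1$) of degree $m$. This is precisely the defining orthogonality of the monic orthogonal polynomial $q_m$ with respect to $w^{-\alpha,\alpha}$; by uniqueness, $\tilde Q_m^{*}=q_m$. Substituting $t\mapsto 1/t$ and multiplying by $t^m$ gives $Q_m(-t)=t^m q_m(1/t)$, with the normalization $Q_m(0)=1$ automatic since $q_m$ is monic.

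For the $P_m$ identity, I invoke the standard reciprocal property of Padé approximants: multiplying \eqref{pade} by $(1+t)^\alpha$ yields $(1+t)^\alpha P_m(t)-Q_m(t)=O(t^{2m+1})$, and evaluating at $t=0$ forces $P_m(0)=Q_m(0)=1$. Hence $Q_m/P_m$ is the $(m,m)$ diagonal Padé approximant of $(1+t)^\alpha$, correctly normalized. Now I reapply the first step with $\alpha$ replaced by $-\alpha\in(-1,0)$: the roles of numerator and denominator swap, and the weight $w^{-\alpha,\alpha}$ turns into $w^{\alpha,-\alpha}$. Consequently, the denominator of the Padé approximant of $(1+t)^\alpha$, which is $P_m$, satisfies $P_m(-t)=t^m p_m(1/t)$ with $p_m$ monic orthogonal with respect to $w^{\alpha,-\alpha}$, as claimed.

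The main technical point to double-check is that the integral representation \eqref{ci} remains valid under the replacement $\alpha\to -\alpha$: only coefficients indexed by $i\ge 1$ enter the Padé system, so the integrand $t^{i-1}w^{\alpha,-\alpha}(t)=t^{i-1-\alpha}(1-t)^\alpha$ with $\alpha\in(0,1)$ is integrable on $[0,1]$, and the identity $c_i=(-\alpha)_i/i!$ still follows from the beta-function identity. Apart from this verification, both steps are purely algebraic and should go through without additional obstacles.
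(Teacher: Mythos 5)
Your proof is correct, and it rests on the same underlying fact as the paper's: the Maclaurin coefficients $c_i$ of $(1\mp t)^{\mp\alpha}$ are (up to a fixed nonzero constant) the moments $\int_0^1 t^{i-1}w^{\mp\alpha,\pm\alpha}(t)\,dt$, so the reversed Pad\'e denominator must be the corresponding monic orthogonal polynomial. The execution differs, though, in a way worth noting. The paper reaches the conclusion by writing down two closed-form determinants --- Heine's formula for $q_m$ in terms of the moments and C.~Jacobi's formula for the Pad\'e denominator in terms of the $c_i$ --- and observing they coincide after normalization; it then invokes Baker's Theorem~9.2 (the reciprocal property) to transfer the result to $P_m$. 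You instead work directly with the linear system $\sum_j a_j c_{k-j}=0$, $k=m+1,\dots,2m$, convert it via the integral representation into the defining orthogonality of the reverse polynomial $t^m\tilde Q_m(1/t)$ against $1,t,\dots,t^{m-1}$, and conclude by uniqueness of the monic orthogonal polynomial; you also derive the reciprocal property by hand from $P_m(0)=Q_m(0)=1$ rather than citing it. Your route is more self-contained and arguably cleaner (it avoids both determinant identities and the external citation), at the cost of having to note explicitly that the reversed polynomial is still monic of degree $m$ because $a_0=1$ and that both weights are positive and integrable so the monic orthogonal polynomials are unique --- points you handle correctly. The one detail you flag yourself, the validity of the integral representation of the coefficients of $(1-t)^{\alpha}$ with the (negative) constant $1/(\Gamma(-\alpha)\Gamma(1+\alpha))$, indeed checks out via the beta integral and is harmless since only the vanishing of the moments matters.
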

\begin{proof}
	Denote $w(t)=\frac{w^{-\alpha,\alpha}(t)}{\Gamma(\alpha)\Gamma(1-\alpha)}$. Utilizing the orthogonality one can get 
	\begin{equation*}
		q_m(t)=(c_m^\alpha)^{-1} \,{\det \left| 
			\begin{matrix}
				\int_{0}^{1}wdt  &  \int_{0}^{1}twdt& \cdots & \int_{0}^{1}t^mwdt\\
				\int_{0}^{1}twdt &  \int_{0}^{1}t^2wdt& \cdots & \int_{0}^{1}t^{m+1}wdt\\
				\vdots           &   \cdots       &  \cdots      &\vdots \\
				\int_{0}^{1}t^{m-1}wdt &  \int_{0}^{1}t^mwdt& \cdots & \int_{0}^{1}t^{2m-1}wdt\\
				1                      & t                  &\cdots  & t^m
			\end{matrix}
			\right|}
	\end{equation*}
	with 
	\begin{equation*}
		c_m^\alpha={\det\left|  
			\begin{matrix}
				\int_{0}^{1}wdt  &  \int_{0}^{1}twdt& \cdots & \int_{0}^{1}t^{m-1}wdt\\
				\int_{0}^{1}twdt &  \int_{0}^{1}t^2wdt& \cdots & \int_{0}^{1}t^{m}wdt\\
				\vdots           &   \cdots       &  \cdots      &\vdots \\
				\int_{0}^{1}t^{m-1}wdt &  \int_{0}^{1}t^mwdt& \cdots & \int_{0}^{1}t^{2m-2}wdt
			\end{matrix}			
			\right|}.
	\end{equation*}
	It is obvious that $\frac{P_m(-t)}{Q_m(-t)}$ is the Pad\'e approximation of $(1-t)^{-\alpha}$. 
	Thanks to C. Jacobi's result(see, e.g. \cite[(1.27)]{Baker1975}), we know the denominator satisfies
	\begin{equation*}
		Q_m(-t)=\tilde{c}_m^\alpha\det\left|
		\begin{matrix}
			c_1   & c_2       & \cdots & c_{m+1}\\
			\vdots&           &       & \vdots\\
			c_m   & c_{m+1}   &\cdots & c_{2m}\\
			t^m   & t^{m-1}   &\cdots  & 1
		\end{matrix}
		\right|
	\end{equation*}
	where $\tilde{c}_m^\alpha$  is the constant such that $Q(0)=1$, and $\{c_i\}_{i=1}^{2m}$ are the coefficients in \eqref{taylor-alpha1}. Comparing $q_m(t)$ with $Q_m(-t)$ and recalling that $q_m$ is monic, one can immediately obtain
	$$Q_m(-t)=t^mq_m(1/t).$$
	Theorem 9.2 of \cite{Baker1975} implies that $Q_m(-t)/P_m(-t)$ is the diagonal Pad\'e approximation to $(1-t)^\alpha$. So by considering $\{c_i\}_{i=1}^\infty$ as the Maclaurin expansion coefficients of $(1-t)^\alpha$ we get
	\begin{equation*}
		c_i=\frac{1}{\Gamma(-\alpha)\Gamma(1+\alpha)}\int_{0}^{1}t^{i-1}w^{\alpha,-\alpha}(t)dt.
	\end{equation*}
	Repeating the same arguments for $p_m(t)$ and $P_m(-t)$ gives $P_m(-t)=t^mp_m(1/t)$. Thus we end our proof.
\end{proof}

Let $J_m^{\beta,\gamma}(t)$ denote the Jacobi Polynomial on $[0,1]$ with $\beta,\gamma\in \bR$, given by
\begin{equation*}
	J_m^{\beta,\gamma}(t)=\frac{(\beta+1)_m}{m!}{}_2F_1(-m,m+\beta+\gamma+1;\beta+1;1-t).
\end{equation*}
Suppose $\{t_j(\beta,\gamma)\}_{j=1}^{m}$ are the roots of $J_m^{\beta,\gamma}(t)$ enumerated in increasing order, then Lemma \ref{lem-PQ} implies that
\begin{equation}\label{formulation-rm}
	r_m(t)=\prod_{i=1}^m
	\frac {1+t_i(
		\alpha,-\alpha)t} {1+t_i(
		-\alpha,\alpha)t}.
\end{equation}


To ensure the stability of our scheme, the following interlacing property is needed.

\begin{proposition}\label{interlace}
For  $\alpha\in (0,1)$ it holds
\begin{equation*}
	\begin{aligned}
		0&<t_1(\alpha,-\alpha)<t_1(-\alpha,\alpha)<t_2(\alpha,-\alpha)<t_2(-\alpha,\alpha)\\
		&<\cdots<t_j(\alpha,-\alpha)<t_j(-\alpha,\alpha)<\cdots<t_m(\alpha,-\alpha)<t_m(-\alpha,\alpha)<1.
	\end{aligned}
\end{equation*}
\end{proposition}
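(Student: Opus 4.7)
The plan is to identify the nodes $\{t_i(\alpha,-\alpha)\}$ and $\{t_i(-\alpha,\alpha)\}$ with zeros of two families of monic orthogonal polynomials whose weight ratio is strictly monotone on $(0,1)$, and then to invoke the classical interlacing theorem of Stieltjes.

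By Lemma~\ref{lem-PQ}, $\{t_i(\alpha,-\alpha)\}_{i=1}^{m}$ and $\{t_i(-\alpha,\alpha)\}_{i=1}^{m}$ are precisely the zeros of the $m$-th monic polynomials orthogonal on $(0,1)$ with respect to the weights $w_1(t):=w^{\alpha,-\alpha}(t)=(1-t)^{\alpha}t^{-\alpha}$ and $w_2(t):=w^{-\alpha,\alpha}(t)=(1-t)^{-\alpha}t^{\alpha}$. For $\alpha\in(0,1)$ both weights are positive and integrable on $(0,1)$, so the extremal strict inequalities $0<t_1(\alpha,-\alpha)$ and $t_m(-\alpha,\alpha)<1$ follow at once from the classical fact that the zeros of an orthogonal polynomial lie in the open interior of the support. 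The key observation to attack the interior interlacing is that the ratio $w_2(t)/w_1(t)=(t/(1-t))^{2\alpha}$ is strictly increasing on $(0,1)$.

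The easier half of the interior interlacing, $t_i(\alpha,-\alpha)<t_i(-\alpha,\alpha)$, I would obtain from Markov's monotonicity theorem applied to the one-parameter family $w_\tau(t):=(1-t)^{\alpha(1-2\tau)}t^{-\alpha(1-2\tau)}$, $\tau\in[0,1]$, linking $w_1$ at $\tau=0$ to $w_2$ at $\tau=1$. Since $\partial_\tau\log w_\tau(t)=2\alpha\log(t/(1-t))$ is strictly increasing in $t$, Markov's theorem yields that every zero of the $m$-th orthogonal polynomial with respect to $w_\tau$ is a strictly increasing function of $\tau$; specializing at $\tau=0,1$ gives the desired inequality.

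The main obstacle is the other half, $t_i(-\alpha,\alpha)<t_{i+1}(\alpha,-\alpha)$, which is \emph{not} a consequence of Markov's monotonicity alone because that theorem only tracks each zero separately as the parameter varies. The clean route is to appeal to the stronger interlacing theorem of Stieltjes (see Szeg\H{o}, \emph{Orthogonal Polynomials}, Thm.~6.12.2): when two positive weights on an interval have strictly monotone ratio, the $m$ zeros of the corresponding $m$-th monic orthogonal polynomials strictly interlace in the pattern asserted by the proposition. A self-contained proof proceeds by contradiction: assume an open interval bounded by two consecutive zeros of $p_m$ contains at least two zeros of $q_m$; construct a test polynomial $\pi$ of degree $\leq 2m-1$ vanishing at all zeros of $p_m$ except one and at all but one zero of $q_m$; then evaluate $\int_0^1\pi\,w_1\,dt$ and $\int_0^1\pi\,w_2\,dt$ via the $m$-point Gaussian quadratures associated to $w_1$ and $w_2$ respectively. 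The strict monotonicity of $w_2/w_1$, together with orthogonality of $p_m$ and $q_m$, forces incompatible signs for these two integrals. The delicate technical point is the correct choice of $\pi$ and the careful tracking of signs in the residual Gauss quadrature terms; everything else reduces to bookkeeping.
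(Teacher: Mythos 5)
The first half of your argument is sound and essentially matches the paper: Lemma \ref{lem-PQ} identifies the two families of nodes with the zeros of the monic orthogonal polynomials for the weights $w^{\alpha,-\alpha}$ and $w^{-\alpha,\alpha}$, and A.~Markov's monotonicity theorem (the paper invokes it as \cite[Theorem 6.21.1]{szeg1939orthogonal}) gives $t_j(\alpha,-\alpha)<t_j(-\alpha,\alpha)$ together with the endpoint inequalities. The genuine gap is precisely in the half you flag as the main obstacle. There is no ``stronger interlacing theorem of Stieltjes'' of the kind you invoke: a strictly monotone ratio of two positive weights does \emph{not} force the $m$ zeros of the two $m$-th orthogonal polynomials to interlace. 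Szeg\H{o}'s Theorem 6.12.2 is again Markov's theorem (the differential form of the monotonicity you already used), and it controls each zero only separately. A concrete counterexample to the general principle you state: on $[0,1]$ take $w_1\equiv 1$ and $w_2(t)=e^{Kt}$, whose ratio is strictly increasing; as $K\to\infty$ the measure $w_2\,dt$ concentrates near $t=1$ and all $m$ zeros of $q_m$ move into an $O(1/K)$-neighbourhood of $1$, so eventually $y_1>x_m$ and the pattern $y_i<x_{i+1}$ fails. For the same reason your sketched quadrature argument cannot close as written: after writing $\int_0^1\pi w_2\,dt=\int_0^1\pi\rho\, w_1\,dt$ the factor $\rho=(t/(1-t))^{2\alpha}$ is not a polynomial, the $m$-point Gauss rule for $w_1$ no longer applies to the integrand, and no sign contradiction is forced without input specific to these two weights.

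The paper closes exactly this gap by an argument special to the Jacobi family. Using $J_m^{\beta,\gamma}(t)=(-1)^mJ_m^{\gamma,\beta}(1-t)$ and the degenerate factorizations $J_m^{-1,1}(t)=\frac{m+1}{m}(t-1)J_{m-1}^{1,1}(t)$ and $J_m^{1,-1}(t)=\frac{m+1}{m}tJ_{m-1}^{1,1}(t)$, together with Markov monotonicity in the parameter $\alpha$, it shows $t_j(\alpha,-\alpha)>t_j(1,-1)=y_{j-1}$ and $t_j(-\alpha,\alpha)<t_j(-1,1)=y_j$, where $y_1<\cdots<y_{m-1}$ are the zeros of $J_{m-1}^{1,1}$ and $y_0=0$, $y_m=1$. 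This traps each pair $t_j(\alpha,-\alpha)<t_j(-\alpha,\alpha)$ inside $(y_{j-1},y_j)$, which yields the full interlacing. If you want to salvage your route, you must exploit something beyond monotonicity of the weight ratio --- for instance the reflection symmetry $t_i(-\alpha,\alpha)=1-t_{m+1-i}(\alpha,-\alpha)$ coming from $w^{-\alpha,\alpha}(t)=w^{\alpha,-\alpha}(1-t)$ --- rather than a general two-weight comparison theorem.
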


\begin{proof} 

	Thanks to \cite[4.1.3]{szeg1939orthogonal} we obtain 
	\begin{equation}\label{tran-J}
		J_m^{\beta,\gamma}(t)=(-1)^mJ_m^{\gamma,\beta}(1-t).
	\end{equation}
	By \cite[4.22.2]{szeg1939orthogonal} we have
	\begin{equation}\label{J1}
		J_m^{-1,1}(t)=\frac{m+1}{m}(t-1)J_{m-1}^{1,1}(t).
	\end{equation}
	Utilizing \eqref{tran-J} and \eqref{J1} we have
	\begin{equation*}
		\begin{aligned}
			J_m^{1,-1}(t)=(-1)^mJ_m^{-1,1}(1-t)&=\frac{m+1}{m}(-1)^m(-t)J_{m-1}^{1,1}(1-t)\\
			&=\frac{m+1}{m}(-1)^m(-t)(-1)^{m-1}J_{m-1}^{1,1}(t)
		\end{aligned}
	\end{equation*}
	which yields
	\begin{equation}\label{J2}
		J_m^{1,-1}(t)=\frac{m+1}{m}tJ_{m-1}^{1,1}(t).
	\end{equation}
	A. Markov's theorem (see, e.g., Theorem 6.21.1 of  
	\cite{szeg1939orthogonal}) immediately implies that
	$$\frac {\partial t_j(\beta,\gamma)}{\partial \beta} <0 \quad \hbox{ and }
	\quad 
	\frac {\partial t_j(\beta,\gamma)}{\partial \gamma} >0, \quad \mbox{for }\beta,\gamma>-1.$$
	So it follows that 
	\begin{equation}\label{taa}
		t_j(\alpha,-\alpha)<t_j(\alpha,\alpha)<t_j(-\alpha,\alpha)
	\end{equation}
	and for $\beta,\gamma\in (-1,1)$ with $\beta<\gamma$,
	\begin{equation}\label{mark}
		t_j(\beta,-\beta) > t_j(\gamma,-\gamma), \quad t_j(-\beta,\beta)< t_j(-\gamma,\gamma).
	\end{equation}
	Let $y_1 <y_2 <\cdots,<y_{m-1} $ represent the roots of $J_{m-1}^{1,1}(t)$ and denote $y_0=0$, $y_m=1$. Then 	appealing to \eqref{J1} and \eqref{J2} it follows
	\begin{equation}\label{tv1}
		t_j(\alpha,-\alpha)> \lim_{\alpha\rightarrow 1} 
		t_j(\alpha,-\alpha):=t_j(1,-1)=y_{j-1}
	\end{equation}
	and
	\begin{equation}\label{tv2}
		t_j(-\alpha,\alpha)< \lim_{\alpha\rightarrow 1} 
		t_j(-\alpha,\alpha):=t_j(-1,1)=y_{j}.
	\end{equation}
	Thus by \eqref{taa}, \eqref{tv1} and \eqref{tv2} we obtain
	\begin{equation*}
		\begin{aligned}
			0&<t_1(\alpha,-\alpha)<t_1(-\alpha,\alpha)<y_1<t_2(\alpha,-\alpha)<t_2(-\alpha,\alpha)<y_2\\
			&<\cdots<t_j(\alpha,-\alpha)<t_j(-\alpha,\alpha)<y_j<\cdots<t_m(\alpha,-\alpha)<t_m(-\alpha,\alpha)<1
		\end{aligned}
	\end{equation*}
which ends our proof.
\end{proof}

\begin{proposition} The rational function
	$r_m(t)$ is monotonically decreasing for $t\in [0,\infty)$.
\end{proposition}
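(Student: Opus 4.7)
The plan is to exploit the factored form \eqref{formulation-rm} established earlier together with the interlacing property from Proposition \ref{interlace}. Writing
\[
r_m(t) = \prod_{i=1}^m f_i(t), \qquad f_i(t) := \frac{1 + t_i(\alpha,-\alpha)\,t}{1 + t_i(-\alpha,\alpha)\,t},
\]
I would reduce monotonicity of $r_m$ on $[0,\infty)$ to monotonicity of each factor $f_i$ (plus positivity, so that the product rule gives the correct sign).

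First, I would verify that each $f_i$ is strictly decreasing on $[0,\infty)$. Denoting $a_i := t_i(\alpha,-\alpha)$ and $b_i := t_i(-\alpha,\alpha)$, a direct computation gives
\[
f_i'(t) = \frac{a_i - b_i}{(1 + b_i t)^2}.
\]
By Proposition \ref{interlace} we have $0 < a_i < b_i$, so $f_i'(t) < 0$ for all $t \geq 0$. Moreover, since $a_i, b_i > 0$, both the numerator and denominator of $f_i$ are $\ge 1$ on $[0,\infty)$, hence $f_i(t) > 0$ throughout.

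Next, I would combine these facts via the product rule: for positive, differentiable, strictly decreasing functions $f_1,\dots,f_m$,
\[
\bigl(\textstyle\prod_i f_i\bigr)'(t) = \sum_{j=1}^m f_j'(t) \prod_{i \neq j} f_i(t) < 0,
\]
since each summand has a negative $f_j'(t)$ multiplied by a positive product. This yields $r_m'(t) < 0$ on $[0,\infty)$, which is the claim.

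I do not anticipate any real obstacle: the work has already been done in establishing \eqref{formulation-rm} and the strict interlacing $a_i < b_i$ in Proposition \ref{interlace}. The only thing to be careful about is ensuring positivity of each $f_i$ (so that the product-rule sign argument is valid), which is immediate from $a_i, b_i > 0$ and $t \geq 0$. No endpoint or limit arguments at $t \to \infty$ are needed, since strict negativity of $r_m'$ on $[0,\infty)$ already implies strict monotonic decrease.
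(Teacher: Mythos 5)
Your proof is correct and follows essentially the same route as the paper: the paper likewise invokes the factorization \eqref{formulation-rm} and the interlacing $t_i(\alpha,-\alpha)<t_i(-\alpha,\alpha)$ from Proposition \ref{interlace} to conclude that each factor, and hence the product, is decreasing. You merely spell out the derivative computation and the positivity needed for the product-rule sign argument, which the paper leaves implicit.
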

\begin{proof} 
	It follows by the proposition above that each term in the product of  \eqref{formulation-rm} is monotonically decreasing and so 	must the product.
\end{proof}

\begin{theorem}[Approximation Property]\label{Pro-est}
	Let $r_m(t)$ denotes the Pad\'e approximation of
	$(1+t)^{-\alpha}$, then we have
	$1=r_0(t)>r_1(t)>r_2(t)>\cdots>r_m(t)>\cdots>(1+t)^{-\alpha}$ for $t \in [0,\infty)$. Furthermore,
	for $t\in [0,1],$
	\begin{equation}\label{main-1}
		r_m(t)-(1+t)^{-\alpha}\le   {c}'_{\alpha}\frac{2^{-4m}t^{2m+1}}{2^{mt}}
	\end{equation}
	where $c'_\alpha\approx\frac{\alpha\pi}{2\Gamma(1-\alpha)\Gamma(1+\alpha)}$  for $m$ big enough.
\end{theorem}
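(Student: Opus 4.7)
My plan hinges on the Stieltjes/Markov-function representation
\begin{equation*}
(1+t)^{-\alpha} = \frac{1}{\Gamma(\alpha)\Gamma(1-\alpha)}\int_0^1 \frac{s^{\alpha-1}(1-s)^{-\alpha}}{1+st}\,ds,
\end{equation*}
obtained from the Beta integral after expanding $1/(1+st)$ as a geometric series and integrating term-by-term against the weight; this recovers precisely the Maclaurin series \eqref{taylor-alpha}. For the monotonicity chain $1=r_0>r_1>\cdots>(1+t)^{-\alpha}$ on $[0,\infty)$, I would invoke the classical Stieltjes-function theory of Pad\'e approximants (\cite{Baker1975}, Chapter~15): the diagonal $[m/m]$-Pad\'e approximants of such Stieltjes functions are positive rationals that decrease monotonically to the target on $(0,\infty)$. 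Positivity of $Q_m$ on $[0,\infty)$, and hence finiteness of each $r_m$, is supplied by the interlacing of Proposition~\ref{interlace}.

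For the quantitative bound I would exploit the decomposition $(1+t)^{-\alpha}=1-t\,\tilde f(t)$, with $\tilde f(t)=\int_0^1 s^{\alpha}(1-s)^{-\alpha}\,ds/[(1+st)\Gamma(\alpha)\Gamma(1-\alpha)]$, a Stieltjes transform of the shifted measure $d\mu(s)=s^\alpha(1-s)^{-\alpha}\,ds/[\Gamma(\alpha)\Gamma(1-\alpha)]$. A short Taylor-matching check gives $r_m(t)=1-t\,\tilde r_{m-1}(t)$, where $\tilde r_{m-1}$ is the $(m-1,m)$-Pad\'e of $\tilde f$; and by a standard identity this $(m-1,m)$-Pad\'e coincides with the $m$-point Gauss quadrature for $\tilde f$, whose nodes $s_j=t_j(-\alpha,\alpha)$ are the zeros of the monic Jacobi polynomial $q_m$ from Lemma~\ref{lem-PQ}. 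The usual Gauss-kernel identity $\int d\mu(s)/(s-z)-\sum\omega_j/(s_j-z)=q_m(z)^{-2}\int q_m(s)^2\,d\mu(s)/(s-z)$, specialized at $z=-1/t$, then yields
\begin{equation*}
r_m(t)-(1+t)^{-\alpha}=\frac{t^{2m+1}}{Q_m(t)^2}\int_0^1 \frac{q_m(s)^2}{1+st}\,d\mu(s),
\end{equation*}
with $Q_m(t)=\prod_{j=1}^m(1+s_j t)$; the right-hand side is manifestly non-negative.

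It remains to estimate the two factors sharply. The integral is bounded by the exact Jacobi norm
$\|q_m\|_\mu^2=\Gamma(m+1-\alpha)\Gamma(m+1+\alpha)(m!)^2/\{(2m+1)[(2m)!]^2\,\Gamma(\alpha)\Gamma(1-\alpha)\}$. Using the Euler product identity $\Gamma(m+1-\alpha)\Gamma(m+1+\alpha)=(m!)^2\,\Gamma(1-\alpha)\Gamma(1+\alpha)\prod_{k=1}^m(1-\alpha^2/k^2)$, Stirling's formula $(m!)^4/[(2m)!]^2\sim\pi m/16^m$, and the Wallis-type limit $\prod_{k=1}^\infty(1-\alpha^2/k^2)=\sin(\pi\alpha)/(\pi\alpha)$, this reduces for $m$ large enough to $\|q_m\|_\mu^2\le c'_\alpha\,2^{-4m}$ with $c'_\alpha\approx \alpha\pi/[2\Gamma(1-\alpha)\Gamma(1+\alpha)]$ (equivalently $\sin(\pi\alpha)/2$). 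For the denominator, two ingredients combine: (i) the lower bound $\sum_{j=1}^m s_j>m/2$, which follows from the symmetry \eqref{tran-J} yielding $\sum_j t_j(-\alpha,\alpha)+\sum_j t_j(\alpha,-\alpha)=m$, together with the strict inequalities $t_j(\alpha,-\alpha)<t_j(-\alpha,\alpha)$ of Proposition~\ref{interlace}; and (ii) the elementary concavity inequality $\log(1+x)\ge x\log 2$ on $[0,1]$ (the function $x\mapsto\log(1+x)-x\log 2$ is concave and vanishes at $x=0,1$). Together these give $Q_m(t)^2=\prod_j(1+s_j t)^2\ge 2^{mt}$ on $[0,1]$, and the stated bound follows by direct multiplication. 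The main obstacle is the clean identification of $r_m$ with a Gauss quadrature of the shifted measure $d\mu$ — this is what decouples the $2^{-4m}$ factor (from the Jacobi norm) and the $2^{-mt}$ factor (from the product $Q_m^2$) — requiring careful normalization bookkeeping through Lemma~\ref{lem-PQ}.
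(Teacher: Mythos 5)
Your proposal is correct, but it takes a genuinely different route from the paper's. The paper argues by telescoping: it computes the coefficient $\eta_n$ in $P_nQ_{n+1}-P_{n+1}Q_n=\eta_n t^{2n+1}$ explicitly from the hypergeometric representations \eqref{num}--\eqref{den} (positivity of $\eta_n$ gives the monotone chain, Stirling gives $\eta_n\le c'_\alpha 2^{-4n}$), writes $r_m(t)-(1+t)^{-\alpha}=\sum_{n\ge m}\eta_n t^{2n+1}/(Q_n(t)Q_{n+1}(t))$, and bounds $Q_nQ_{n+1}\ge 2^{nt}$ by comparing the roots $t_j(-\alpha,\alpha)$ with Legendre roots via A.~Markov's theorem, Szeg\H{o}'s root inequalities, and the identity $\sum_{i=1}^n\cos\frac{2i\pi}{2n+1}=-\frac12$; a residual factor $\frac{32}{31}$ from summing the geometric series is then discarded. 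You instead derive the exact one-term error representation $r_m(t)-(1+t)^{-\alpha}=t^{2m+1}Q_m(t)^{-2}\int_0^1 q_m(s)^2(1+st)^{-1}\,d\mu(s)$ from the Markov-function/Gauss-quadrature identity. This makes $r_m>(1+t)^{-\alpha}$ immediate, replaces the asymptotics of $\eta_n$ by the closed-form Jacobi norm $\|q_m\|_\mu^2$ (which indeed has the same $2^{-4m}$ decay and the same constant, $c'_\alpha=\sin(\pi\alpha)/2$), and replaces the Legendre-root machinery by the cleaner observation $\sum_j t_j(-\alpha,\alpha)>m/2$, which follows from the symmetry \eqref{tran-J} (giving $\sum_j t_j(-\alpha,\alpha)+\sum_j t_j(\alpha,-\alpha)=m$) together with Proposition \ref{interlace}. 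What your approach buys: no telescoping, no $\frac{32}{31}$ slack, an exact rather than asymptotic identification of the leading constant, and a more elementary denominator bound. What it costs: the strict chain $r_0>r_1>\cdots$ is delegated to the cited Stieltjes theory rather than proved (the paper's $\eta_n>0$ settles it in two lines once the coefficients are in hand), and the identification of $(1-r_m(t))/t$ with the $[m-1/m]$ Pad\'e approximant, i.e.\ the $m$-point Gauss rule for $\tilde f$, must be verified --- it does go through, since $Q_m-P_m$ vanishes at $t=0$, so $(1-r_m)/t$ is of type $(m-1,m)$ and matches $\tilde f$ to order $t^{2m-1}$, and the nodes $t_j(-\alpha,\alpha)$ are precisely the zeros of $q_m$ by Lemma \ref{lem-PQ}.
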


\begin{proof}
	Let $n$ be a 
	non-negative integer and set $z_n=r_n-r_{n+1}$.
	Note that $r_n(t)$ matches the first $2n+1$ terms of Maclaurin expansion of
	$(1+t)^{-\alpha}$ while $r_{n+1}(t)$ matches two more, so the Maclaurin expansion of $z_n(t)$ should start from $t^{2n+1}$.  This implies that corresponding expansion of $Q_n(t)Q_{n+1}(t)
	z_n(t)$ must start from $t^{2n+1}$. On the other hand, note that
	\begin{equation*}
		z_n(t)=\frac{P_n(t)Q_{n+1}(t)-P_{n+1}(t) Q_n(t)}{Q_n(t)Q_{n+1}(t)},
	\end{equation*}
	so
	$$Q_n(t)Q_{n+1}(t)
	z_n(t)=P_n(t)Q_{n+1}(t)-P_{n+1}(t) Q_n(t)\in\cP^{2n+1}.$$
	These two relations imply
	\begin{equation}\label{eta}
		P_n(t)Q_{n+1}(t)-P_{n+1}(t) Q_n(t) = \eta_n t^{2n+1}
	\end{equation}
	with $\eta_n$ denoting a constant depending on $n$ only.
	
	Applying Lemma \ref{lem-PQ} we can write down $P_n(t)$ and $Q_n(t)$ as
	\begin{equation}\label{num}
		\begin{aligned}
			P_n(t)&=(-t)^{n} J_n^{\alpha,-\alpha}(-1/t)={}_2F_1(-n,\alpha-n; -2n;-t)\\
			&=\sum_{j=0}^n \frac {(-n)_j
				(\alpha-n)_j} {j! (-2n)_j} (-t)^j 
			:= 1 +\sum_{j=1}^n  a_n^j b_n^j(-\alpha) t^j
		\end{aligned}
	\end{equation}
	and
	\begin{equation}\label{den}
		\begin{aligned}
			Q_n(t)&=(-t)^{n} J_n^{-\alpha,\alpha}(-1/t)
			= \Fto(-n,-\alpha-n; -2n;-t)\\
			&=\sum_{j=0}^n \frac
			{(-n)_j\, (-\alpha-n)_j}
			{j! (-2n)_j}  (-t)^j 
			:= 1 +\sum_{j=1}^n  a_n^j b_n^j(\alpha) t^j
		\end{aligned}
	\end{equation}
	where
	\begin{equation}\label{eq4b}
		b_n^j(\alpha)= (n+\alpha)((n-1)+\alpha)\cdots((n+1-j)+\alpha)
	\end{equation}
	and
	\begin{equation}\label{eq4a}
		a_n^j=\frac {n(n-1)\cdots(n+1-j)}{j! 2n(2n-1)\cdots(2n+1-j)}\qquad
		\hbox{for }j=1,2,\ldots,n.
	\end{equation}
	The relation   \eqref{eq4b} implies that $b_{n+1}^{n+1}(\alpha)=(n+1+\alpha) b_n^n(\alpha)$
	so that
	\begin{equation*}
		\eta_n =a_n^nb_n^n(-\alpha)a_{n+1}^{n+1}b_{n+1}^{n+1}(\alpha)-a_n^nb_n^n(\alpha)a_{n+1}^{n+1}b_{n+1}^{n+1}(-\alpha)=
		2\alpha a^n_na_{n+1}^{n+1}b_n^n({\alpha})b_n^n(-\alpha).
	\end{equation*}
	Further applying \eqref{eq4a} we get	
	$$
	\eta_n=c_\alpha\frac{\Gamma(n+1-\alpha)\Gamma(n+1+\alpha)\Gamma(n+1)\Gamma(n+2)}{\Gamma(2n+1)\Gamma(2n+3)}
	$$
	where
	$c_\alpha=\frac{2\alpha}{\Gamma(1-\alpha)\Gamma(1+\alpha)}$. Thus, we get $\eta_n>0$ which implies that $r_{n}(t)>r_{n+1}(t)$. Furthermore  appealing to  Stirling's formula \cite[formula (6.1.39)]{MI1965}: for $t\gg 1$
	\begin{equation*}
		\Gamma(t+b)\approx \sqrt{2\pi}e^{-t} t^{t+b-1/2} 
	\end{equation*}
	it follows 
	
	\begin{equation*}
		\frac{\Gamma(n+1-\alpha)\Gamma(n+1+\alpha)\Gamma(n+1)\Gamma(n+2)}{\Gamma(2n+1)\Gamma(2n+3)}\approx {2\pi}2^{-4n-3}=\frac{\pi}{4} 2^{-4n}. 
	\end{equation*}
	Thus we get $\eta_n\le c'_\alpha 2^{-4n}$ with $c'_\alpha\approx \frac{\pi}{4} c_\alpha$.        
	By telescoping sums, it is clear that 
	\begin{equation}
		r_m(t)-(1+t)^{-\alpha}=\sum_{n=m}^{\infty}(r_{n}-r_{n+1})\le
		c'_\alpha \sum_{n=m}^{\infty}
		\frac{2^{-4n} t^{2n+1}}{Q_n(t)Q_{n+1}(t)}.
		\label{teles}
	\end{equation}
	So to get the desired bound \eqref{main-1} it remains to provide a lower bound for the denominator. 
	
	Let $t_j(\beta,\gamma)$ be as claimed before and set $\xi_j=t_{n+1-j}(0,0)$, for
	$j=1,\ldots,n$  to be the
	roots of the $n$'th Legendre polynomial $J_n^{0,0}(t)$.
	Since $Q_n(0)=1$,
	\begin{equation*}
		Q_n(t)=\prod_{j=1}^{n}(1+t_j(-\alpha,\alpha) t).
	\end{equation*} 
	As in \eqref{mark},
	$$t_j(-\alpha,\alpha)>t_j(0,0)$$
	so that
	$$Q_n(t)\ge \prod_{i=1}^{n}(1+\xi_i  t):=Q_n^0(t) ,   \ \ \mbox{for all } \ \ t \ge 0.
	$$
	Now,
	\begin{equation*}  
		\log_2 Q^0_n(t)=\sum_{i=1}^{n}\log_2(1+{\xi}_i t).
	\end{equation*} 
	Recall that here $\xi_i$, $i=1, \cdots, n$ are the roots of Legendre polynomial. Applying the inequalities for the roots of Legendre polynomials, see, e.g. \cite[ Equation (2)]{szego1936inequalities} 
	\begin{equation*}
		\cos\left(\frac{2i}{2n+1}\pi\right)<2 {\xi}_i-1<\cos\left(\frac{2i-1}{2n+1}\pi\right),\quad i=1,2,\cdots,n,
	\end{equation*}
	we get  
	\begin{equation*}
		\begin{aligned}
			\log_2 Q_n(t)&>\sum_{i=1}^{n}\log_2\left(1+\frac{t}{2}\left(1+\cos\frac{2i\pi}{2n+1}\right)\right).
		\end{aligned}
	\end{equation*}
	Note that $\frac{t}{2}(1+\cos\frac{2i\pi}{2n+1}) \in [0,1]$  and $\log_2(1+\eta)\ge \eta$ for $\eta\in [0,1]$ so 
	\begin{equation*}
		\begin{aligned}
			\log_2 Q_n(t)&>\sum_{i=1}^{n}\frac{t}{2}\left(1+\cos\frac{2i\pi}{2n+1}\right)=\frac{nt}{2}+\frac{t}{2}\sum_{i=1}^{n}  \cos\frac{2i\pi}{2n+1}.
		\end{aligned}
	\end{equation*}
	The well known identity $\sum_{i=1}^{n}\cos\frac{2i\pi}{2n+1}=- \frac12$ implies $\log_2 Q_n(t)>\frac{nt}{2}-\frac{t}{4}$ so that
	\begin{equation*}
		Q_n(t)Q_{n+1}(t)\ge 2^{\frac{nt}{2}-\frac{t}{4}}2^{\frac{(n+1)t}{2}-\frac{t}{4}}= 2^{nt}.
	\end{equation*}
	Combining this with \eqref{teles} gives
	\begin{equation*}
		r_m(t)-(1+t)^{-\alpha}\le
		\frac{32}{31}{c}'_{\alpha}\frac{2^{-4m}t^{2m+1}}{2^{mt}}    \quad     \hbox{for }t\in [0,1]
	\end{equation*}
	with
	\begin{equation*}
		c'_\alpha=\frac{\pi}{4} c_\alpha\approx\frac{\alpha\pi}{2\Gamma(1-\alpha)\Gamma(1+\alpha)},
	\end{equation*}
	which ends our proof by ignoring the factor $\frac{32}{31}$.
\end{proof}

Note that the estimate of Theorem \ref{Pro-est} is crucial in the the theoretical analysis of the method. Thus, the question how robust and sharp is the estimate \eqref{main-1} is of great importance. In Fig.\ref{figure_test-1} we  show the actual and the predicted by  \eqref{main-1} errors for various values of $t$, $\alpha$, and $m$. All graphs show that 
actual error and predicted errors differ insignificantly, with the biggest difference, as one expects, occurring at $t=1$.

\begin{figure}
	\centering
	\begin{tabular}{c c c}
		\subfigure[$\alpha=0.1$]
		{\includegraphics[width=0.33\textwidth,trim=140 210 160 220, clip]{./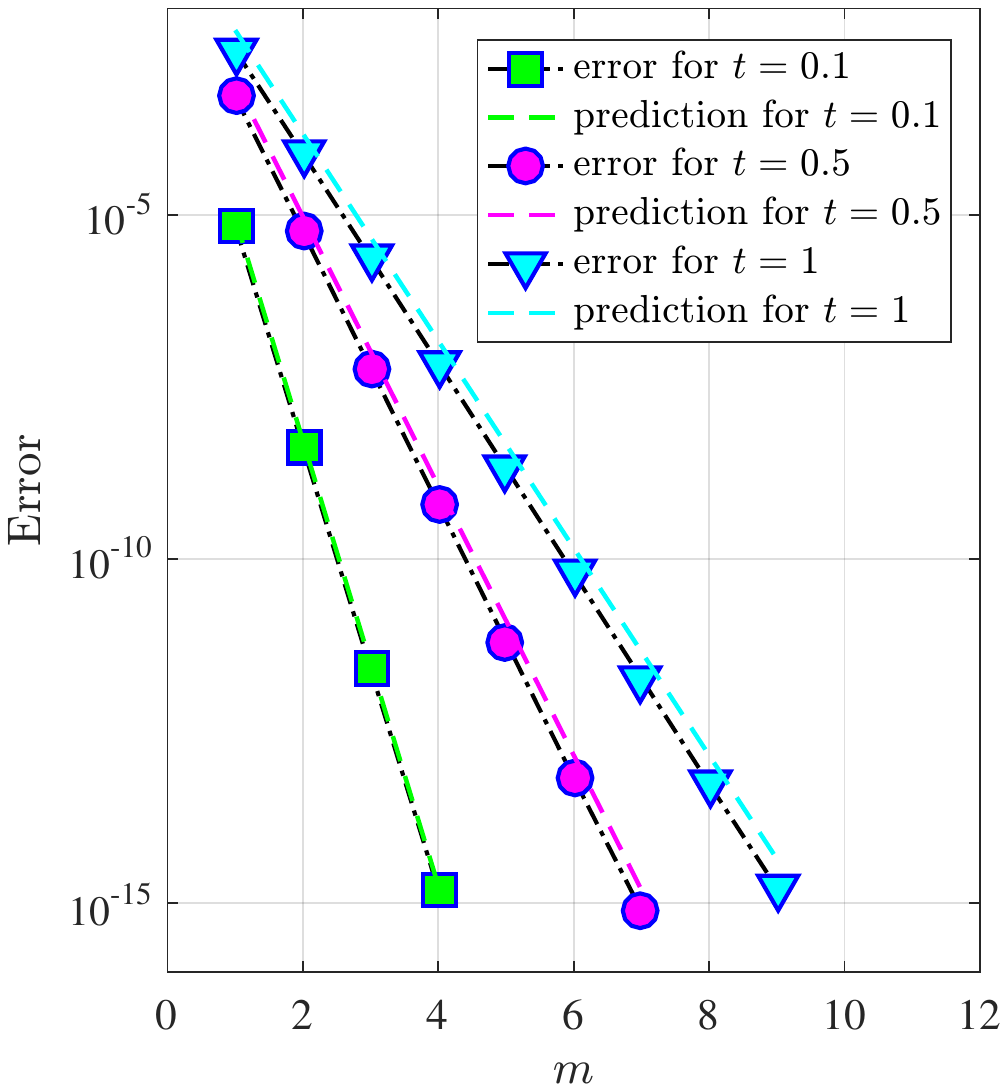}}
		\subfigure[$\alpha=0.5$]
		{\includegraphics[width=0.33\textwidth,trim=140 210 160 220, clip]{./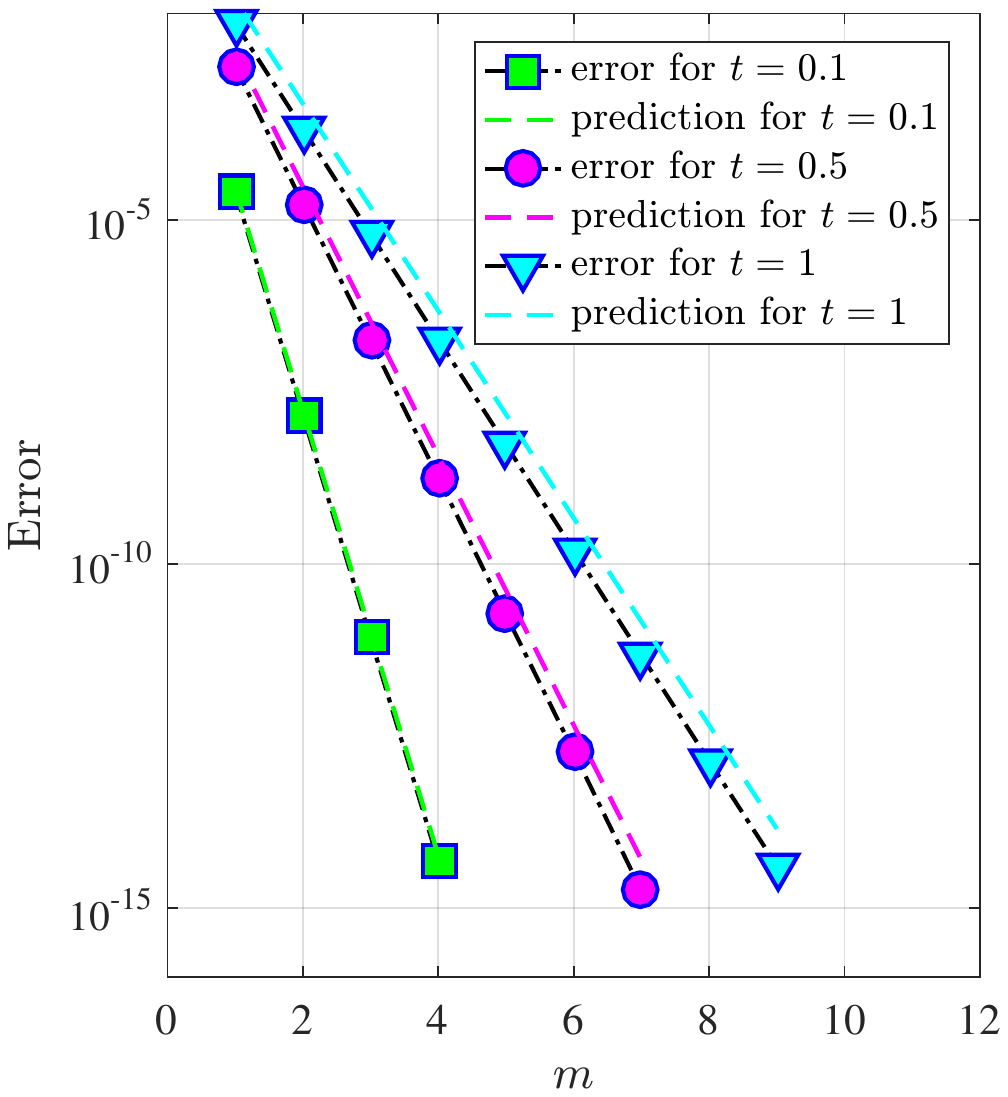}}
		\subfigure[$\alpha=0.9$]
		{\includegraphics[width=0.33\textwidth,trim=140 210 160 220, clip]{./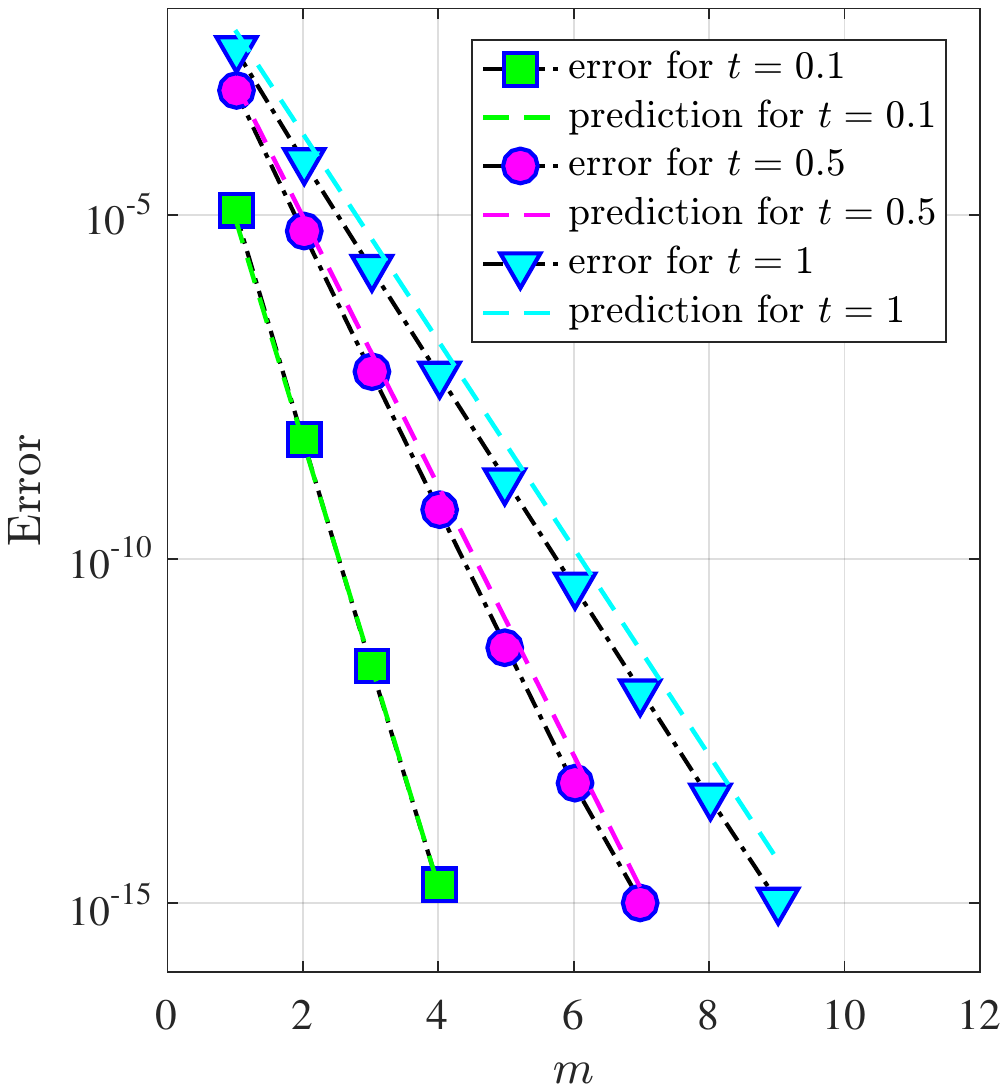}}
	\end{tabular}
	\caption{$r_m(t)-(1+t)^{-\alpha}$ for different $\alpha$ and $t$,  and theoretical predictions by Theorem \ref{Pro-est} involving $c'_\alpha$. }
	\label{figure_test-1}
\end{figure}
\section{Approximation for $\cLgh^{-\alpha}$ and error estimate }\label{Sec-3}
Suppose $f_h\in V_h$ is an appropriate approximation of $f$, then we approximate $u$ by $u_h=\cLgh^{-\alpha}f_h$. Applying \eqref{prod-L} we can further evaluate $u_h$ by the following approximant
\begin{equation}\label{appro-prod-L}
	u_h\approx U_{L+1}=\frac{1}{\tlambda^\alpha}\prod_{k=0}^{L}r_m(\tau_k\cBh(\tlambda\cI+t_k\cBh)^{-1}).
\end{equation}

In this section, we shall analyze the error between $u_h$ and $U_{L+1}$.  Recall that the spectra of $\cLgh$ is  contained in $[\lambda_{h,min},\lambda_{h,max}]$,  
$\tlambda\in (0,\lambda_{h,min}]$,  $\cBh=\cLgh-\tlambda\cI$, 
$t_k\in [0,1]$ with $t_0=0$ and $t_{L+1}=1$, and $\tau_k=t_{k+1}-t_k$,
$k=0,1,\dots,L$. 

Thanks to \eqref{appro-prod-L}, the approximation scheme can be obtained from the recurrence
\begin{equation}\label{Scheme-1}
\begin{aligned}
	U_0=&\tlambda^{-\alpha} f_h  \\
	U_n= & r_m(\tau_{n-1} \cBh (\tlambda \cI +t_{n-1} \cBh)^{-1}) U_{n-1},\   \hbox{ for } n=1,2,\ldots,L+1,\\
	\ U_{L+1} & \quad \mbox{is an approximation to } \ \ u_h=\cLgh^{-\alpha} f_h.
\end{aligned}
\end{equation}

If we 
use the expansion
$
U_{L+1}=\sum_{j=1}^D \mu(\lambda_{j,h})  (f_h,\psi_{j,h})_{\cM_h} \psi_{j,h},
$
then \eqref{Scheme-1} implies that the coefficient $\mu(\lambda)$ satisfies the following recurrence:
\begin{equation}\label{Scheme-lam}
\begin{aligned}
	\mu_0(\lambda)&=\tlambda^{-\alpha},\\
	\mu_n(\lambda)&= r_m(\tau_{n-1}(\lambda-\tlambda) (\tlambda   +t_{n-1}(\lambda-\tlambda))^{-1})\mu_{n-1}(\lambda),\quad \hbox{ for }
	n=1,2,\ldots,L+1,
\end{aligned}
\end{equation}
and set $\mu(\lambda)=\mu_{L+1}(\lambda)$, with $\lambda=\lambda_{j,h}$, $j=1, \dots, D$.

\begin{theorem}\label{err-t}
	Denote $\theta_n(\lambda)=\frac{\tau_n(\lambda-\tlambda)}{\tlambda+t_n(\lambda-\tlambda)}$ for $n=0,1,\cdots,L$. Suppose the mesh $\{t_n\}_{n=0}^{L+1}$ satisfies $\theta_n(\lambda_{h,max})\le 1$ and  $\frac{\tau_{n+1}}{\tau_n}\le \nu$. 
	Then for positive integer $m$ we have
	\begin{equation}\label{t1}
		|\lambda^{-\alpha} - \mu(\lambda)| \le\widetilde{c}\tlambda^{-\alpha}{2^{-5m}} \quad \forall
		\lambda\in [\lambda_{h,min},\lambda_{h,max}]
	\end{equation}
	and 
	\begin{equation}\label{t2}
		\|\cLgh^{-\alpha} f_h -U_{L +1}\|_{L^2(\cM_h)} \le \widetilde{c}\tlambda^{-\alpha}{2^{-5m}} \| f_h \|_{L^2(\cM_h)},
	\end{equation}
	where $\widetilde{c}=(\alpha+\nu)2^\alpha \frac{c'_\alpha}{\alpha}$ with $c'_\alpha\approx\frac{\alpha\pi}{2\Gamma(1-\alpha)\Gamma(1+\alpha)}$ given in Theorem \ref{Pro-est}. 
\end{theorem}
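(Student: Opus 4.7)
The plan is to reduce (\ref{t1}) to a scalar identity by unwinding the recurrence (\ref{Scheme-lam}) into closed product form, apply Theorem \ref{Pro-est} term-by-term, and then sum via a telescoping argument; (\ref{t2}) will follow from Parseval's identity on the eigenbasis of $\cLgh$.

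First I would unroll (\ref{Scheme-lam}) to get $\mu(\lambda)=\tlambda^{-\alpha}\prod_{k=0}^{L} r_m(\theta_k(\lambda))$. Setting $x_\ell:=\tlambda+t_\ell(\lambda-\tlambda)$ so that $1+\theta_k=x_{k+1}/x_k$, the product $\prod_{k=0}^{L}(1+\theta_k)$ telescopes to $x_{L+1}/x_0=\lambda/\tlambda$, giving the exact representation $\lambda^{-\alpha}=\tlambda^{-\alpha}\prod_{k=0}^L(1+\theta_k)^{-\alpha}$. Writing $A_k:=(1+\theta_k(\lambda))^{-\alpha}$ and $B_k:=r_m(\theta_k(\lambda))$, the standard product-difference identity yields
\begin{equation*}
\mu(\lambda)-\lambda^{-\alpha}=\tlambda^{-\alpha}\sum_{k=0}^{L}(B_k-A_k)\Big(\prod_{j<k}B_j\Big)\Big(\prod_{j>k}A_j\Big);
\end{equation*}
the leading product is $\le 1$ by the monotonicity of $r_m$, and the trailing product telescopes (as above) to $(x_{k+1}/\lambda)^\alpha\le 1$.

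Because $\theta_k(\lambda)$ is monotone increasing in $\lambda$, the mesh hypothesis $\theta_k(\lambda_{h,\max})\le 1$ places every $\theta_k(\lambda)$ in $[0,1]$, so Theorem \ref{Pro-est} applies pointwise: $B_k-A_k\le c'_\alpha 2^{-4m}\theta_k^{2m+1}/2^{m\theta_k}$. An elementary calculus check shows that $\phi(\theta):=\theta^{2m}/2^{m\theta}$ is increasing on $[0,1]$ (its unique critical point $2/\ln 2$ lies outside the interval), so $\phi(\theta_k)\le 2^{-m}$ and hence $B_k-A_k\le c'_\alpha 2^{-5m}\theta_k$. This reduces matters to bounding $S(\lambda):=\sum_{k=0}^{L}\theta_k(x_{k+1}/\lambda)^\alpha$.

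The main obstacle is showing that $S(\lambda)$ is $O(1/\alpha)$ uniformly in $L$; without this, the bound would be weakened by a factor of $\log(\lambda_{h,\max}/\tlambda)$. For this I would combine the convexity inequality $(1+\theta)^\alpha-1\ge \alpha\theta/(1+\theta)\ge\alpha\theta/2$ on $[0,1]$ with $x_{k+1}/x_k=1+\theta_k$ to obtain
\begin{equation*}
\theta_k(x_k/\lambda)^\alpha\le \frac{2}{\alpha}\bigl[(x_{k+1}/\lambda)^\alpha-(x_k/\lambda)^\alpha\bigr],
\end{equation*}
whose right-hand side telescopes to $(2/\alpha)(1-(\tlambda/\lambda)^\alpha)\le 2/\alpha$. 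The factor $(1+\theta_k)^\alpha\le 2^\alpha$ converts $x_k$ into $x_{k+1}$, and a quantitative refinement exploiting the mesh bound $\tau_{k+1}/\tau_k\le\nu$ tightens the multiplicative constant to give $S(\lambda)\le(\alpha+\nu)2^\alpha/\alpha$, which after substitution produces (\ref{t1}) with $\widetilde{c}=(\alpha+\nu)2^\alpha c'_\alpha/\alpha$. Finally (\ref{t2}) is immediate: the recurrence (\ref{Scheme-lam}) acting on eigenfunctions yields $U_{L+1}-\cLgh^{-\alpha}f_h=\sum_j\bigl(\mu(\lhj)-\lhj^{-\alpha}\bigr)(f_h,\phj)_{\cM_h}\phj$, and Parseval's identity combined with the uniform bound (\ref{t1}) over $\lhj\in[\lambda_{h,\min},\lambda_{h,\max}]$ delivers the claim.
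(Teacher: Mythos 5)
Your argument is correct and reaches the stated bound, but it takes a genuinely different route in the key step. You unroll the recurrence \eqref{Scheme-lam} into the closed product $\mu(\lambda)=\tlambda^{-\alpha}\prod_k r_m(\theta_k)$ and apply the product-difference identity, whereas the paper propagates the error recursively via $e_{n+1}=(1+\theta_n)^{-\alpha}e_n+[r_m(\theta_n)-(1+\theta_n)^{-\alpha}]\mu_n$ and then unrolls; these are essentially equivalent and both arrive at a sum of the form $\sum_k \theta_k\,(x_k/\lambda)^\alpha$ (up to a $2^\alpha$ index shift). The real divergence is in how that sum is shown to be $O(1/\alpha)$ uniformly in $L$: the paper shifts the index, invokes $\tau_{n+1}\le\nu\tau_n$, and compares the sum with the integral $\int_0^1(\lambda-\tlambda)(\tlambda+t(\lambda-\tlambda))^{\alpha-1}\,dt=(\lambda^\alpha-\tlambda^\alpha)/\alpha$, which is exactly where the mesh-ratio hypothesis enters; you instead use the convexity inequality $(1+\theta)^\alpha-1\ge\alpha\theta/2$ on $[0,1]$ to telescope directly, which is cleaner and — notably — does not use the hypothesis $\tau_{n+1}/\tau_n\le\nu$ at all. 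The price is the constant: your telescoping yields $S(\lambda)\le 2^{\alpha+1}/\alpha$, i.e.\ $\widetilde{c}=2\cdot 2^\alpha c'_\alpha/\alpha$, which matches or beats the paper's $(\alpha+\nu)2^\alpha c'_\alpha/\alpha$ only when $\nu\ge 2-\alpha$ (true for the optimal mesh \eqref{mesh_t}, where $\nu=2$, but not for, say, a uniform mesh with $\nu=1$). You acknowledge this and defer to an unspecified ``quantitative refinement''; since the theorem's constant is in any case only pinned down through $c'_\alpha\approx\cdots$, this is a cosmetic gap, but if you want the literal constant for all admissible $\nu$ you would need to supply that refinement or fall back on the paper's integral comparison. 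The Parseval step for \eqref{t2} is identical to the paper's.
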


\begin{proof}
	Fix $\lambda$ in  $[\lambda_{h,min},\lambda_{h,max}]$ and let
	$\mu_n$ be as \eqref{Scheme-lam}. Let
	$v_{n}(\lambda)=(\tlambda +t_{n}(\lambda-\tlambda))^{-\alpha}$,
	$e_{n}=\mu_{n}-v_{n}$  for $n=0,\ldots,L+1$. Furthermore, as claimed we denote
	$$\theta_{n}(\lambda)= \frac{\tau_n (\lambda-\tlambda)}{\tlambda+t_{n}(\lambda -\tlambda)},
	$$
	for $n=0,\ldots,L$. We note that, 
	\begin{equation*}
		v_{n+1}=\left(1+\theta_{n}\right)^{-\alpha}v_{n}.
	\end{equation*}
	Thus,
	$$
	e_{n+1} =(1+\theta_{n})^{-\alpha}
	e_{n} + [r_m(\theta_{n})-(1+\theta_{n})^{-\alpha}] \mu_{n}.$$
	Note $r_m(t)\le 1$ and thus $\mu_n\le \mu_0=\tlambda^{-\alpha}$. Note that $\theta_n(\lambda_{h,max})\le 1$ implies $\theta_n(\lambda)\le 1$ for any $\lambda\in[\tlambda,\lambda_{h,max}]$, so applying Theorem  \ref{Pro-est}  we get
	\begin{equation*}
		|e_{n+1}| \le (1+\theta_{n})^{-\alpha}
		|e_{n}| +  {{c}'_{\alpha}}{\tlambda^{-\alpha}}\frac{2^{-4m}{\theta_n}^{2m+1}}{2^{m\theta_n}}
	\end{equation*}
	for $n=0,1,\ldots,L$. Note that $g(t)=t^{2m}{2^{-mt}}$ increase with $t$ for $t\in [0,1]$ and $g(1)=2^{-m}$ so we have
	\begin{equation*}
		|e_{n+1}| \le (1+\theta_{n})^{-\alpha}
		|e_{n}| +  {{c}'_{\alpha}}{\tlambda^{-\alpha}} 2^{-5m}\theta_n
	\end{equation*}
	We shall derive the error by recursion. For notational simplicity, we denote $\eta={{c}'_{\alpha}}{\tlambda^{-\alpha}} 2^{-5m}$, then
	\begin{equation*}
		\begin{aligned}
			|e_{L+1}|&\le (1+\theta_{L})^{-\alpha}|e_{L}|+\eta\theta_L\\
			&\le (1+\theta_{L})^{-\alpha}(1+\theta_{L-1})^{-\alpha}|e_{L-1}|+(1+\theta_{L})^{-\alpha}\eta\theta_{L-1}+\eta\theta_L\\
			&\le \cdots\\
			&\le \eta\theta_L+(1+\theta_{L})^{-\alpha}\eta\theta_{L-1}+\cdots+\prod_{i=2}^{L}(1+\theta_{i})^{-\alpha}\eta\theta_1+\prod_{i=1}^{L}(1+\theta_{i})^{-\alpha}\eta\theta_0
		\end{aligned}
	\end{equation*}
Use the fact that $\theta_L\le 2^\alpha(1+\theta_L)^{-\alpha}\theta_L$ and for $k=1,2,\cdots, L$
\begin{equation*}
	\prod_{i=k}^{L}(1+\theta_{i})^{-\alpha}\theta_{k-1}\le 2^\alpha \prod_{i=k-1}^{L}(1+\theta_{i})^{-\alpha}\theta_{k-1}
\end{equation*}
then we obtain
\begin{equation*}
	|e_{L+1}|\le (1+\theta_{L})^{-\alpha} \eta'\theta_L+\prod_{i=L-1}^{L}(1+\theta_{i})^{-\alpha}\eta'\theta_{L-1}+\cdots+\prod_{i=1}^{L}(1+\theta_{i})^{-\alpha}\eta'\theta_1+\prod_{i=0}^{L}(1+\theta_{i})^{-\alpha}\eta'\theta_0.
\end{equation*}
	with $\eta'=2^{\alpha}\eta$. Note that $\tau_n=t_{n+1}-t_n$ and $t_{L+1}=1$, so for $n\le L$
	\begin{equation*}
		\prod_{i=n}^{L}(1+\theta_{i})^{-\alpha}=\prod_{i=n}^{L}\left[\frac{\tlambda+t_{i+1}(\lambda-\tlambda)}{\tlambda+t_i(\lambda-\tlambda)}\right]^{-\alpha}=\left[\frac{\tlambda+t_{L+1}(\lambda-\tlambda)}{\tlambda+t_n(\lambda-\tlambda)}\right]^{-\alpha}=\frac{\lambda^{-\alpha}}{(\tlambda+t_n(\lambda-\tlambda))^{-\alpha}}.
	\end{equation*}
	Thus 
	\begin{equation*}
		\begin{aligned}
			|e_{L+1}|\le \eta'\sum_{n=0}^{L}\frac{\lambda^{-\alpha}\theta_n}{(\tlambda+t_n(\lambda-\tlambda))^{-\alpha}}&=\frac{\eta'}{\lambda^{\alpha}}\sum_{n=0}^{L}\frac{\tau_n(\lambda-\tlambda)}{(\tlambda+t_n(\lambda-\tlambda))^{1-\alpha}}\\
			&=  \frac{\eta'}{\lambda^{\alpha}} \frac{\tau_0(\lambda-\tlambda)}{\tlambda^{1-\alpha}}+\frac{\eta'}{\lambda^{\alpha}}\sum_{n=0}^{L-1}\frac{\tau_{n+1}(\lambda-\tlambda)}{(\tlambda+t_{n+1}(\lambda-\tlambda))^{1-\alpha}}\\
			&\le \eta'\frac{\tlambda^{\alpha}}{\lambda^{\alpha}} \theta_0+ \nu\frac{\eta'}{\lambda^{\alpha}}\sum_{n=0}^{L-1}\frac{\tau_{n}(\lambda-\tlambda)}{(\tlambda+t_{n+1}(\lambda-\tlambda))^{1-\alpha}}\\
		\end{aligned}
	\end{equation*}
Since $\theta_0\le 1$, $\tlambda\le \lambda$ and  $\frac{\lambda-\tlambda}{(\tlambda+t(\lambda-\tlambda))^{1-\alpha}}$ is monotonically decreasing, it follows
\begin{equation*}
		\begin{aligned}
		|e_{L+1}|
		&\le \eta'+\nu\frac{\eta'}{\lambda^{\alpha}}\int_{0}^{1}\frac{\lambda-\tlambda}{(\tlambda+t(\lambda-\tlambda))^{1-\alpha}}dt\le \left(1+\frac{\nu}{\alpha}\right)\eta'
	\end{aligned}
\end{equation*}
	which leads to 
	\begin{equation*}
		\begin{aligned}
			|\lambda^{-\alpha} - \mu(\lambda)| =|e_{L+1}|\le \widetilde{c}\tlambda^{-\alpha}{2^{-5m}}.
		\end{aligned}
	\end{equation*}
	where $\widetilde{c}=(\alpha+\nu)2^\alpha \frac{c'_\alpha}{\alpha}$ with $c'_\alpha\approx\frac{\alpha\pi}{2\Gamma(1-\alpha)\Gamma(1+\alpha)}$ given in Theorem \ref{Pro-est}. 
	
	By Parseval's identities it follows:
\begin{equation*}
\begin{aligned}
		\| \cLgh^{-\alpha} f_h -U_{L+1}\|_{L^2(\cM_h)}&=\left (\sum_{j=1}^D
	|\lambda_{j,h}^{-\alpha}-\mu(\lambda_{j,h})|^2 |( f_h,\psi_{j,h})_{\cM_h}|^2 \right )^\frac12 \\
	&\le \widetilde c \tlambda^{-\alpha}{2^{-5m}}\| f_h \|_{L^2(\cM_h)}
\end{aligned}
\end{equation*}
	and this completes the proof.
\end{proof}
Obviously the geometrically graded mesh $t_{n+1}=2^nt_1$ with proper $t_1\le \lambda^{-1}_{h,max}$ 
such that $t_{L+1}=1$ in \cite{DLP-Pade} satisfies $\theta_n(\lambda_{h,max})\le 1$. However, to reach $t_{L+1}=1$ with fewer steps, say, to get smaller $L$, the best choice would be $\theta_{n}(\lambda_{h,max})=1$ for $n=0,1,\cdots,L$. Thus we obtain $t_0=0$, 
\begin{equation}\label{mesh_t}
	t_1=\frac{\tlambda}{\lambda_{h,max}-\tlambda},
	\ \ t_{n+1}=\min\{2^{n+1}t_1-t_1,1\}, \quad n=1,2,\cdots,L,
\end{equation}
thus
\begin{equation}\label{num_L}
	 L+1=\lceil \log_2 (\lambda_{h,max}/\tlambda)\rceil.
\end{equation}

Fig.\ref{Fig-ans-r} presents the error $e(\lambda)=|\mu(\lambda)-\lambda^{-\alpha}|$ for $\lambda\in[2,2^{50}]$ under various $\alpha$ and $m$, where we took $\hat{\lambda}=1$ in the computation. In fact, if one considers the relative error $\lambda^{\alpha}{e(\lambda)}$, then $m=10$ can give an approximation with $\mathcal{O}(10^{-14})$ accuracy for $\alpha=0.1,0.5$ and $0.9$. 
\begin{figure}[!htp]
	\centering
	\begin{tabular}{ccc}
		\subfigure[$m=4$]
		{\includegraphics[width=0.33\textwidth,trim=80 230 100 220, clip]{./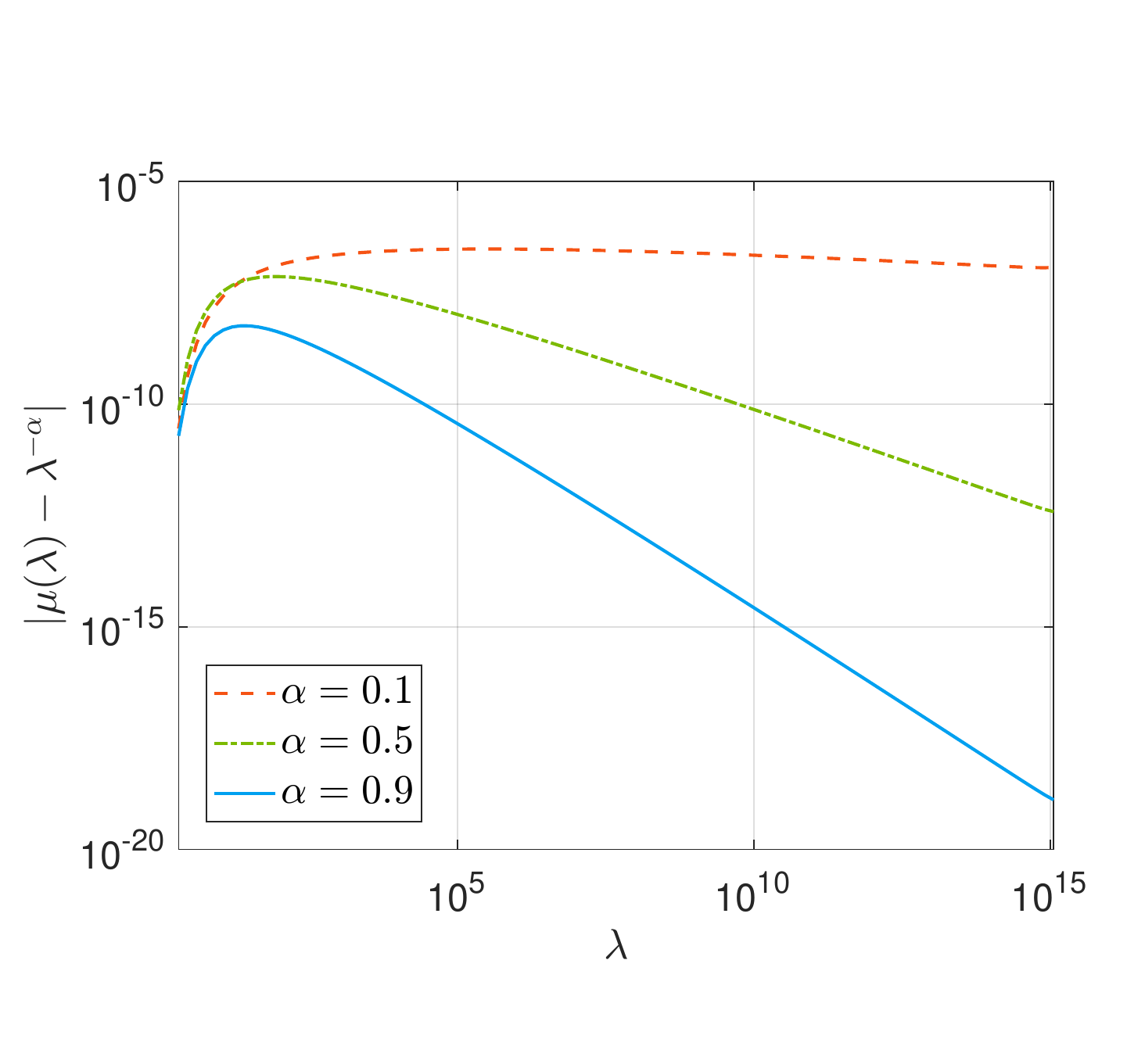}}
		\subfigure[$m=7$]
		{\includegraphics[width=0.33\textwidth,trim=80 230 100 220, clip]{./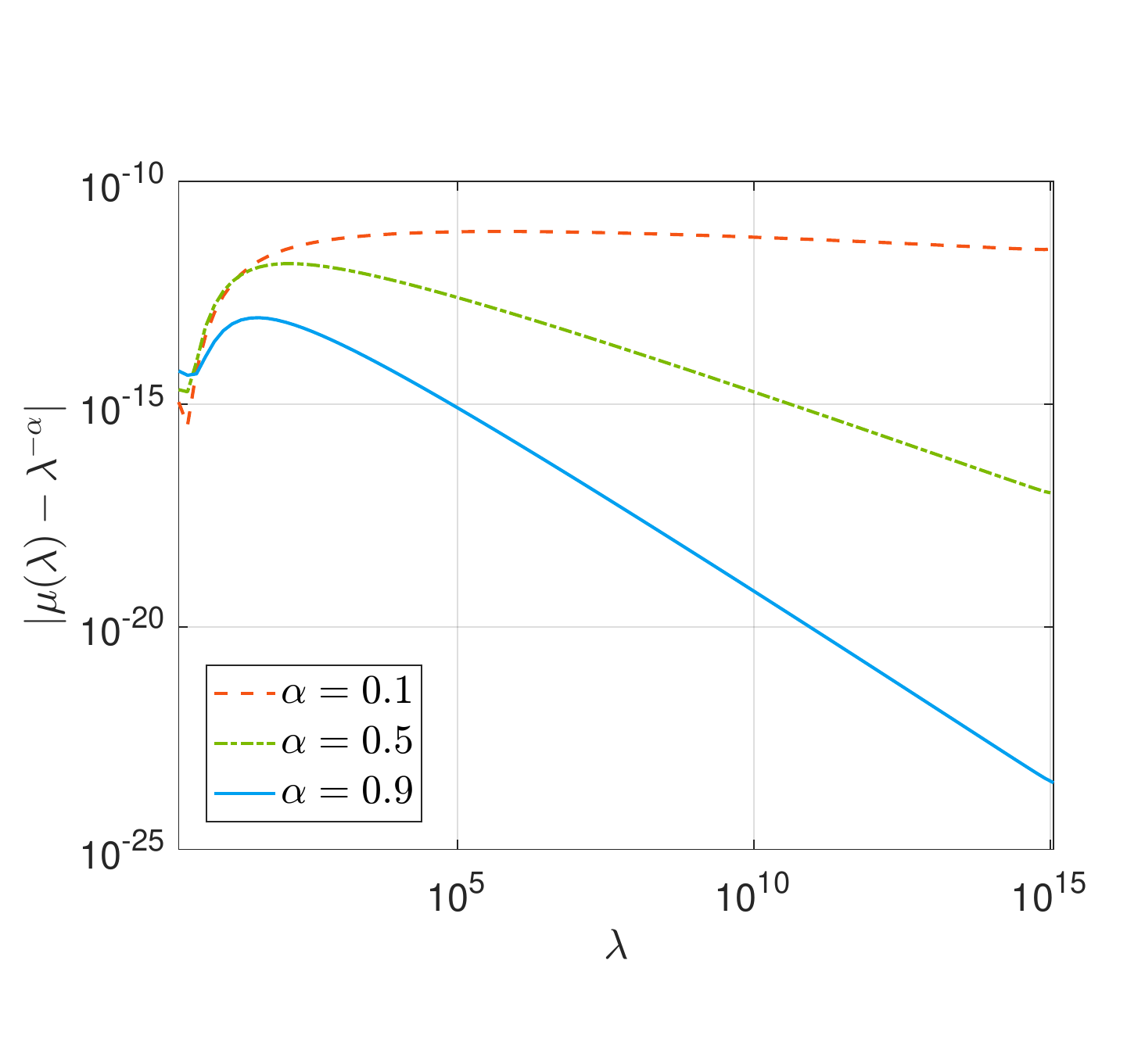}}
		\subfigure[$m=10$]
		{\includegraphics[width=0.33\textwidth,trim=80 230 100 220, clip]{./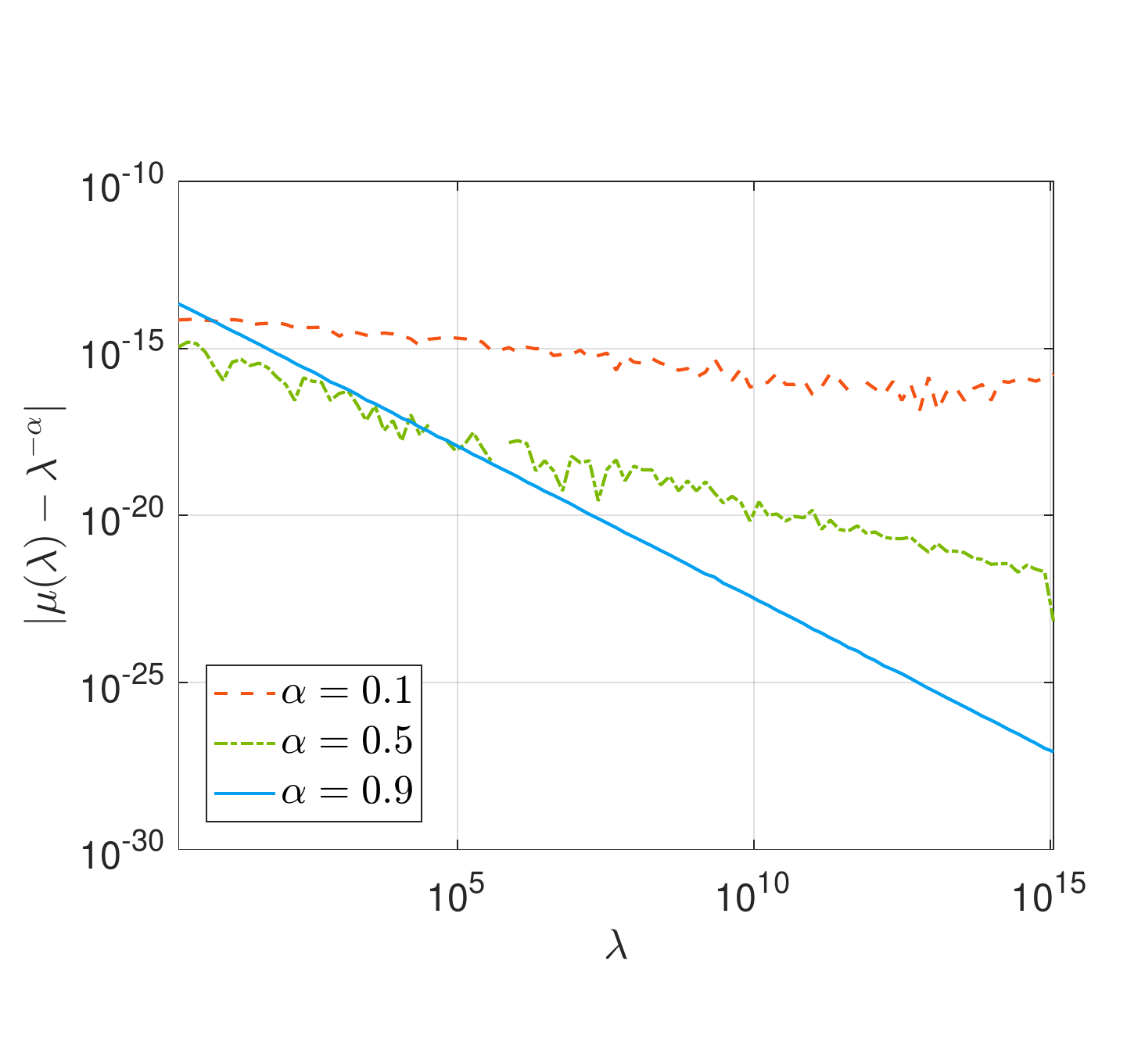}}
	\end{tabular}
	\caption {Error $ |\mu(\lambda)-\lambda^{-\alpha}|$ with respect to $\lambda$ under different $\alpha$ and $m$.}
	\label{Fig-ans-r}
\end{figure}

Note that the mesh constructed in \eqref{mesh_t} yields  $\frac{\tau_{n+1}}{\tau_n}\le 2$, so by setting $\nu=2$ in Theorem \ref{err-t} we arrive at the following corollary.
\begin{corollary}\label{err-t-cor}
	Suppose the mesh $\{t_i\}_{i=0}^{L+1}$ is constructed by \eqref{mesh_t} and denote by $N_s=m(L+1)$ the number of total solves, then
	\begin{equation}\label{t3}
		\|\cLgh^{-\alpha} f_h -U_{L +1}\|_{L^2(\cM_h)} \le \hat{c}\tlambda^{-\alpha}{32^{-\frac{N_s}{\lceil \log_2 (\lambda_{h,max}/\tlambda)\rceil}}} \| f_h \|_{L^2(\cM_h)},
	\end{equation}
	where $\hat{c}\approx \frac{(\alpha+2)2^{\alpha-1}\pi}{\Gamma(1-\alpha)\Gamma(1+\alpha)}$. 
\end{corollary}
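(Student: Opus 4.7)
The plan is to derive the corollary as a direct specialization of Theorem \ref{err-t}, with the only real work being to verify that the geometric mesh \eqref{mesh_t} satisfies the two structural hypotheses of that theorem and then to collect the constants.

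First I would record the explicit form of the nodes and gaps. Before the final cap is imposed, the recursion in \eqref{mesh_t} gives $t_n = (2^n - 1)t_1$ with $t_1 = \tlambda/(\lambda_{h,max} - \tlambda)$, so $\tau_n = 2^n t_1$ for $n \ge 1$ and $\tau_0 = t_1$. A one-line computation then shows that both the numerator and denominator of $\theta_n(\lambda_{h,max})$ equal $2^n \tlambda$:
\begin{equation*}
\tlambda + t_n(\lambda_{h,max} - \tlambda) = 2^n \tlambda, \qquad \tau_n(\lambda_{h,max} - \tlambda) = 2^n \tlambda.
\end{equation*}
Hence $\theta_n(\lambda_{h,max}) = 1$ for every $n = 0, 1, \ldots, L$, and by monotonicity $\theta_n(\lambda) \le 1$ throughout the spectrum. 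The same formulas give $\tau_{n+1}/\tau_n = 2$; truncation at $t_{L+1} = 1$ can only decrease this last ratio, so $\nu = 2$ is admissible in Theorem \ref{err-t}.

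Second, I would count steps. The requirement $t_{L+1} = 1$ with uninterrupted doubling forces $(2^{L+1} - 1)t_1 \ge 1$, i.e., $L + 1 \ge \log_2(\lambda_{h,max}/\tlambda)$, and the smallest such integer is $L + 1 = \lceil \log_2(\lambda_{h,max}/\tlambda)\rceil$, matching \eqref{num_L}.

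Third, I would substitute $\nu = 2$ into bound \eqref{t2} of Theorem \ref{err-t}. The prefactor becomes
\begin{equation*}
\widetilde{c} = (\alpha + 2)\, 2^{\alpha}\, \frac{c'_\alpha}{\alpha} \;\approx\; \frac{(\alpha + 2)\, 2^{\alpha-1} \pi}{\Gamma(1-\alpha)\Gamma(1+\alpha)} \;=:\; \hat{c},
\end{equation*}
using the asymptotic value of $c'_\alpha$ from Theorem \ref{Pro-est}. Rewriting $2^{-5m} = 32^{-m}$ and using $N_s = m(L+1)$ to eliminate $m$ delivers the claimed bound \eqref{t3}.

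No genuine obstacle arises; this is essentially bookkeeping on top of Theorem \ref{err-t}. The only point demanding some care is the identity $\theta_n(\lambda_{h,max}) = 1$, which expresses the ``optimality'' of the doubling strategy and justifies the sharp count $L+1 = \lceil \log_2(\lambda_{h,max}/\tlambda)\rceil$ rather than a larger number of steps.
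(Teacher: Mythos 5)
Your proposal is correct and follows exactly the route the paper takes: the paper dispenses with the corollary in one sentence by noting that the mesh \eqref{mesh_t} gives $\tau_{n+1}/\tau_n\le 2$ and then setting $\nu=2$ in Theorem \ref{err-t}, with $2^{-5m}=32^{-m}$ and $N_s=m(L+1)$ yielding the stated exponent. Your additional verifications (that $\theta_n(\lambda_{h,max})=1$, that truncation preserves the hypotheses, and the step count $L+1=\lceil\log_2(\lambda_{h,max}/\tlambda)\rceil$) are all sound and simply make explicit what the paper leaves implicit.
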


Now we shall show the stability of the algorithm. Recall that
\begin{equation*}
	r_m(t)=\prod_{i=1}^m\frac{1+t_i(\alpha,-\alpha)t}{1+t_i(-\alpha,\alpha)t}
	\quad \mbox{ and } \quad 
	{U}_{l+1} =r_m\left(\tau_l \cBh (\tlambda \cI+t_l\cBh)^{-1} \right) {U}_{l},
\end{equation*}
so appealing to Proposition \ref{interlace} one can directly obtain all the eigenvalues of the matrix $r_m\left(\tau_l \cBh (\tlambda \cI+t_l\cBh)^{-1} \right)$ lies in the interval $(0,1)$, which implies the stability.

In fact, to implement the algorithm more efficiently, we rewrite  the rational function $r_m(t)$ as a sum of partial fractions:
\begin{equation}\label{partial_frac}
	r_m(t)=\beta_0+\sum_{i=1}^m\frac{\beta_i}{1+t_i(-\alpha,\alpha)t}
\end{equation}
where 
\begin{equation}\label{eq-beta}
	\beta_0=\prod_{i=1}^{m}\frac{t_i(\alpha,-\alpha)}{t_i(-\alpha,\alpha)} >0 
	\quad \mbox{ and }\quad 
	\beta_i=\frac{\prod_{j=1}^{m}(1-t_j(\alpha,-\alpha)/t_i(-\alpha,\alpha))}{\prod_{j\neq i}(1-t_j(-\alpha,\alpha)/t_i(-\alpha,\alpha))} >0.
\end{equation}
Furthermore, using the matrix representation of the operator $\cLgh=\mbM^{-1} \mbS$ 
through the stiffness matrix $\mbS$ and the mass matrix $\mbM$, the operator $r_m(\tau_l\cBh(\tlambda\cI+t_l\cBh)^{-1})$ could be written in matrix form as
\begin{equation}\label{matrix_r}
	\begin{aligned}
		&\beta_0\mbI+\sum_{i=1}^{m}\beta_i[\tlambda\mbM+(t_l+t_i(-\alpha,\alpha)\tau_l)(\mbS-\tlambda\mbM)]^{-1}[\tlambda\mbM+t_l(\mbS-\tlambda\mbM)]\\
		&:=\beta_0\mbI+\sum_{i=1}^m\mbR_l^i.
	\end{aligned}
\end{equation}
In these notation $\mbS \vec{\psi}_{j,h}  = \lambda_{j,h} \mbM \vec{\psi}_{j,h}$ with $\{\vec{\psi}_{j,h}\}_{j=1}^{D}$ the eigenvectors.  The choice $\tlambda \in(0, \lambda_{1,h}]$ 
and the properties of the Pad\'e approximation ensures that 
the  $ \tlambda\mbM+(t_l+t_i(-\alpha,\alpha)\tau_l)(\mbS-\tlambda\mbM) $ is positive definite.

Suppose $v_h \in V_h$ is given as a column vector $\vec{v} =(v_1, \dots, v_D)^T \in \bR^D$ through  representation of $v_h=\sum_{i=1}^D v_i {\phi}_i$ with respect to the nodal basis $\phi_i$ of $V_h$.
Let $\vec{F}$ be the column vector from the representation of $f_h$ with respect to the nodal basis of $V_h$, and let $\vec{U}_l \in \bR^D$ be the vector representation of  the solution function $U_l(\vec{x})$. Then the implementation of the method can be done by the following algorithm:
\begin{algorithm}
	\label{a-basic}
	\begin{algorithmic}
		\STATE (a) Set $\vec{U}_{0}=\tlambda^{-\alpha}\vec{F}$;
		\STATE(b) {For $l=0,1,\ldots,L:$}
		\INDSTATE[1.5] (i) For $i=1,\cdots,m$, solve for $\vec{U}_{l+1}^i$: \ 
		$
		\vec{U}_{l+1}^i=\mbR_l^i\,\vec{U}_l;
		$
		\INDSTATE[1.5] (ii) Set $\vec{U}_{l+1}=\beta_0 \vec{U}_l+\sum_{i=1}^m \beta_i \vec{U}_{l+1}^i$;
		\STATE (c) end.
	\end{algorithmic}
\end{algorithm}

\noindent It is also worth to point out that step (i) can be done parallelly.
\section{Total Error}\label{Sec-4}
In this section we shall present the error between $\cLg^{-\alpha}f$ and $\cLgh^{-\alpha}f_h$ and then the total error follows.  Different from  the Euclidean case,  the error analysis  on manifolds involves geometric error which makes it much complicated. However, it is natural to  believe the error bound for manifolds should consist with the Euclidean case given in \cite[Theorem 4.3]{bonito2015numerical}. Furthermore, it is worth to point out that our parametric FE scheme is different from the one proposed in \cite{bonito2021approximation} and compared with the error bound in Euclidean case, a $\ln h^{-1}$ factor gets involved in their estimate. Hence, new error analysis to derive the optimal  estimate is needed.

To this end, we first introduce some lemmas which will be used frequently in our later analysis. And  to simplify the  notations we denote $\|\cdot\|^2=(\cdot,\cdot)_{\cM}$  and $\|\cdot\|_h^2=(\cdot,\cdot)_{\cM_h}$.  Hereafter, we always use $c$ as a constant independent of $h$ and $\mu$(we will introduce later) which may be different in different places. We further suppose that $\cM_h$ is quasi-uniform and the number of simplices sharing the same vertex is uniformly bounded(see \cite[Fig.1]{bonito2019posteriori} for the illustration of the later assumption).

\begin{lemma}[see Lemma 4.2 in \cite{dziuk2013finite}]\label{equi-le}
	Let $\eta:\cM_h\rightarrow \bR$ with lift $\eta^\ell: \cM\rightarrow \bR$ and let $K$ be any simplex in $\cK$.  Denote $\tilde{K}=\{\theta(\vec{x})| \vec{x}\in K\}$ with $\theta$ the lift function given in \eqref{eq-lift}, then
	\begin{equation*}
		 \|\eta\|_{L^2(K)}\sim \|\eta^\ell\|_{L^2(\tilde{K})}\quad \mbox{and  }\|\ngh\eta\|_{L^2(K)} \sim \|\ng\eta^\ell\|_{L^2(\tilde{K})}
	\end{equation*}
where by ``$\sim$'' we mean the numbers on both sides of it can be bounded by each other up to a constant independent of $h$. 
\end{lemma}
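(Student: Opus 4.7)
The plan is to use the lift map $\theta: \cM_h \to \cM$ as a change of variables, exploiting that for $h$ sufficiently small $\theta|_K$ is a $C^2$-diffeomorphism onto $\tilde{K}$ whose Jacobian is uniformly close to $1$. Writing $\vnu^{-\ell}$ for the pullback of the outward normal of $\cM$ and $\vnu_h$ for the piecewise constant outward normal of $\cM_h$, the $C^3$-smoothness of $\cM$ together with quasi-uniformity of $\cM_h$ yields the standard estimates $\|d\|_{L^\infty(\cM_h)} \le c h^2$ and $\|\vnu^{-\ell} - \vnu_h\|_{L^\infty(\cM_h)} \le c h$, which are the workhorses of the argument.

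First I would establish the $L^2$-equivalence via the change of variables $\vec{x} \mapsto \theta(\vec{x})$, which gives
\begin{equation*}
\|\eta^\ell\|_{L^2(\tilde{K})}^2 = \int_K |\eta(\vec{x})|^2 \mu_h(\vec{x}) \, dK,
\end{equation*}
where $\mu_h$ is the surface-area ratio. An explicit computation using the Weingarten map of $\cM$ and the signed distance $d$ yields $\mu_h = (1-d\kappa_1)(1-d\kappa_2)\, \vnu_h^T \vnu^{-\ell}$ with $\kappa_1,\kappa_2$ the principal curvatures of $\cM$, which are bounded by the $C^3$-smoothness. Since $d=O(h^2)$ and $\vnu_h^T \vnu^{-\ell}=1+O(h^2)$, we get $|\mu_h-1|\le ch^2$; for $h$ small, $\mu_h$ is pinched between two positive constants independent of $K$ and $h$, and the first equivalence follows.

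Next, for the gradient equivalence, applying the chain rule to $\eta^\ell\circ \theta = \eta$ together with the tangential projections $P = \mbI - \vnu^{-\ell}\otimes \vnu^{-\ell}$ and $P_h = \mbI - \vnu_h\otimes \vnu_h$ yields the pointwise relation
\begin{equation*}
\ngh \eta(\vec{x}) = \ng \eta^\ell(\theta(\vec{x}))\, A_h(\vec{x}),
\end{equation*}
where $A_h = P_h(P - d H) P$ acts between the two tangent planes ($H$ the Weingarten map). The identity $P_h P = P + O(h)$ and $dH = O(h^2)$ show that $A_h$ is an $O(h)$-perturbation of the natural (orthogonal) identification between $T_{\vec{x}}\cM_h$ and $T_{\theta(\vec{x})}\cM$. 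Hence for $h$ small enough $A_h$ is invertible on the appropriate tangent space, and both $\|A_h\|_{L^\infty}$ and $\|A_h^{-1}\|_{L^\infty}$ are bounded uniformly in $K$ and $h$. Combining this pointwise two-sided bound with the area-ratio bound from the previous step gives the desired equivalence of gradient norms.

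The main obstacle I anticipate is purely bookkeeping: the projections $P$ and $P_h$ act on distinct tangent planes, so one must show $A_h$ is invertible as a map between these planes (not merely bounded as a matrix). This is handled by working in a common ambient frame in the tubular neighborhood of $\cM$ and using that the angle between the two tangent planes is $O(h)$, which is a direct consequence of the normal estimate above. Once that pointwise invertibility is confirmed, the equivalences reduce to uniform $L^\infty$ bounds on $\mu_h$, $A_h$, and $A_h^{-1}$, all of which follow from the $C^3$-regularity of $\cM$ and the quasi-uniformity of $\cM_h$.
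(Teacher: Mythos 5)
Your proposal is correct and follows essentially the same route as the paper's source for this lemma: the paper does not prove the statement itself but cites Lemma 4.2 of Dziuk--Elliott, whose proof is exactly your argument via the change of variables $\theta$, the area-ratio formula $\mu_h=(1-d\kappa_1)(1-d\kappa_2)\,\vnu_h\cdot\vnu$ with $|1-\mu_h|\le ch^2$, and the tangential chain rule with the uniformly invertible map built from $P$, $P_h$, $d$ and the Weingarten map. Nothing further is needed.
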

Square both sides of the two equations in Lemma \ref{equi-le} and sum over $\cK$, then by  taking square root we immediately get $\|\eta\|_h \sim \|\eta^\ell\|$ and $\|\ngh\eta\|_h \sim \|\ng\eta^\ell\|$. Interpolating the two results one immediately obtains
\begin{equation}\label{equi-norm}
	\|\eta\|_{H^\gamma(\cM_h)}\sim \|\eta^\ell\|_{H^\gamma(\cM)}\quad \mbox{for }\forall \gamma\in [0,1].
\end{equation}

For any $\eta\in L^2(\cM_h)$ with vanishing mean value, $\int_\cM \eta^\ell d\cM=0$ is not guaranteed.  To close this gap, we define the following lift operation: for $\eta\in L^2(\cM_h)$ with $\int_{\cM_h} \eta d\cM_h=0$
\begin{equation}\label{new-lift}
	\eta^{\nell}=\eta^\ell-\overline{\eta^{\ell}}\quad \mbox{with }\overline{\eta^{\ell}}=\frac{1}{|\cM|}\int_{\cM} \eta^\ell d\cM.
\end{equation}
Obviously the difference between $\eta^{\nell}$ and $\eta^{\ell}$ is the constant $\overline{\eta^{\ell}}$ which can be bounded by $ch^2$. In fact
\begin{equation}\label{error-nlift}
	\begin{aligned}
		\left| \overline{\eta^{\ell}}\right|=\frac{1}{|\cM|}\left| \int_{\cM} \eta^\ell d\cM-\int_{\cM_h} \eta d\cM_h\right|&=\frac{1}{|\cM|}\left| \int_{\cM} \eta^\ell (1-\sqrt{|g|^{-1}}) d\cM\right|\\
		&\le \|1-\sqrt{|g|^{-1}}\|_{L^\infty(\cM)} \|\eta^\ell\|\\
		&\le ch^2\|\eta^\ell\|\sim ch^2\|\eta\|_h
	\end{aligned}
\end{equation}
where $|g|$ is the determinant of the Riemannian metric $g$, and in the last line we utilized $\|1-\sqrt{|g|^{-1}}\|_{L^\infty(\cM)}\le c h^2$(see,  \cite[Lemma 4.1]{dziuk2013finite}). So applying  $	\eta^\ell=\eta^{\nell}+\overline{\eta^{\ell}}$ and \eqref{error-nlift}  one can verify \eqref{equi-norm}  still holds for the lift operation we defined above, say, for $\eta\in H^\gamma(\cM)$ with vanishing mean value
\begin{equation}\label{equi-norm-new}
	\|\eta\|_{H^\gamma(\cM_h)}\sim \|\eta^{\nell}\|_{H^\gamma(\cM)}\quad \mbox{for }\forall \gamma\in [0,1].
\end{equation}

Corresponding to the lift operation we defined above, for $v\in L^2(\cM)$ with $\int_{\cM} vd\cM=0$ we define
\begin{equation}\label{nlift-inv}
	v^{-\nell}=v^{-\ell}-\overline{v^{-\ell}},\quad \mbox{with }\overline{v^{-\ell}}=\frac{1}{|\cM_h|}\int_{\cM_h} v^{-\ell} d\cM_h,
\end{equation}
then one can easily check  $\int_{\cM_h}v^{-\nell}d\cM_h=0$. Furthermore, for $v\in L^2(\cM)$ with vanishing mean value on $\cM$, we have $(v^{-\nell})^{\nell}=v$. In fact, let $\overline{(\cdot)}$ represent the average value of $(\cdot)$ on $\cM$ or $\cM_h$(depending on the domain of $(\cdot)$), then by definition and utilizing the fact that $\overline{v^{-\ell}}$ is a constant it follows
\begin{equation}\label{inverse-nlift}
\begin{aligned}
	(v^{-\nell})^{\nell}=(v^{-\ell}-\overline{v^{-\ell}})^{\nell}&=(v^{-\ell}-\overline{v^{-\ell}})^{\ell}-\overline{(v^{-\ell}-\overline{v^{-\ell}})^{\ell}}\\
	&=v-\overline{v^{-\ell}}-\overline{(v-\overline{v^{-\ell}})}\\
	&=v-\overline{v^{-\ell}}-(\overline{v}-\overline{v^{-\ell}})=v
\end{aligned}
\end{equation}
where we employed $(v^{-\ell})^\ell=v$ and $\overline{v}=0$. Similarly, one can verify $(\eta^{\nell})^{-\nell}=\eta$ for $\eta\in L^2(\cM_h)$ with $\overline{\eta}=0$. Besides, corresponding to  \eqref{error-nlift} there holds 
\begin{equation}\label{error-inv-nlift}
	\left|\overline{v^{-\ell}}\right|\le ch^2\|v^{-\ell}\|_h\sim ch^2\|v\|.
\end{equation}

\begin{lemma}\label{diff-inner}
	For any $w_h,v_h\in V_h$ with corresponding lifts $w_h^\ell$ and $v_h^\ell$, the following bounds hold:
	\begin{equation*}
		\left|(w_h,v_h)_{\cM_h}-(w_h^\ell,v_h^\ell)_{\cM}\right|\le ch^2\|w_h^\ell\| \|v_h^\ell\|,
	\end{equation*}
and
	\begin{equation*}
	\left|(\ngh w_h,\ngh v_h)_{\cM_h}-(\ng w_h^\ell,\ng v_h^\ell)_{\cM}\right|\le ch^2\|\ng w_h^\ell\| \|\ng v_h^\ell\|.
\end{equation*}
In particular, for $w_h,v_h\in V_h$ with vanishing mean values on $\cM_h$, the inequalities above still hold for $w_h^{\nell}$ and $v_h^{\nell}$ instead of $w_h^{\ell}$ and $v_h^{\ell}$.
\end{lemma}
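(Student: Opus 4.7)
\smallskip
\noindent\textbf{Proof plan.} My strategy is to pull every $\cM$-integral back to $\cM_h$ via the lift $\theta$, isolate the geometric error as a multiplicative factor $(1-J)$ in the Jacobian (or as an analogous matrix perturbation for the gradient case), and exploit the classical surface-FEM fact that this factor is $\mathcal{O}(h^2)$ in $L^\infty(\cM_h)$. The routine closing step in each bound is Cauchy--Schwarz on $\cM_h$ followed by the norm equivalence of Lemma \ref{equi-le}.

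For the first (mass-type) inequality I would write
\[
(w_h^\ell,v_h^\ell)_\cM=\int_{\cM_h} w_h\, v_h\, J\, d\cM_h,
\]
where $J$ is the area-element ratio induced by the change of variables. The estimate $\|1-J\|_{L^\infty(\cM_h)}\le c h^2$ is already recorded in the computation leading to \eqref{error-nlift} (via \cite[Lemma 4.1]{dziuk2013finite}). Subtracting $(w_h,v_h)_{\cM_h}$, applying Cauchy--Schwarz, and invoking Lemma \ref{equi-le} to pass from $\|w_h\|_h$ to $\|w_h^\ell\|$ yields the bound.

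For the gradient inequality, I would invoke the standard tangential change-of-variable formula for surface gradients between $\cM$ and $\cM_h$ (as developed in \cite{dziuk2013finite}), which produces a symmetric matrix field $A_h$ on $\cM_h$ such that
\[
(\ng w_h^\ell,\ng v_h^\ell)_\cM=\int_{\cM_h}(A_h\ngh w_h)\cdot\ngh v_h\, d\cM_h,\qquad \|A_h-P_h\|_{L^\infty(\cM_h)}\le c h^2,
\]
where $P_h$ denotes the orthogonal projection onto the tangent plane of $\cM_h$ (so $P_h\ngh v_h=\ngh v_h$). The rest of the argument mirrors the $L^2$ case. I expect this to be the main obstacle: the $\mathcal{O}(h^2)$ control on $A_h$ is a geometric computation that uses the $C^3$-smoothness of $\cM$ through \eqref{eq-lift} and the fact that the signed distance $d(\vec{x})$ is itself $\mathcal{O}(h^2)$ on $\cM_h$, and pinning down the constant is what distinguishes the surface from the Euclidean setting.

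For the $\nell$-version, the gradient bound transfers verbatim because $\ng w_h^{\nell}=\ng w_h^\ell$ (the correction is a constant). For the $L^2$ bound, expanding $w_h^{\nell}=w_h^\ell-\overline{w_h^\ell}$ and using $\int_\cM w_h^{\nell}\, d\cM=0$ produces
\[
(w_h^{\nell},v_h^{\nell})_\cM=(w_h^\ell,v_h^\ell)_\cM-|\cM|\,\overline{w_h^\ell}\,\overline{v_h^\ell},
\]
so the $\ell$-version already proved, combined with the $c h^2$ control on each mean coming from \eqref{error-nlift}, bounds the left-hand side by $c h^2\|w_h^\ell\|\|v_h^\ell\|$. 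Replacing $\|w_h^\ell\|$ by $\|w_h^{\nell}\|$ is then automatic from the orthogonal decomposition $\|w_h^\ell\|^2=\|w_h^{\nell}\|^2+|\cM|\,\overline{w_h^\ell}^{\,2}$ for $h$ small enough.
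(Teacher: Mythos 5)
Your proposal is correct and follows essentially the same route as the paper: the paper simply cites \cite[Lemma 4.7]{dziuk2013finite} for the two main bounds, and the Jacobian/tangential-matrix perturbation argument you sketch is precisely the content of that cited proof. Your treatment of the $\nell$-version (shifting by the means, controlling them via \eqref{error-nlift}, and absorbing the norm change for $h$ small) also matches the paper's argument.
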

\begin{proof}
	See Lemma 4.7 in \cite{dziuk2013finite}. The case of $w_h^{\nell}$ and $v_h^{\nell}$ follows from the fact $w_h^\ell=w_h^{\nell}+\overline{w_h^\ell}$ and $v_h^\ell=v_h^{\nell}+\overline{v_h^\ell}$ with $\overline{w_h^\ell},\overline{v_h^\ell}$ the corresponding mean values of $w_h^\ell,v_h^\ell$ on $\cM$, which can be  bounded by \eqref{error-nlift}.
\end{proof}

Let $\pi_h$ denote the $L^2$-orthogonal projection which maps $\forall \eta\in L^2(\cM_h)$ onto $\cP(\cM_h)\cap H^1(\cM_h)$, given by 
\begin{equation}\label{proj}
	(\eta-\pi_h \eta, v_h)_{\cM_h}=0, \quad \mbox{for }\forall v_h\in \cP(\cM_h)\cap H^1(\cM_h).
\end{equation}
For the $L^2$-projection we have the following stability and error estimate, as the Euclidean case.
\begin{lemma}[Proposition 4.4 in \cite{bonito2021approximation}]\label{bonito}
	The $L^2$-projection is both $L^2(\cM_h)$ and $H^1(\cM_h)$ stable, and in particular for $\eta\in H^r(\cM_h)$ with $r\in [0,1]$, there holds
	\begin{equation}\label{H-stability}
		\|\pi_h \eta\|_{H^r(\cM_h)}\le c \|\eta\|_{H^r(\cM_h)}.
	\end{equation}
Consequently for $r,\gamma\in [0,1]$ and $v\in H^{r+\gamma}(\cM)$ we have 
	\begin{equation}\label{L2-est-1}
		\|v^{-\ell}-\pi_h v^{-\ell}\|_{H^r(\cM_h)}\le c h^\gamma\|v\|_{H^{r+\gamma}(\cM)}.
	\end{equation}
\end{lemma}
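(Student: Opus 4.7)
The plan is to establish the proposition in three stages: first the $L^2(\cM_h)$-stability, then the $H^1(\cM_h)$-stability, then real interpolation between the two, and finally combine the stability with a standard Scott--Zhang/Clément quasi-interpolant to obtain the error bound \eqref{L2-est-1}. The $L^2$-stability is immediate, since $\pi_h$ is an orthogonal projection in $L^2(\cM_h)$, so $\|\pi_h\eta\|_h\le \|\eta\|_h$ by the very definition \eqref{proj}.

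For the $H^1$-stability, I would introduce a quasi-interpolation operator $I_h:H^1(\cM_h)\to V_h$ of Scott--Zhang or Clément type, whose construction on a quasi-uniform, uniformly-valent surface mesh is standard, satisfying both the $H^1$-stability $\|\ngh I_h\eta\|_h\le c\|\ngh\eta\|_h$ and the first-order approximation $\|\eta-I_h\eta\|_h\le c h\|\ngh\eta\|_h$. Writing $\pi_h\eta=I_h\eta+(\pi_h\eta-I_h\eta)$, using the inverse inequality on the finite-element space together with the best-approximation property of $\pi_h$ in $L^2(\cM_h)$ gives
\[
\|\ngh(\pi_h\eta-I_h\eta)\|_h\le c h^{-1}\|\pi_h\eta-I_h\eta\|_h\le c h^{-1}\|\eta-I_h\eta\|_h\le c\|\ngh\eta\|_h,
\]
so that $\|\ngh\pi_h\eta\|_h\le c\|\ngh\eta\|_h$. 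The bound \eqref{H-stability} for intermediate $r\in(0,1)$ follows by real interpolation between $L^2(\cM_h)$ and $H^1(\cM_h)$.

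For \eqref{L2-est-1}, I would exploit that $\pi_h$ fixes $V_h$: for any $w_h\in V_h$,
\[
v^{-\ell}-\pi_h v^{-\ell}=(I-\pi_h)(v^{-\ell}-w_h),
\]
so the $H^r$-stability just proved yields
\[
\|v^{-\ell}-\pi_h v^{-\ell}\|_{H^r(\cM_h)}\le c\|v^{-\ell}-w_h\|_{H^r(\cM_h)}.
\]
Taking $w_h=I_h v^{-\ell}$ and invoking the approximation estimate
\[
\|v^{-\ell}-I_h v^{-\ell}\|_{H^r(\cM_h)}\le c h^{\gamma}\|v^{-\ell}\|_{H^{r+\gamma}(\cM_h)},\qquad r,\gamma\in[0,1],
\]
which itself comes by interpolating between the case $\gamma=0$ (stability of $I_h$) and $\gamma=1$ (standard first-order bound on a shape-regular surface mesh), and combining with the norm equivalence \eqref{equi-norm}, one arrives at the desired bound.

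The main obstacle is the $H^1$-stability of $\pi_h$: one needs a quasi-interpolant $I_h$ on $\cM_h$ with uniform stability and approximation constants under only quasi-uniformity and bounded valence, and the inverse inequality on $V_h$ must be immune to geometric distortion induced by the lift. Once these local surface-FEM ingredients are in place, the remainder of the argument is a routine transcription of the Euclidean Bramble--Pasciak--Scott strategy, and the explicit statement proved in \cite{bonito2021approximation} may be cited directly.
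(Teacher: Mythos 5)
The paper does not actually prove this lemma: it is imported verbatim as Proposition 4.4 of \cite{bonito2021approximation}, and the text moves straight on to its consequences such as \eqref{nell-ell}. Your proposal therefore supplies an argument where the paper supplies only a citation. The route you take is the classical Bramble--Pasciak--Steinbach/Crouzeix--Thom\'ee one: orthogonality for the $L^2$ bound, a Cl\'ement/Scott--Zhang quasi-interpolant combined with the inverse inequality for the $H^1$ bound, real interpolation for intermediate $r$, and the identity $v^{-\ell}-\pi_h v^{-\ell}=(I-\pi_h)(v^{-\ell}-w_h)$ for the error estimate. This is in substance the proof the cited reference gives on surface meshes, so you have reconstructed the intended argument rather than found a new one; the quasi-uniformity and bounded-valence assumptions recorded at the start of Section \ref{Sec-4} are precisely what make the global inverse inequality and the quasi-interpolation constants uniform, so the structural steps are all sound.

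One point deserves more care than your sketch gives it. For $r+\gamma>1$ the target bound \eqref{L2-est-1} is phrased in terms of $\|v\|_{H^{r+\gamma}(\cM)}$, but the intermediate quantity $\|v^{-\ell}\|_{H^{r+\gamma}(\cM_h)}$ produced by your interpolation argument is not covered by the norm equivalence \eqref{equi-norm}, which is stated only for exponents in $[0,1]$; indeed a global $H^2$ norm on the piecewise-flat surface $\cM_h$ is not the right object, since $v^{-\ell}$ is in general only elementwise $H^2$ across the edges of the triangulation. The endpoint case $\gamma=1$, $r=1$ must therefore be run elementwise, using the broken norm $\bigl(\sum_{K\in\cK}\|v^{-\ell}\|_{H^2(K)}^2\bigr)^{1/2}$ together with the elementwise bound $\|v^{-\ell}\|_{H^2(K)}\le c\,\|v\|_{H^2(\tilde K)}$, which follows from the $C^2$ regularity of the lift $\theta$ guaranteed by the $C^3$ smoothness of $\cM$. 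This is a routine repair, but as written your chain of inequalities passes through a norm that has not been defined, and it is exactly the kind of geometric bookkeeping that the cited Proposition 4.4 exists to handle.
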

The projection error \eqref{L2-est-1} also holds for $v^{-\nell}$(in the case of $\int_{\cM} vd\cM=0$) since  
\begin{equation}\label{nell-ell}
	v^{-\nell}-\pi_h v^{-\nell}=v^{-\ell}-\pi_h v^{-\ell}.
\end{equation}
Taking $v_h=1$ in \eqref{proj} we obtain that $\int_{\cM_h}\eta d\cM_h=0$ implies $\int_{\cM_h}{\pi_h \eta}d\cM_h=0$. 

To unify the analysis, we define
\begin{equation}\label{lift-new}
	\begin{aligned}
		\eta^{\tell},v^{-\tell}=\left\{
		\begin{matrix}
			\eta^{\ell}, v^{-\ell}  & \mbox{if } \Gamma\neq \emptyset, \mbox{ or }\Gamma=\emptyset, b(\vec{x})\not\equiv 0 \\
			\eta^{\nell},v^{-\nell}& \mbox{if } \Gamma= \emptyset 	\mbox{ and }b(\vec{x})\equiv 0
		\end{matrix}
		\right. .
	\end{aligned}
\end{equation}
For $\forall v\in H^r(\cM)$, taking $\eta=v^{-\tell}$ in \eqref{H-stability} and utilizing \eqref{equi-norm} or \eqref{equi-norm-new} it follows  that for $r\in [0,1]$
\begin{equation}\label{H-stability-1}
	\|(\pi_h v^{-\tell})^\tell\|_{H^r(\cM)}\le c \|v\|_{H^r(\cM)}, \quad v \in H^r(\cM)
\end{equation}
where we implicitly utilized $(v^{-\tell})^\tell=v$. Applying \eqref{equi-norm} or \eqref{equi-norm-new} for the projection error  we can obtain for $r,\gamma\in [0,1]$
\begin{equation}\label{L2-est-2}
	\|v-(\pi_h v^{-\tell})^\tell\|_{H^r(\cM)}\le c h^\gamma\|v\|_{H^{r+\gamma}(\cM)}, \quad \forall v\in H^{r+\gamma}(\cM).
\end{equation}

Now we turn to the main error analysis. Appealing to Balakrishnan formula we know
\begin{equation}\label{Bala}
	u=\frac{\sin(\pi\alpha)}{\pi}\int_{0}^{\infty}\mu^{-\alpha}(\mu\cI +\cLg)^{-1}f\, d\mu,
\end{equation}
and 
\begin{equation}\label{Bala-h}
	u_h=\frac{\sin(\pi\alpha)}{\pi}\int_{0}^{\infty}\mu^{-\alpha}(\mu\cI +\cLg)^{-1}f_h\, d\mu.
\end{equation}
So to compare the error between $u$ and $u_h$, we introduce the auxiliary problem: for $f\in \dot{\mH}^\delta(\cM)$, find $w_\mu\in \mH^{1}(\cM)$ such that for $\forall v\in \mH^1(\cM)$
\begin{equation}\label{eqn-wmu}
	\langle \cLg w_\mu+ \mu w_\mu,v\rangle_\cM=\langle  f,v\rangle_{\cM}:=F(v), \quad \mbox{with }  \mu\ge 0.
\end{equation}
Hereafter we use $a_{\mu}(\cdot,\cdot)$ to represent the bilinear form corresponding to \eqref{eqn-wmu}. Note that $\mH^\sigma(\cM)$ coincides with $\dot{\mH}^\sigma(\cM)$ with equivalent norms for $\sigma\in[0,2]$, one can obtain
\begin{equation*}
	\|w_\mu\|_{H^{\sigma}(\cM)} \sim \|w_\mu\|_{\dot{H}^{\sigma}(\cM)}= \left\|\cLg^{\sigma/2} (\cLg +\mu\cI)^{-1}f\right\|\quad \mbox{for }\sigma\in[0,2].
\end{equation*}
Notice that for any scalar $\lambda\ge \lambda_1>0$ and $\mu\neq 0$, 
\begin{equation*}
	\frac{\lambda^{s}}{\lambda+\mu}=\mu^{s-1}\frac{(\lambda/\mu)^s}{\lambda/\mu +1}\le \min\{\mu^{s-1},\lambda_1^{s-1}\}\le c(\mu+1)^{s-1}\quad \mbox{for } \forall s\in[0,1].
\end{equation*} 
So expanding  $\cLg^{\sigma/2} (\cLg +\mu\cI)^{-1}f$ by mutually orthogonal eigenfunctions it follows for $f\in \dot{\mH}^{\delta}(\cM)$ and $\sigma\in [0,2]$ 
\begin{equation}\label{H-delta}
\|w_\mu\|_{H^{\sigma}(\cM)} \sim\|w_\mu\|_{\dot{H}^{\sigma}(\cM)}\le  c \left\{
\begin{matrix}
	&{(\mu+1)^{(\sigma-\delta)/2-1}} \|f\|_{\dot{H}^\delta(\cM)}, & \quad \sigma\ge \delta,\\
	&{(\mu+1)^{-1}} \|f\|_{\dot{H}^\delta(\cM)},& \quad \sigma< \delta.
\end{matrix}
\right. 
\end{equation}
Correspondingly we denote $w_{h,\mu}\in V_h$ as the solution of 
\begin{equation}\label{eqn-whmu}
	\cLgh w_{h,\mu} +\mu w_{h,\mu} =f_h, 
\end{equation}
with $f_h\in V_h$ an appropriate approximation of $f$ we shall clarify later.
And we let $a_{h,\mu}(\cdot,\cdot)$ denote the bilinear form corresponding to \eqref{eqn-whmu}. Testing \eqref{eqn-whmu} by $w_{h,\mu}$ one can get
\begin{equation}\label{Hh-s}
	\|w_{h,\mu}\|_h\le c (\mu+1)^{-1}\|f_h\|_h, \quad \mbox{and }\|w_{h,\mu}\|_{H^1(\cM_h)}\le c (\mu+1)^{-1/2}\|f_h\|_h.
\end{equation}

\subsection{Estimate for $w_\mu-w^{\tell}_{h,\mu}$ in $H^1(\cM)$}
Let $\|\cdot\|_\mu^2:=a_\mu(\cdot,\cdot)$ and $\|\cdot\|_{h,\mu}^2:=a_{h,\mu}(\cdot,\cdot)$. Consider the difference between $a_\mu(\cdot,\cdot)$ and $a_{h,\mu}(\cdot,\cdot)$, then the following lemma holds. 

\begin{lemma}\label{est-a-ah}
	Suppose $\cLgh$ is given by \eqref{paraFEM-2} and $a(\vec{x}),b(\vec{x})\in C^{\kappa}(\cM)$, then for $\forall w_h,v_h\in V_h$ and $\kappa\in[0,2]$ it holds that
	\begin{equation}\label{a-ah-r}
		\begin{aligned}
			\left|a_\mu(w_{h}^{\tell},v_h^{\tell})-a_{h,\mu}(w_{h},v_h)\right|&\le ch^\kappa\left( \|\ng  w_h^{\tell}\|+(\mu+1)^{1/2}\|w_h^{\tell}\| \right)\|v_h^{\tell}\|_\mu.
		\end{aligned}
	\end{equation}
\end{lemma}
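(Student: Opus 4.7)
The plan is to split the discrepancy along the three summands of the bilinear form and then, for each summand, to peel off a coefficient-interpolation error from a geometric (manifold-approximation) error. Concretely, I write
\begin{align*}
a_\mu(w_h^{\tell},v_h^{\tell})-a_{h,\mu}(w_h,v_h)
&= \Big[\int_\cM a\,\ng w_h^{\tell}(\ng v_h^{\tell})^T d\cM - \int_{\cM_h} a_h\,\ngh w_h(\ngh v_h)^T d\cM_h\Big]\\
&\quad{}+\Big[\int_\cM b\,w_h^{\tell}v_h^{\tell}\,d\cM - \int_{\cM_h} b_h\,w_h v_h\,d\cM_h\Big]\\
&\quad{}+\mu\Big[\int_\cM w_h^{\tell}v_h^{\tell}\,d\cM - \int_{\cM_h} w_h v_h\,d\cM_h\Big]
\end{align*}
and bound each bracket separately by $ch^\kappa$ times the appropriate product of norms.

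For the stiffness and reaction brackets I add and subtract the lifted nodal coefficient $a_h^\ell$ (resp.\ $b_h^\ell$). The piece $\int_\cM (a-a_h^\ell)\ng w_h^{\tell}(\ng v_h^{\tell})^T d\cM$ is controlled by $\|a-a_h^\ell\|_{L^\infty(\cM)}\|\ng w_h^{\tell}\|\|\ng v_h^{\tell}\|$; the bound $\|a-a_h^\ell\|_{L^\infty(\cM)}\le ch^\kappa\|a\|_{C^\kappa(\cM)}$ holds trivially at $\kappa=0$ (by $L^\infty$-stability of $I_h$ and of the lift) and at $\kappa=2$ it is the classical nodal interpolation error transported by the $C^2$ map $\theta$, and real interpolation of $C^\kappa=[C^0,C^2]_{\kappa/2,\infty}$ then covers every $\kappa\in[0,2]$. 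The remaining piece $\int_\cM a_h^\ell\ng w_h^{\tell}(\ng v_h^{\tell})^T d\cM-\int_{\cM_h}a_h\ngh w_h(\ngh v_h)^T d\cM_h$ is pure geometric error, handled by a routine variable-weight extension of Lemma \ref{diff-inner} together with $\|a_h\|_{L^\infty}\le\bar a$, producing $ch^2\|\ng w_h^{\tell}\|\|\ng v_h^{\tell}\|\le ch^\kappa\|\ng w_h^{\tell}\|\|\ng v_h^{\tell}\|$ since $\kappa\le 2$ and $h<1$. The identical argument with $b,b_h$ yields $ch^\kappa\|w_h^{\tell}\|\|v_h^{\tell}\|$, while the $\mu$-bracket is purely geometric and Lemma \ref{diff-inner} directly gives $\le c\mu h^2\|w_h^{\tell}\|\|v_h^{\tell}\|$.

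To collapse these three bounds to the right-hand side of \eqref{a-ah-r}, I use the coercivity of $a_\mu$ together with the Poincar\'e inequality on $\mH^1(\cM)$ to obtain $\|v_h^{\tell}\|_\mu\ge c\|\ng v_h^{\tell}\|$ and $\|v_h^{\tell}\|_\mu\ge c(\mu+1)^{1/2}\|v_h^{\tell}\|$ (the second split into $\mu\le 1$, for which Poincar\'e suffices, and $\mu\ge 1$, for which the mass part of $\|\cdot\|_\mu$ does). The stiffness bound then fits into $ch^\kappa\|\ng w_h^{\tell}\|\,\|v_h^{\tell}\|_\mu$; the reaction bound is absorbed since $(\mu+1)^{1/2}\ge 1$; and the mass bound is rewritten as $h^2\sqrt\mu\,\|w_h^{\tell}\|\cdot\sqrt\mu\,\|v_h^{\tell}\|\le ch^\kappa(\mu+1)^{1/2}\|w_h^{\tell}\|\,\|v_h^{\tell}\|_\mu$. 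Summing the three gives the claim. In the Neumann/no-reaction case $\tell$ stands for $\nell$ and the additive constants $\overline{w_h^\ell},\overline{v_h^\ell}$ cancel in all the inner products, a case already covered by Lemma \ref{diff-inner}.

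The principal obstacle I anticipate is the fractional-regularity interpolation bound $\|a-a_h^\ell\|_{L^\infty(\cM)}\le ch^\kappa\|a\|_{C^\kappa(\cM)}$: one has to view the map $v\mapsto v-(I_h v^{-\ell})^\ell$ as a linear operator on $C^0(\cM)$ which is bounded uniformly in $h$, and on $C^2(\cM)$ has norm $\le ch^2$, and then invoke real interpolation on $[C^0,C^2]_{\kappa/2,\infty}$; the $C^3$-smoothness of $\cM$ (hence of $\theta$) is exactly what ensures that the $C^2$-norm survives the lift. The variable-weight extension of Lemma \ref{diff-inner} is a routine modification -- the bounded weight factors through $L^\infty$ without disturbing the $h^2$ control on $1-\sqrt{|g|^{-1}}$ or on the discrepancy between surface gradients -- but it should be stated explicitly before being invoked.
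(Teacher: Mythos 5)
Your proposal follows essentially the same route as the paper: both split the discrepancy into a coefficient-interpolation error, bounded by $ch^\kappa\|\cdot\|_{C^\kappa(\cM)}$ via real interpolation between the $C^0$-stability and the $C^2$-error of the lifted nodal interpolant, plus a purely geometric error of order $h^2$ obtained from a weighted version of Lemma \ref{diff-inner} (the paper's \eqref{a-2}--\eqref{a-3}, derived by repeating the proof of Dziuk--Elliott's Lemma 4.7). The only differences are organizational: the paper folds the $\mu$-mass term into the weighted geometric estimate \eqref{a-3} rather than treating it separately, and reads the $\|v_h^{\tell}\|_\mu$-weighted right-hand side directly off the weighted norms $\|(a_h^\ell)^{1/2}\ng v_h^{\tell}\|$ and $\|(b_h^\ell+\mu)^{1/2}v_h^{\tell}\|$ instead of passing through the explicit coercivity/Poincar\'e step you spell out.
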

\begin{proof}
	Let $\eta(\vec{x})$ represent $a(\vec{x})$ or $b(\vec{x})$, then it follows
	\begin{equation}\label{inter-eta}
		\|\eta-(I_h\eta^{-\ell})^\ell\|_{L^\infty(\cM)}\le c h^\kappa\|\eta\|_{C^{\kappa}(\cM)}.
	\end{equation}
	In fact, note that $I_h$ is piecewise linear, then it holds the stability $\|I_h\|_{C^{0}(K)\hookrightarrow L^\infty(K)}\le 1$ and the error estimate $\|\cI-I_h\|_{C^{2}(K)\hookrightarrow L^\infty(K)}\le ch_K^2$ on each $K\in\cK$. Denote $\tilde{K}=\{\theta(\vec{x})| \vec{x}\in K\}$, then by the chain rule(see \cite{dziuk1988finite} for the details) one can easily get $\|(\cdot)^{-\ell}\|_{C^2(K)}\le c \|(\cdot)\|_{C^2(\tilde{K})}$. Thus, lifting the stability result and the error estimate onto $\tilde{K}$ and taking the supremum over all $\tilde{K}$ then we can get \eqref{inter-eta} by interpolation. 

By transplanting the integral domain from $\cM_h$ to $\cM$ and repeating the proof of  \cite[Lemma 4.7]{dziuk2013finite} one can obtain the following estimates
\begin{equation}\label{a-2}
	\begin{aligned}
		&\left|\int_{\cM}a_h^{\ell}(\vec{x})\ng w_h^{\tell}(\ng v_h^{\tell})^Td\cM-\int_{\cM_h}a_h(\vec{x})\ngh w_h(\ngh v_h)^T d\cM_h\right|\\
		\le& c\left(\max_{\vec{x}\in \cM} a_h^\ell\right)^{1/2} h^2 \|\ng w_h^{\tell}\| \|(a^\ell_h)^{1/2}\ng v_h^{\tell}\|
	\end{aligned}
\end{equation}
and
\begin{equation}\label{a-3}
	\begin{aligned}
		&\left|\int_{\cM}(b_h^\ell+\mu) w_h^\tell v_h^{\tell} d\cM-\int_{\cM_h}(b_h+\mu) w_h v_h d\cM_h\right|\\
		\le &c\left(\mu+\max_{\vec{x}\in \cM}b_h^\ell\right)^{1/2}\, h^2 \|w_h^{\tell}\| \| (b_h^\ell+\mu)^{1/2}v_h^{\tell}\|.
	\end{aligned}
\end{equation}
Split $a(\vec{x})$ and $b(\vec{x})$ in $a_\mu(w_h^\tell,v_h^\tell)$ into $a(\vec{x})=(a(\vec{x})-a_h^\ell)+a_h^\ell$ and $b(\vec{x})=(b(\vec{x})-b_h^\ell)+b_h^\ell$, respectively. Then the desired estimate follows from triangle inequality, \eqref{inter-eta}--\eqref{a-3} and the fact that $\max_{\vec{x}\in\cM} a_h^\ell\le \bar{a}$, $\max_{\vec{x}\in\cM} b_h^\ell\le \bar{b}.$
\end{proof}
	\begin{remark}
		Replace $a_h^\ell(\vec{x})$, $a_h(\vec{x})$ in \eqref{a-2} and $b_h^\ell(\vec{x})$, $b_h(\vec{x})$ in \eqref{a-3} by $a(\vec{x})$, $a^{-\ell}(\vec{x})$ and $b(\vec{x})$, $b^{-\ell}(\vec{x})$, respectively, then one can directly get the estimate for $\cLgh$ given by \eqref{paraFEM-1}, where the only difference is that the term $h^\kappa$ is replaced by $h^2$. So we shall adopt \eqref{a-ah-r} to give  further analysis and one can get the error of \eqref{paraFEM-1} by just replacing $\kappa$ by $2$.
	\end{remark}

 Set $\e_{h,\mu}=w_{h,\mu}-z_h$ with $z_h=\pi_h (w_\mu^{-\tell})$, then 
\begin{equation}\label{eq-ehmu}
	\begin{aligned}
		\|\e_{h,\mu}\|_{h,\mu}^2&=a_{h,\mu}(w_{h,\mu},\e_{h,\mu})-a_{h,\mu}(z_h,\e_{h,\mu})\\
		&=F_h(\e_{h,\mu})-F(\e^{\tell}_{h,\mu})+F(\e^{\tell}_{h,\mu})-a_{h,\mu}(z_h,\e_{h,\mu})\\
		&=[F_h(\e_{h,\mu})-F(\e^{\tell}_{h,\mu})]+a_{\mu}(w_\mu-z_h^{\tell},\e^{\tell}_{h,\mu}) +[a_{\mu}(z_h^{\tell},\e^{\tell}_{h,\mu})-a_{h,\mu}(z_h,\e_{h,\mu})].
	\end{aligned}
\end{equation}
Recalling the definition of $a_\mu(\cdot,\cdot)$,  by Cauchy-Schwarz inequality we obtain
\begin{equation*}
	\begin{aligned}
		|a_{\mu}(w_\mu-z_h^{\tell},\e^{\tell}_{h,\mu})|&\le \left|\left(a(\vec{x})\ng (w_\mu-z_h^{\tell}), \ng \e^{\tell}_{h,\mu}\right)_{\cM}\right|+\left|\left((b+\mu)(w_\mu-z_h^{\tell}), \e^{\tell}_{h,\mu}\right)_\cM\right|\\
		&\le c \left(\|\ng (w_\mu-z_h^{\tell})\|+(\bar{b}+\mu)^{1/2}\|w_\mu-z_h^{\tell}\|\right)\|\e^{\tell}_{h,\mu}\|_\mu.
	\end{aligned}
\end{equation*}
Appealing to \eqref{L2-est-2} we get for $\gamma\in [0,1]$ and $\gamma'\in [0,2]$
\begin{equation}\label{term-2}
	\begin{aligned}
		|a_{\mu}(w_\mu-z_h^{\tell},\e^{\tell}_{h,\mu})|
		&\le c \left(h^\gamma\|w_\mu\|_{H^{1+\gamma}(\cM)}+(1+\mu)^{1/2}h^{\gamma'}\|w_\mu\|_{H^{\gamma'}(\cM)} \right)\,\|\e^{\tell}_{h,\mu}\|_\mu.
	\end{aligned}
\end{equation}
Taking  $w_h=z_h$ and $v_h =\e_{h,\mu}$ in \eqref{a-ah-r}, inserting the obtained inequality and \eqref{term-2} into \eqref{eq-ehmu}, we get for $\gamma\in [0,1]$ and $\gamma'\in[0,2]$
\begin{equation}\label{est-ehmul}
	\begin{aligned}
		\|\e_{h,\mu}^{\tell}\|_{\mu}\le& c\left(h^\gamma\|w_\mu\|_{H^{1+\gamma}(\cM)}+(1+\mu)^{1/2}h^{\gamma'}\|w_\mu\|_{H^{\gamma'}(\cM)} \right)\\
		&+ch^\kappa\left(\|w_\mu\|_{H^{1}(\cM)}+(1+\mu)^{1/2}\|w_\mu\|_{L^2(\cM)} \right) +r(f,f_h)
	\end{aligned}
\end{equation}
where 
\begin{equation*}
	r(f,f_h)=\frac{\left|F_h(\e_{h,\mu})-F(\e^{\tell}_{h,\mu})\right|}{\|\e_{h,\mu}^{\tell}\|_{\mu}}.
\end{equation*}
One can choose $f_h\in V_h$ such that 
\begin{equation}\label{f_h}
	\int_{\cM_h} f_h v_h d\cM_h=\int_{\cM} f v_h^{\tell} d\cM, \quad \mbox{for }\forall v_h\in V_h.
\end{equation}
This is the way choosing $f_h$ in \cite{bonito2021approximation} which vanishes $r(f,f_h)$. Here we make the following assumptions on $f_h$:
\begin{assumption}\label{assump-f}
	For given $f\in \dot{\mH}^\delta(\cM)$, we assume that $f_h\in V_h$ satisfies the stability condition $\|f_h\|_h\le c \|f\|_{\dot{H}^\delta(\cM)}$ and processes the approximation property
		\begin{equation}\label{appro-f}
			\|f_h^{\tell}-f\|_{H^{-1}(\cM)}\le ch^{1+\delta}\|f\|_{\dot{H}^{\delta}(\cM)},\quad 0\le \delta\le 1.
		\end{equation}
\end{assumption}
\begin{remark}
	Obviously, $f_h=\pi_hf^{-\tell}$ satisfies the assumption above. In fact, Assumption \ref{assump-f} is sufficient for the later analysis but it is not necessary. The analysis hereafter will show that the approximation property of $f_h$ has effect only on  $\bar{R}_4$ in \eqref{est-L2-emu}. In some special situation, for instance if $f\in \mH^2(\cM)$, then 
	\begin{equation*}
		\|f_h^{\tell}-f\| \le ch^{2}\|f\|_{\dot{H}^{2}(\cM)}
	\end{equation*}
is enough to derive the desired estimates as well. That is, one can obtain $f_h$ by interpolation. For the interpolation error one may refer to \cite[Lemma 4.3]{dziuk2013finite}.
\end{remark}
Using triangle inequality, H\"older inequality and \eqref{appro-f} we obtain
\begin{equation}\label{est-F}
\begin{aligned}
		\left|F_h(\e_{h,\mu})-F(\e^{\tell}_{h,\mu})\right|&\le\left|(f_h,\e_{h,\mu})_{\cM_h}-(f_h^{\tell},\e^{\tell}_{h,\mu})_{\cM}\right|+\left|(f_h^{\tell},\e^{\tell}_{h,\mu})_{\cM}-(f,\e_{h,\mu}^{\tell})_{\cM}\right|\\
		&\le  c \left(h^2 \|f_h^{\tell}\|_{H^{-1}(\cM)}+h^{1+\delta}\|f\|_{\dot{H}^{\delta}(\cM)}\right) \|\e_{h,\mu}^{\tell}\|_{H^1(\cM)}
\end{aligned}
\end{equation}
where the first term of the last line follows from
\begin{equation*}
	(f_h,\e_{h,\mu})_{\cM_h}-(f_h^{\tell},\e^{\tell}_{h,\mu})_{\cM}=\int_{\cM}f_h^{\tell} e^{\tell}_{h,\mu}(\sqrt{|g|^{-1}}-1) d\cM.
\end{equation*}
Again recalling that  $\|1-\sqrt{|g|}\|_{L^\infty(\cM_h)}\le ch^2$ we obtain
\begin{equation}\label{est-rf}
	r(f,f_h)=\frac{\left|F_h(\e_{h,\mu})-F(\e^{\tell}_{h,\mu})\right|}{\|\e_{h,\mu}^{\tell}\|_{\mu}}\le c(\mu+1)^{-1/2}h^{1+\delta}\|f\|_{{\dot{H}}^{\delta}(\cM)}
\end{equation}
where we used $\|f_h^{\tell}\|_{H^{-1}(\cM)}\le c\|f\|$ which can be obtained from  \eqref{appro-f}. Apply  triangle inequality 
$$\|w_\mu -w_{h,\mu}^{\tell}\|_{\mu}:=
\|e_\mu\|_\mu  \le \|w_\mu-z_h^{\tell}\|_\mu +\|\bar{e}_{h,\mu}^{\tell}\|_\mu.$$
The first part on the right hand side is the projection error which can be bounded by  \eqref{L2-est-2}. So appealing to \eqref{est-ehmul} and \eqref{est-rf}, and utilizing the regularity estimate \eqref{H-delta} we get for $f\in \dot{\mH}^\delta(\cM)$, $\gamma\in [0,1]$ with $\gamma+1\ge \delta$ and $\gamma'\in[\delta,2]$
\begin{equation}\label{est-ehmu-1}
\begin{aligned}
	\|e_\mu\|_{\mu}\le& c h^\gamma (\mu+1)^{(\gamma-\delta-1)/2}\|f\|_{\dot{H}^{\delta}(\cM)}+c h^{\gamma'} (\mu+1)^{(\gamma'-\delta-1)/2}\|f\|_{\dot{H}^{\delta}(\cM)}\\
	& +ch^{\kappa_\delta}(\mu+1)^{-1/2}\|f\|_{\dot{H}^{\delta}(\cM)},
\end{aligned}
\end{equation}
where $\kappa_\delta=\min\{\kappa,1+\delta,2\}$.
\subsection{Estimate for $w_\mu-w^{\tell}_{h,\mu}$ in $L^2(\cM)$}
 We shall use the Aubin-Nitsche trick to derive the $L^2(\cM)$-error. Towards this end,  consider finding $y_\mu\in \mH^1(\cM)$ such that for $\forall v\in \mH^1(\cM)$ it holds
\begin{equation} \label{eqn-dual}
	a_\mu(y_\mu,v)=( e_\mu,v)_\cM,\quad\mbox{with }e_\mu=w_\mu-w^{\tell}_{h,\mu},\quad  \mu\ge0.
\end{equation}
Similar to \eqref{H-delta} we can get for $\sigma\in [0,2]$
\begin{equation}\label{H-delta-y}
	\|y_\mu\|_{H^{\sigma}(\cM)}\le c (\mu+1)^{\sigma/2-1}\|e_\mu\|.
\end{equation}
Denote $y_{h,u}=\pi_h( y_\mu^{-\tell})$ and recall $z_h=\pi_h (w_{\mu}^{-\tell})$. Test \eqref{eqn-dual} by $e_\mu$  we obtain
\begin{equation}\label{L2-ehmu}
	\begin{aligned}
		\|e_{\mu}\|^2=&a_\mu(e_\mu,y_\mu)=a_\mu(e_\mu,y_\mu-y_{h,\mu}^\tell)+a_\mu(e_\mu,y_{h,\mu}^\tell)\\
		=&a_\mu(e_\mu,y_\mu-y_{h,\mu}^\tell) +a_\mu( w_\mu-w_{h,\mu}^\tell,y_{h,\mu}^\tell)\\
		=&a_\mu(e_\mu,y_\mu-y_{h,\mu}^\tell) +\left[F(y_{h,\mu}^\tell)-F_h(y_{h,\mu})\right]\\
		&+\left[a_{h,\mu}(w_{h,\mu},y_{h,\mu})-a_\mu(w_{h,\mu}^\tell,y_{h,\mu}^\tell)\right].
	\end{aligned}
\end{equation}
For the first term on the right hand side of \eqref{L2-ehmu}, appealing to \eqref{L2-est-1}  we obtain for $\tgamma\in[0,1]$
\begin{equation*}
\begin{aligned}
	|a_\mu(e_\mu,y_\mu-y_{h,\mu}^\tell)|&\le \|e_\mu\|_\mu \|y_{\mu}-y_{h,\mu}^\tell\|_{\mu} \\
	&\le c \|e_\mu\|_{\mu} \left(h^\tgamma\|y_\mu\|_{H^{1+\tgamma}(\cM)}+(\mu+1)^{1/2}h^\tgamma\| y_\mu\|_{H^\tgamma(\cM)}\right).
\end{aligned}
\end{equation*}
 Appealing to the regularity of $y_\mu$, we immediately obtain $\tgamma\in[0,1]$
\begin{equation}\label{RHS-1}
	|a_\mu(e_{\mu},y_\mu-y_{h,\mu}^\tell)|\le c h^\tgamma(\mu+1)^{(\tgamma-1)/2} \|e_\mu\|_\mu \|e_\mu\|.
\end{equation}
Recalling Lemma \ref{diff-inner}, \eqref{H-stability-1} together with the fact that $y_{h,\mu}=\pi_h y_\mu^{-\tell}$ it follows
\begin{equation}\label{RHS-2}
	\left|F(y_{h,\mu}^\tell)-F_h(y_{h,\mu})\right|\le c h^2\|y_{h,\mu}^\tell\| \|f\|\le c h^2\|y_{\mu}\| \|f\|\le c h^2(\mu+1)^{-1}\|f\| \|e_\mu\|.
\end{equation}
Appealing to \eqref{a-ah-r} and  \eqref{H-stability-1}, together with the regularity pickups for $y_{\mu}$ and $w_{h,\mu}$ we get
\begin{equation}\label{RHS-3}
	\begin{aligned}
		&\left|a_{h,\mu}(w_{h,\mu},y_{h,\mu})-a_\mu(w_{h,\mu}^\tell,y_{h,\mu}^\tell)\right|\le c h^{\min(\kappa,2)}(\mu+1)^{-1}\|f\|_{\dot{H}^\delta(\cM)} \|e_\mu\|
	\end{aligned}
\end{equation}
where we utilized the stability assumption on $f_h$ in Assumption \ref{assump-f}. Inserting \eqref{RHS-1}, \eqref{RHS-2} and \eqref{RHS-3} into \eqref{L2-ehmu} and appealing to \eqref{est-ehmu-1} we arrive at
	\begin{equation}\label{est-L2-emu}
	\begin{aligned}
		\|e_\mu\|\le& c h^{\min(\kappa,2)}(\mu+1)^{-1} \|f\|_{\dot{H}^\delta(\cM)} +ch^\tgamma (\mu+1)^{(\tgamma-1)/2}\|e_\mu\|_\mu \\
		\le&  ch^{\min(\kappa,2)}(\mu+1)^{-1} \|f\|_{\dot{H}^\delta(\cM)}+ ch^{\gamma+\tgamma} (\mu+1)^{(\gamma+\tgamma-\delta)/2-1} \|f\|_{\dot{H}^\delta(\cM)} \\
		&+ch^{\gamma'+\tgamma} (\mu+1)^{(\gamma'+\tgamma-\delta)/2-1} \|f\|_{\dot{H}^\delta(\cM)}
		+ ch^{\tgamma+\kappa_\delta}(\mu+1)^{\tgamma/2-1}  \|f\|_{\dot{H}^{\delta}(\cM)}\\
		:=&\bar{R}_1 +\bar{R}_2 +\bar{R}_3+\bar{R}_4
	\end{aligned}
	\end{equation}
where $\gamma,\tgamma\in [0,1]$ with $\gamma+1\ge \delta$ and $\gamma'\in[\delta,2]$.
The analysis above implies the following theorem.
\begin{theorem}\label{The-est-h}
	Suppose $a(\vec{x})\in H^1(\cM)\cap C^{\kappa}(\cM)$, $b(\vec{x})\in C^{\kappa}(\cM)$, $f\in \dot{\mH}^{\delta}(\cM)$ and $f_h$ satisfies Assumption \ref{assump-f}, where $\delta\in[0,2]$ and $\kappa=\min\{2\alpha+\delta,2\}$. Let $u=\cLg^{-\alpha}f$ and $u_h=\cLgh^{-\alpha}f_h$, then for  $h$ small enough
	\begin{equation*}
		\|u-u_h^{\ell}\|\le 
		c \left\{
		\begin{matrix}
			 |2\alpha+\delta-2|^{-1}h^{\min(\delta+2\alpha,2)}\|f\|_{\dot{H}^{\delta}(\cM)}& \quad \alpha+\delta/2\neq 1,\\
			 h^{2}|\ln h|\|f\|_{\dot{H}^{\delta}(\cM)}& \quad \alpha+\delta/2= 1,
		\end{matrix}
	\right.
	\end{equation*}
with $c$ independent of $\alpha$ and $h$. 
\end{theorem}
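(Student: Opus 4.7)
The plan is to combine the Balakrishnan integral representation with the pointwise-in-$\mu$ bound \eqref{est-L2-emu} for the resolvent error, then evaluate the resulting $\mu$-integrals via the beta function. Subtracting \eqref{Bala} from \eqref{Bala-h} gives
\begin{equation*}
u-u_h^{\ell}=\frac{\sin(\pi\alpha)}{\pi}\int_0^\infty \mu^{-\alpha}\,e_\mu\,d\mu,
\end{equation*}
where $e_\mu=w_\mu-w_{h,\mu}^{\tell}$. Minkowski's inequality in $L^2(\cM)$ reduces the task to bounding $\frac{\sin(\pi\alpha)}{\pi}\int_0^\infty \mu^{-\alpha}\|e_\mu\|\,d\mu$.

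The workhorse identity is
\begin{equation*}
\frac{\sin(\pi\alpha)}{\pi}\int_0^\infty \mu^{-\alpha}(\mu+1)^{s-1}\,d\mu = \frac{\Gamma(\alpha-s)}{\Gamma(\alpha)\,\Gamma(1-s)}, \qquad 0\le s<\alpha,
\end{equation*}
obtained from the beta integral together with the reflection formula $\Gamma(\alpha)\Gamma(1-\alpha)=\pi/\sin(\pi\alpha)$; this rewriting is crucial because the right-hand side is manifestly bounded in $\alpha\in(0,1)$ away from $s=\alpha$, which takes care of the $\alpha\to 0^+$ and $\alpha\to 1^-$ uniformity. Inserting the four-term decomposition \eqref{est-L2-emu} of $\|e_\mu\|$, the $\bar R_1$-contribution has $s=0$, so the integral equals $1$ and already yields $ch^{\min(2\alpha+\delta,2)}\|f\|_{\dot{H}^\delta(\cM)}$ with an $\alpha$-independent constant.

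For $\bar R_2,\bar R_3,\bar R_4$ I will select the free parameters $\gamma\in[\max(0,\delta-1),1]$, $\gamma'\in[\delta,2]$, $\tgamma\in[0,1]$ to push the $h$-exponent as high as the convergence constraint allows. When $\alpha+\delta/2>1$ (so $2\alpha+\delta>2$) the choice $\gamma+\tgamma=2$ in $\bar R_2$ is admissible, with $s=1-\delta/2<\alpha$ and $\alpha-s=(2\alpha+\delta-2)/2$; the Gamma singularity $\Gamma(\alpha-s)\sim 2/(2\alpha+\delta-2)$ then produces exactly the prefactor $|2\alpha+\delta-2|^{-1}$ multiplying the $h^2$ rate. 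Parallel choices $\gamma'+\tgamma=2$ in $\bar R_3$ and $\tgamma+\kappa_\delta=2$ in $\bar R_4$ give contributions of the same or faster order. When $\alpha+\delta/2<1$ the constraint forces $\gamma+\tgamma<2\alpha+\delta$, $\gamma'+\tgamma<2\alpha+\delta$, and $\tgamma<2\alpha$; the leading rate $h^{2\alpha+\delta}$ is already delivered by $\bar R_1$, and admissible parameter tuples (e.g.\ $\tgamma=0$ in $\bar R_4$, so $\bar R_4$ contributes $h^{\kappa_\delta}$) give the remaining terms with prefactor bounded by $|2\alpha+\delta-2|^{-1}$. Finally, at the critical threshold $\alpha+\delta/2=1$ one has $s=\alpha$ exactly for the natural choice $\gamma+\tgamma=2$; perturbing by $\epsilon$, the factor $\Gamma(\alpha-s)/[\Gamma(\alpha)\Gamma(1-s)]$ behaves like $1/\epsilon$, and balancing $h^{-\epsilon}/\epsilon$ against $h^2$ by taking $\epsilon\sim 1/|\ln h|$ yields the logarithmic factor $h^2|\ln h|$.

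The main obstacle is the combinatorial parameter optimization: for every admissible $(\alpha,\delta)$ I must exhibit a tuple $(\gamma,\gamma',\tgamma)$ that simultaneously satisfies the admissibility constraints in \eqref{est-L2-emu} and yields the claimed rate with the claimed prefactor in each of $\bar R_2,\bar R_3,\bar R_4$. The subtler point is recovering precisely $|2\alpha+\delta-2|^{-1}$ rather than a spurious $(2\alpha-1)^{-1}$ artifact from the $\tgamma$ constraint in $\bar R_4$; this requires choosing $\tgamma=0$ in $\bar R_4$ (so the prefactor blow-up is transferred to $\bar R_2$ where the Gamma pole aligns with $2\alpha+\delta=2$) and carefully verifying that the resulting $h$-rate $h^{\kappa_\delta}$ is at least $h^{\min(2\alpha+\delta,2)}$ in every regime of $\alpha\le 1/2$ vs.\ $\alpha>1/2$ and $\delta\le 1$ vs.\ $\delta>1$.
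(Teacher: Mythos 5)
Your overall strategy (Balakrishnan representation plus the pointwise bound \eqref{est-L2-emu} and parameter optimization) is the same as the paper's, but the execution via a single closed-form beta integral over all of $(0,\infty)$ has a genuine gap in the sub-critical regime $2\alpha+\delta<2$. For the $\bar{R}_2$ term your identity requires $s=(\gamma+\tgamma-\delta)/2-1+1<\alpha$, i.e.\ $\gamma+\tgamma<2\alpha+\delta$, and the resulting $h$-power is exactly $h^{\gamma+\tgamma}$. So with one admissible tuple for the whole half-line you can only reach $h^{2\alpha+\delta-\epsilon}$, and pushing to the boundary costs $\Gamma(\alpha-s)=\Gamma(\epsilon/2)\sim 2/\epsilon$; optimizing $\epsilon\sim|\ln h|^{-1}$ leaves you with $h^{2\alpha+\delta}|\ln h|$. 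That contradicts the theorem, which asserts no logarithm whenever $\alpha+\delta/2\neq 1$ — and this regime is not exotic (it covers every $\alpha<1/2$ with $\delta$ small). Your sentence claiming that ``admissible parameter tuples give the remaining terms with prefactor bounded by $|2\alpha+\delta-2|^{-1}$'' is precisely the step that cannot be carried out: the blow-up of your Gamma factor occurs as $\gamma+\tgamma\to 2\alpha+\delta$ (distance zero from the target exponent), not as $2\alpha+\delta\to 2$, so the prefactor $|2\alpha+\delta-2|^{-1}$ never appears from this computation when $2\alpha+\delta<2$. The same objection applies to $\bar{R}_3$.

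The missing idea is to split the $\mu$-integral, which is what the paper does: $\int_0^\infty=\int_0^1+\int_1^{h^{-2}}+\int_{h^{-2}}^\infty$, with a \emph{different} tuple $(\gamma,\gamma',\tgamma)$ on each range. On the middle range one deliberately takes the ``inadmissible'' choice $\gamma=\tgamma=1$ (so the integrand behaves like $h^2\mu^{-\alpha-\delta/2}$, divergent at infinity when $\alpha+\delta/2<1$); the truncation at $\mu=h^{-2}$ then yields $h^2\cdot h^{2\alpha+\delta-2}/|1-\alpha-\delta/2|$, which is exactly the claimed rate with exactly the claimed prefactor $|2\alpha+\delta-2|^{-1}$, in both cases $\alpha+\delta/2\gtrless 1$. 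The tail $\int_{h^{-2}}^\infty$ is handled with a second tuple ($\gamma=\max(\delta-1,0)$, $\tgamma=0$, $\gamma'=\delta$) that makes it integrable with rate $h^{2\alpha+\delta}$ and a harmless $1/\alpha$ absorbed by $\sin(\pi\alpha)$. Your treatment of the supercritical case $2\alpha+\delta>2$ and of the critical case via $\epsilon\sim|\ln h|^{-1}$ is fine, as is the $\bar{R}_1$ computation, but without the range-dependent parameter selection the proof does not close.
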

\begin{proof}
 	Recalling \eqref{Bala} and \eqref{Bala-h} we know
	\begin{equation*}
		\|u-u_h^\tell\|\le \frac{\sin(\pi\alpha)}{\pi}\int_{0}^{\infty} \mu^{-\alpha} \|e_\mu\| d\mu \le \frac{\sin(\pi\alpha)}{\pi}\sum_{j=1}^{4}\int_{0}^{\infty} \mu^{-\alpha}  \bar{R}_j d\mu:=\frac{\sin(\pi\alpha)}{\pi}\sum_{j=1}^{4}I_j.
	\end{equation*}
  For the $I_1$, one can easily derive the upper bound $c_{\alpha,1} h^{\min(\kappa,2)} \|f\|_{\dot{H}^\delta(\cM)}$. And for $I_j(j=2,3,4)$ we split it into 
	\begin{equation}\label{I_2}
		I_j=\int_{0}^{1}\mu^{-\alpha}\bar{R}_j d\mu +\int_{1}^{h^{-2}}\mu^{-\alpha}\bar{R}_j d\mu +\int_{h^{-2}}^{\infty}\mu^{-\alpha}\bar{R}_j d\mu:=\sum_{k=1}^{3}I_{j,k}.
	\end{equation}
	For $I_{j,1}$ we set $\gamma=\tgamma=1$, $\gamma'=2$ then the bound $c_{\alpha,2}h^{\min(1+\kappa_\delta,2)} \|f\|_{\dot{H}^\delta(\cM)}$ follows. For $I_{2,2}$, if $\alpha+\delta/2\neq 1$ we take  $\gamma = \tgamma=1$,  then 
	\begin{equation*}
		I_{2,2}\le \frac{c}{|2\alpha+\delta-2|} h^{\delta+2\alpha}\|f\|_{\dot{H}^{\delta}(\cM)},
	\end{equation*}
and if $\alpha+\delta/2=1$, then by taking $\gamma=\tgamma=1-\epsilon/2$ it follows
\begin{equation*}
		I_{2,2}\le \frac{c}{\epsilon} h^{2-\epsilon}\|f\|_{\dot{H}^{\delta}(\cM)}.
\end{equation*}
For $I_{2,3}$ and $I_{3,3}$, we take $\gamma=\max(\delta-1,0)$, $\tgamma=0$, $\gamma'=\delta$ then
\begin{equation*}
	I_{j,3}\le c_{\alpha,3} h^{\delta+2\alpha}\|f\|_{\dot{H}^{\delta}(\cM)},\quad j=2,3.
\end{equation*}
For $I_{3,2}$, by taking $\gamma'=2,\tgamma=1$ we obtain
\begin{equation*}
	I_{3,2}\le \frac{c}{|3-2\alpha-\delta|} h^{\min(\delta+2\alpha,3)}\|f\|_{\dot{H}^{\delta}(\cM)},
\end{equation*}
and when $\delta+2\alpha>2$ we can use $\delta'=2-2\alpha$ instead of $\delta$ as the smoothness of $f$ to get the uniform bounded(to avoid $3-2\alpha-\delta\rightarrow 0$). So it follows
\begin{equation*}
	I_{3,2}\le c h^{\min(\delta+2\alpha,2)}\|f\|_{\dot{H}^{\delta}(\cM)},
\end{equation*}
Finally for $I_{4,2}$ and $I_{4,3}$, take  $\tgamma=\alpha$ then we arrive at 
	\begin{equation*}
		I_{4,2}+I_{4,3}\le c_{\alpha,4} h^{\alpha+\kappa_\delta}\|f\|_{\dot{H}^\delta(\cM)}.
	\end{equation*}
In fact, the constants $\{c_{\alpha,k}\},k=1,\cdots,4$ are dependent on $\alpha$ and may blow up for $\alpha\rightarrow 0^+$ and $1^-$. However, taking into account of the multiplier $\sin(\pi\alpha)$, trivial calculations show $c_{\alpha,k}\sin(\pi\alpha)\le c$ with $c$ independent of $\alpha$. Thus we end our proof by choose $\epsilon=|\ln(h)|^{-1}$ in the case of $\alpha+\delta/2=1$. 

Note that $u_h^\ell-u_h^\tell$ is either $0$ or a constant bounded by $ch^2\|u_h\|_h$(see \eqref{error-nlift}), so using the regularity of $u_h$ and Assumption \ref{assump-f}  we know $\|u_h\|_h\le \|f_h\|_{h}\le c\|f\|_{\dot{H}^\delta(\cM)}$. So the estimates for $\|u-u_h^{\ell}\|$ follow from triangle inequality $\|u-u_h^\ell\|\le \|u-u_h^\tell\|+\|u^\tell-u_h^\ell\|$. 
\end{proof}

Combining Theorem \ref{err-t} with Theorem \ref{The-est-h}, and applying triangle inequality together with $\|u_h -U_{L+1}\|_h \sim \|u_h^\ell -U^\ell_{L+1}\|,$ we arrive at the following Corollary:
\begin{corollary}
	Suppose the condition in Theorem \ref{The-est-h} is satisfied and the mesh $\{t_l\}_{l=0}^{L+1}$ is constructed by \eqref{mesh_t}. Denote by $N_s$ the number of total solves, then for $h$ small enough, the analytic solution $u$ and the numerical solution $U_{L+1}$ obtained from our scheme satisfy
	\begin{equation*}
		\|u-U_{L+1}^\ell\|\le c\left(\hat{c} \tlambda^{-\alpha}{32^{-\frac{N_s}{\lceil \log_2 (\lambda_{h,max}/\tlambda)\rceil}}}+c_hh^{\min(\delta+2\alpha,2)}\right) \|f\|_{\dot{H}^{\delta}(\cM)},
	\end{equation*}
	where $c$ is independent of $\alpha,h$,  and $\hat{c}$ is given in Corollary \ref{err-t-cor}, and $c_h=1$ if $\alpha+\delta/2\neq1$ or $c_h=|\ln h|$.
\end{corollary}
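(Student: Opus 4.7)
The plan is to derive this bound by a single application of the triangle inequality splitting the total error into a consistency error (between the continuous and discrete fractional solutions) and a rational-approximation error (between the discrete fractional solution and its Pad\'e surrogate), and then invoking the two main results already proved in the paper for each piece.

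First I would write
\begin{equation*}
\|u - U_{L+1}^\ell\| \;\le\; \|u - u_h^\ell\| \;+\; \|u_h^\ell - U_{L+1}^\ell\|.
\end{equation*}
For the first summand, Theorem \ref{The-est-h} gives directly
\begin{equation*}
\|u - u_h^\ell\| \;\le\; c\, c_h\, h^{\min(\delta+2\alpha,2)} \|f\|_{\dot{H}^\delta(\cM)},
\end{equation*}
with $c_h=1$ when $\alpha+\delta/2\neq 1$ and $c_h=|\ln h|$ in the borderline case, and with $c$ independent of $\alpha$ and $h$ thanks to the $\sin(\pi\alpha)$ bookkeeping carried out in the proof of that theorem.

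For the second summand, I would use the norm equivalence $\|u_h - U_{L+1}\|_h \sim \|u_h^\ell - U_{L+1}^\ell\|$ inherited from Lemma \ref{equi-le} (since $u_h - U_{L+1} \in V_h$), and then apply Corollary \ref{err-t-cor} with the geometric mesh \eqref{mesh_t}, giving
\begin{equation*}
\|u_h^\ell - U_{L+1}^\ell\| \;\le\; c\,\hat{c}\, \tlambda^{-\alpha}\, 32^{-N_s/\lceil \log_2(\lambda_{h,max}/\tlambda)\rceil}\, \|f_h\|_{L^2(\cM_h)}.
\end{equation*}
The factor $\|f_h\|_{L^2(\cM_h)}$ must then be replaced by $\|f\|_{\dot{H}^\delta(\cM)}$, which is exactly the stability bound $\|f_h\|_h \le c\|f\|_{\dot{H}^\delta(\cM)}$ demanded by Assumption \ref{assump-f}.

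Adding the two bounds yields the claim. There is no real obstacle here; the only subtlety worth double-checking is the equivalence between the two $L^2$ norms when passing from $\cM_h$ to $\cM$ for the difference $u_h - U_{L+1}$ (in particular in the pure-Neumann case where the lifts $u_h^\ell,U_{L+1}^\ell$ may fail to have vanishing mean, so one should either use the $\tell$-lift defined in \eqref{lift-new} together with \eqref{error-nlift} to absorb the $\mathcal{O}(h^2)$ constant into the spatial error term, or note that both $u_h$ and $U_{L+1}$ belong to $V_h$ so their difference satisfies the discrete compatibility and the straight lift works directly). Either way the discrepancy is of higher order and is absorbed in the constant $c$.
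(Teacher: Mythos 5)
Your proposal is correct and follows essentially the same route as the paper: a triangle inequality splitting into $\|u-u_h^\ell\|+\|u_h^\ell-U_{L+1}^\ell\|$, bounding the first term by Theorem \ref{The-est-h}, the second by the norm equivalence $\|u_h-U_{L+1}\|_h\sim\|u_h^\ell-U_{L+1}^\ell\|$ together with Corollary \ref{err-t-cor}, and then invoking the stability bound $\|f_h\|_h\le c\|f\|_{\dot H^\delta(\cM)}$ from Assumption \ref{assump-f}. Your extra care about the lift in the pure-Neumann case is a reasonable (and harmless) refinement that the paper leaves implicit.
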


\section{Numerical examples}
In this section we present numerical examples to verify the theoretical analysis and to show the  efficiency and robustness of our method. All the programs are run in MATLAB, and the meshes in Example \ref{2d-sphere} and Example \ref{torus} are generated by Gmsh. 

Our algorithm needs the knowledge of $\lambda_{h,min}$ and $\lambda_{h,max}$. It is known that $\lambda_{h,min}\ge \lambda_{min}$ with $\lambda_{min}$ the minimal eigenvalue of $\cLg$, and $\lambda_{h,max} \le \Lambda =ch^{-2}$ with $c$ sufficiently large. In all our tests, instead of $\lambda_{h,max}$ we use an upper bound $\Lambda$. However, it is preferable to use a good 
practically feasible bound $\Lambda$, which can be obtained by a few iterations of power method.  As Theorem \ref{err-t} shows, the error is proportional to the factor 
$\tlambda^{-\alpha}$, so it is always better to choose  $\tlambda$ as close as possible to $\lambda_{h,min}$. Generally one can set $\tlambda=\lambda_{min}$.

To implement the formulas of Algorithm \ref{a-basic} one needs the roots $t_1(-\alpha, \alpha) < \dots < t_m(-\alpha, \alpha)$
and $t_1(\alpha, -\alpha) < \dots < t_m(\alpha, -\alpha)$ of the Jacobi polynomials $J_m^{(-\alpha, \alpha)}(t) $ and $J_m^{(\alpha, -\alpha)}(t)$,
respectively. For example, for $m=1$ it is well known that $t_1(\alpha, -\alpha)=\frac12(1-\alpha)$ and $t_1(-\alpha, \alpha)=\frac12(1+\alpha)$.
For various $m$ and $\alpha$ the roots could be obtained by  directly computing the eigenvalues of a tridiagonal matrix \cite[pp.84]{shen2011spectral} or by the fast algorithm \texttt{jacpts} in \texttt{chebfun} for very large $m$ (see \cite{hale2013fast} for details).

\begin{example}\label{2d-Rec}
	To verify the robustness of our algorithm we  take the checkerboard problem on   domain $[-1,1]\times[-1,1]$ with homogeneous boundary for the first test, say, $\cM$ is a plane square. We apply a standard finite difference scheme with more than $10^6$ degree of freedom to discrete  $\cLg$ with $a(\vec{x})=1,b(\vec{x})=0$  and  
	\begin{equation*}
		f=\left\{ \begin{matrix}
			1,& \mbox{for } \ \ x_1x_2\ge0;\\
			-1& \mbox{for } \ \ x_1x_2<0.\\
		\end{matrix}
		\right.
	\end{equation*}
\end{example}
The mesh we use is geometrical refined around the boundary and along $x_1,x_2=0$:  we first divide each direction into $N_1=2N_0=1000$ intervals $\{I_k\}_{k=1}^{N_1}$, then refine $I_k$ with  $k=1,N_0,N_0+1,N_1$ by adding $p=12$ nodes in each of them, denoting by $\{x_{k}^n\}_{n=1}^p$ which are exponentially clustered correspondingly at $-1^+,0^{-},0^{+}$ and $1^-$ with a speed of $2^{-n}$. One can obtain $\Lambda\approx 3.355\times10^{13}$ and the total degree of freedom is $1047^2=1096209> 10^6$ for such $\cLgh$. It is also worth to point out that in each direction the ratio of the mesh size is ${h_{max}}/{h_{min}}=2^p=4096$. The two subplots in Fig.\ref{fig-2dRec-error-delta} show the results under $\tlambda=0.01$ and $\tlambda=\lfloor\frac{\pi^2}{2}\rfloor=4$ respectively, and the number of steps we need in temporal direction correspondingly is $L+1=43$ and $L+1=52$. One can observe that the results consist with our theoretical analysis. Fig.\ref{fig-2d-FDM} presents the numerical solution for $\alpha=0.01,\alpha=0.5$ and $\alpha=0.99$, respectively.
\begin{figure}[!htp]
	\centering
	\begin{tabular}{c c}
		\subfigure[$\tlambda=0.01$]{\includegraphics[width=0.40\textwidth,trim=140 210 160 230, clip]{./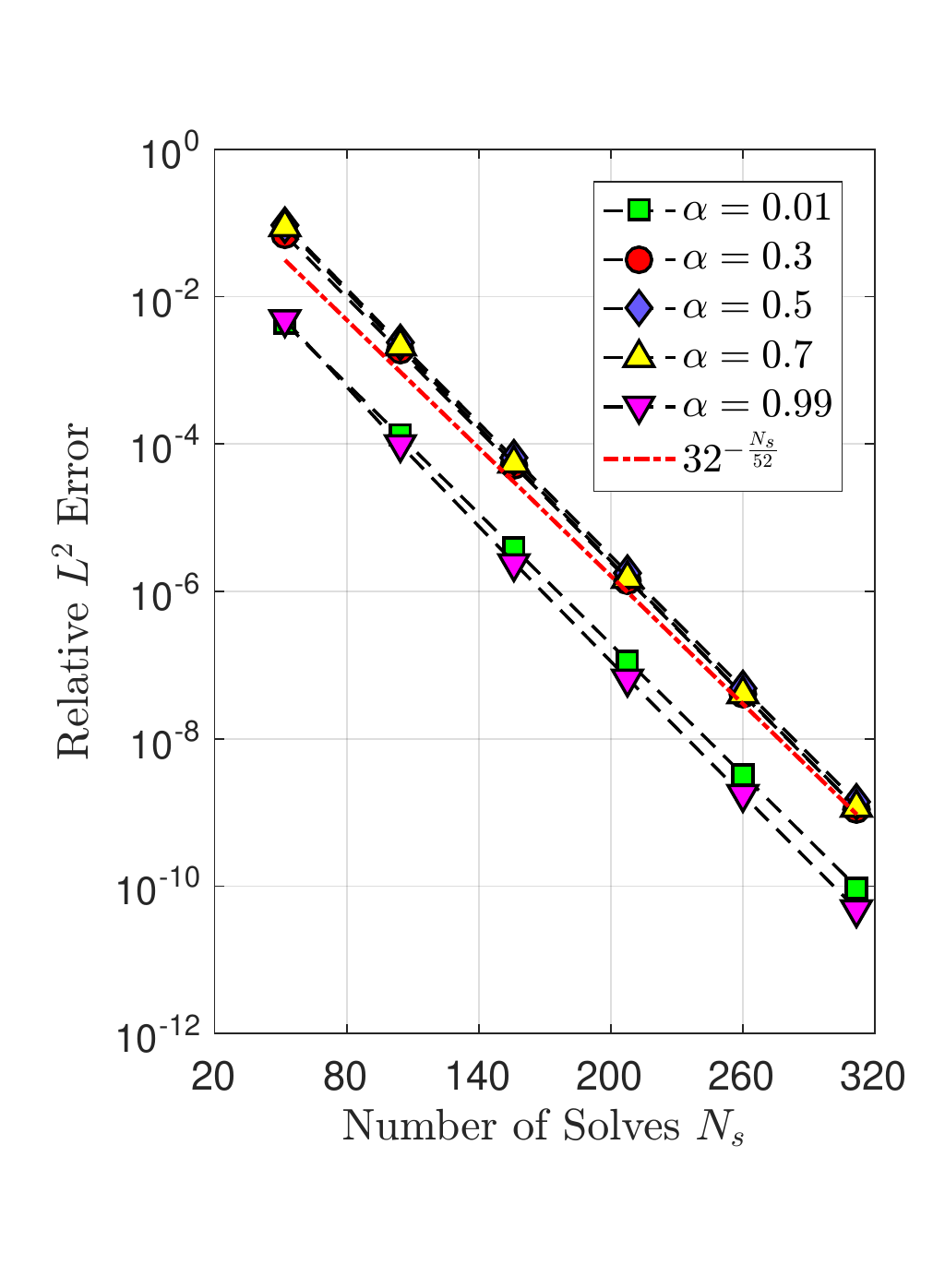}}&
		\subfigure[$\tlambda=4$]{\includegraphics[width=0.40\textwidth,trim=140 210 160 230, clip]{./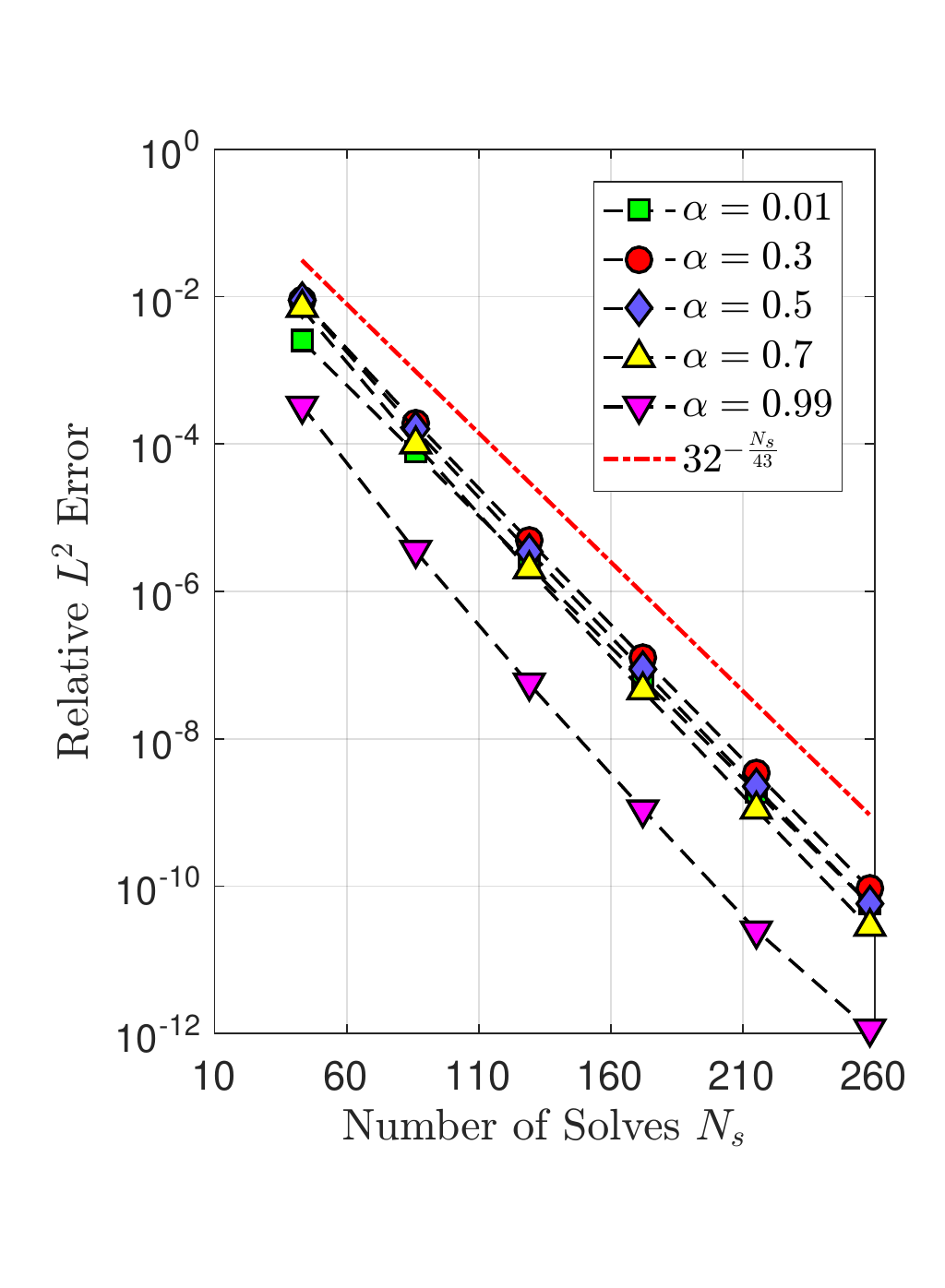}}
	\end{tabular}
	\caption{$\|u_h-U_{L+1}\|_h/\|u_h\|_h$ Example \ref{2d-Rec} for different $\tlambda$}
	\label{fig-2dRec-error-delta}
\end{figure}

\begin{figure}[!htp]
	\centering
	\begin{tabular}{c c c}
		\subfigure[$\alpha=0.01$]{\includegraphics[width=0.32\textwidth,trim=100 210 100 230, clip]{./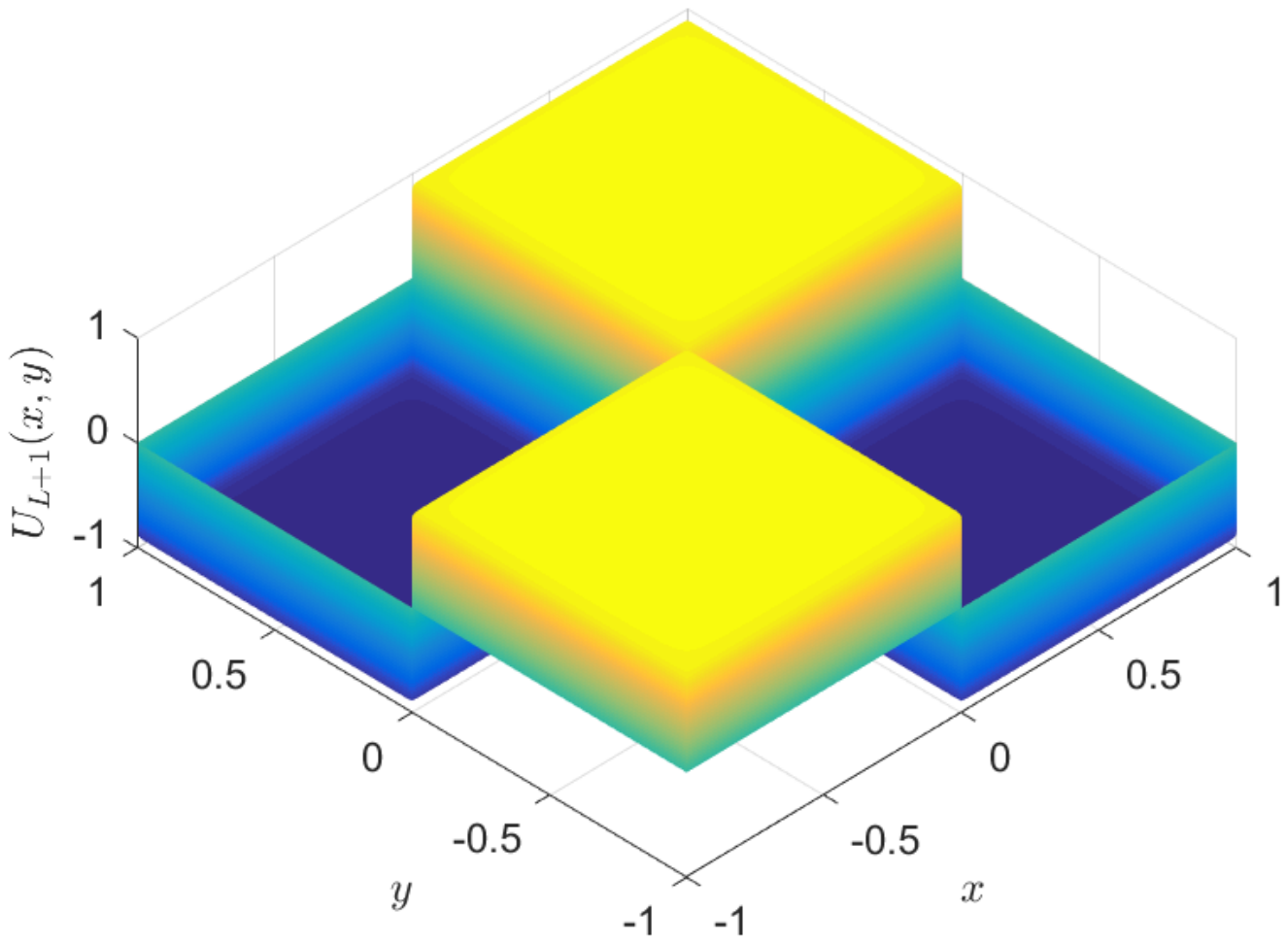}}&
		
		\subfigure[$\alpha=0.5$]{\includegraphics[width=0.32\textwidth,trim=100 210 100 230, clip]{./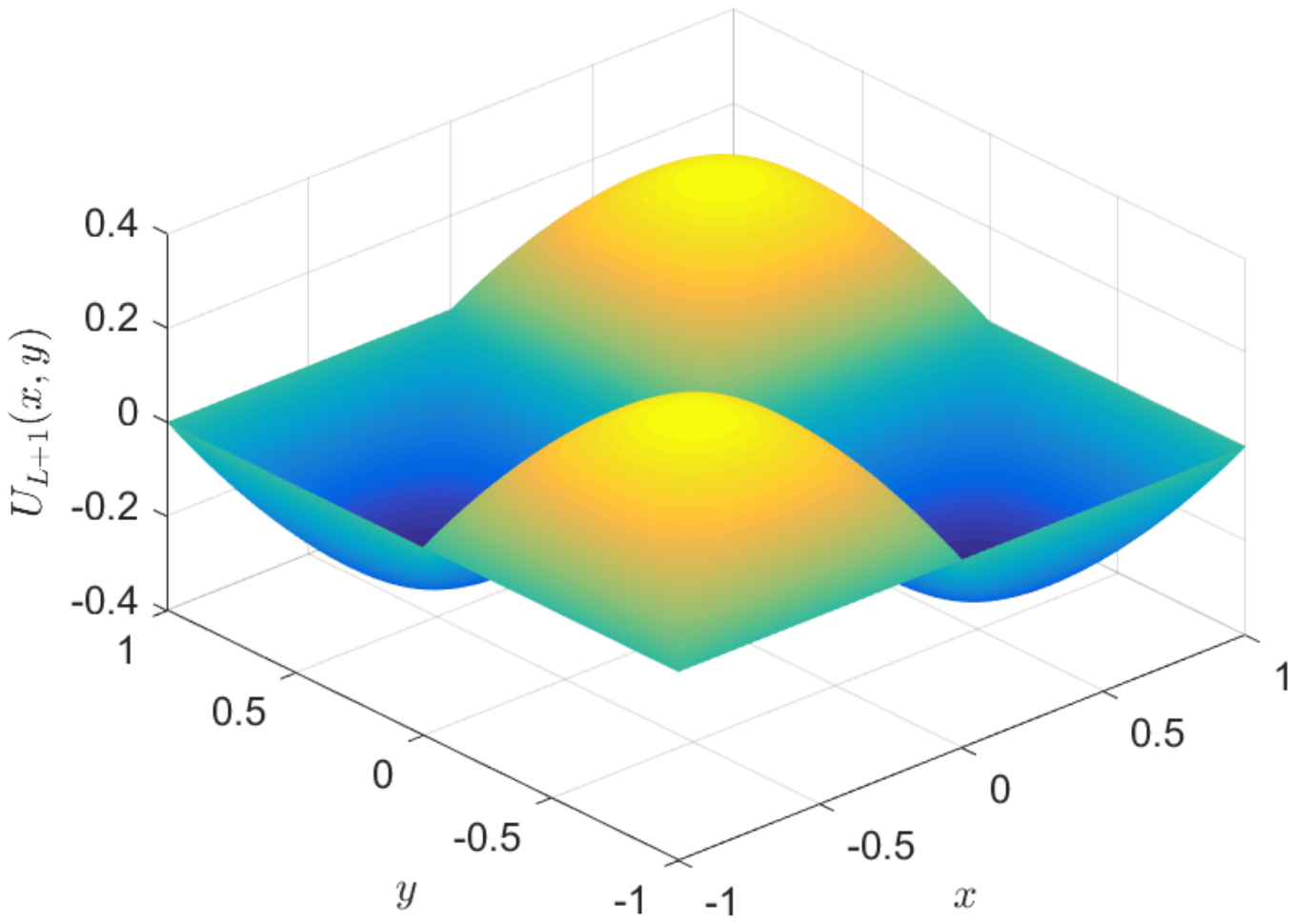}}&
		
		\subfigure[$\alpha=0.99$]{\includegraphics[width=0.32\textwidth,trim=100 210 100 230, clip]{./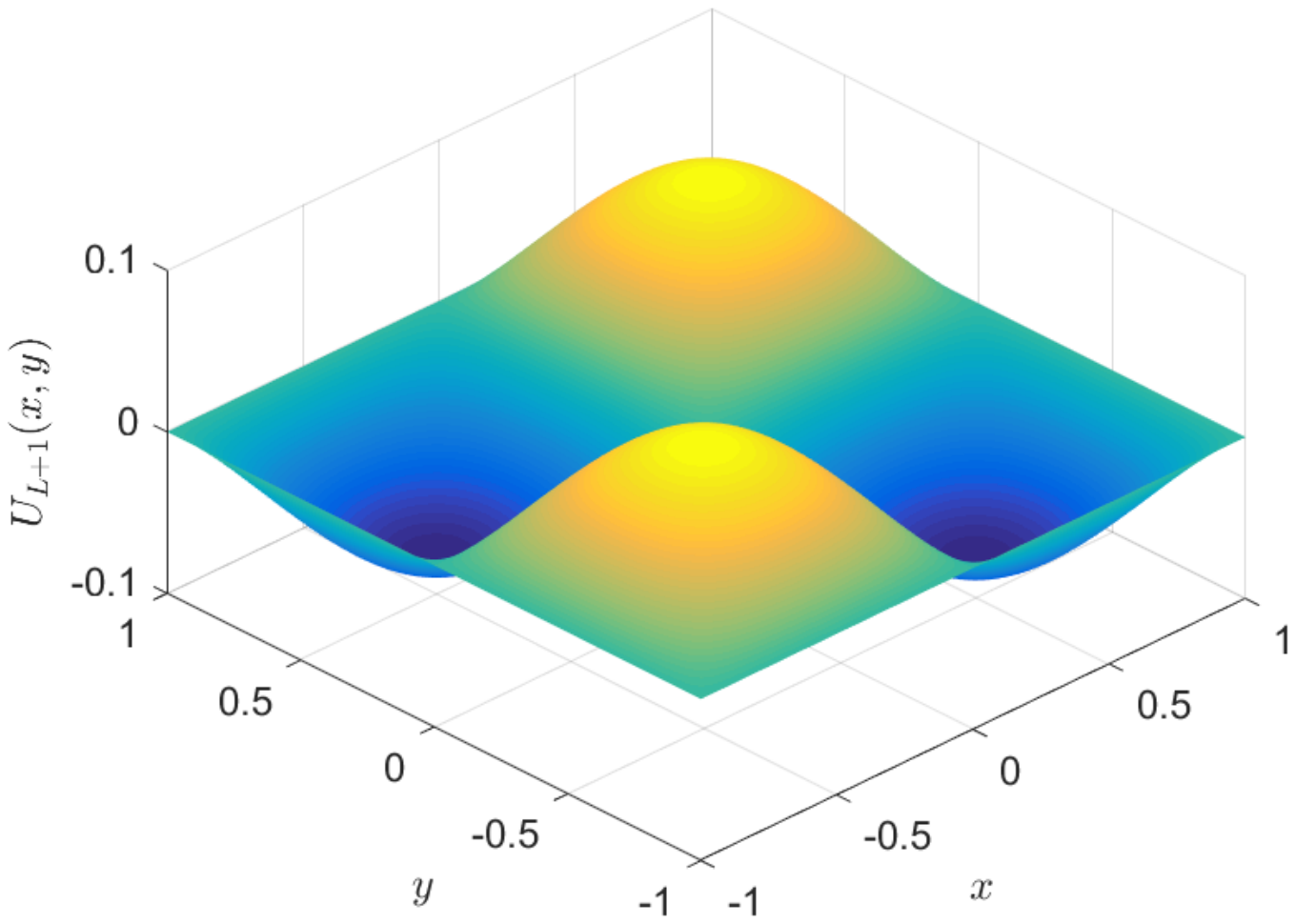}}
	\end{tabular}
	\caption{Numerical solutions for Example \ref{2d-Rec} under different $\alpha$}
	\label{fig-2d-FDM}
\end{figure}
We would also like to compare our scheme with Bonito-Pasciak scheme \cite{bonito2015numerical} and Aceto-Novati scheme \cite{aceto2019rational}. We employ Example \ref{2d-Rec} with $N_0=25,p=12$ to do the test.  Running few iterations of power method gives $\Lambda=7.5\times 10^{10}$. We still set $\hat{\lambda}=4$ in our scheme. To run the Aceto-Novati scheme, one needs to pick up an external parameter for each $\alpha$ and $N_s$, which actually requires an estimate of $\lambda_{h,min}$. Here we also take $4$ as its approximation. The  results are presented in Fig.\ref{fig-compare}. One can observe that our scheme gives better results for $N_s$ large enough. In fact, as a function of $N_s$, the error of Bonito-Pasciak scheme is $\mathcal{O}(e^{-\pi\sqrt{\alpha(1-\alpha)N_s}})$, which indicates that the scheme will degenerate for $\alpha$ close to $1$ or $0$. As for  Aceto-Novati scheme, the error bound given by the authors is
$$
C\sin(\alpha\pi)\lambda_{h,max}^{-\alpha/2}\exp\left({-4N_s\left(\frac{\lambda_{h,min}}{\lambda_{h,max}}\right)^{1/4}}\right),
$$
which implies slow convergence when  the condition number of $\cLgh$ is large, especially in the case of  $\alpha<0.5$.
\begin{figure}[!htp]
	\centering
	\begin{tabular}{c c c}
		\subfigure[$\alpha=0.1$]{\includegraphics[width=0.32\textwidth,trim=140 210 170 250, clip]{./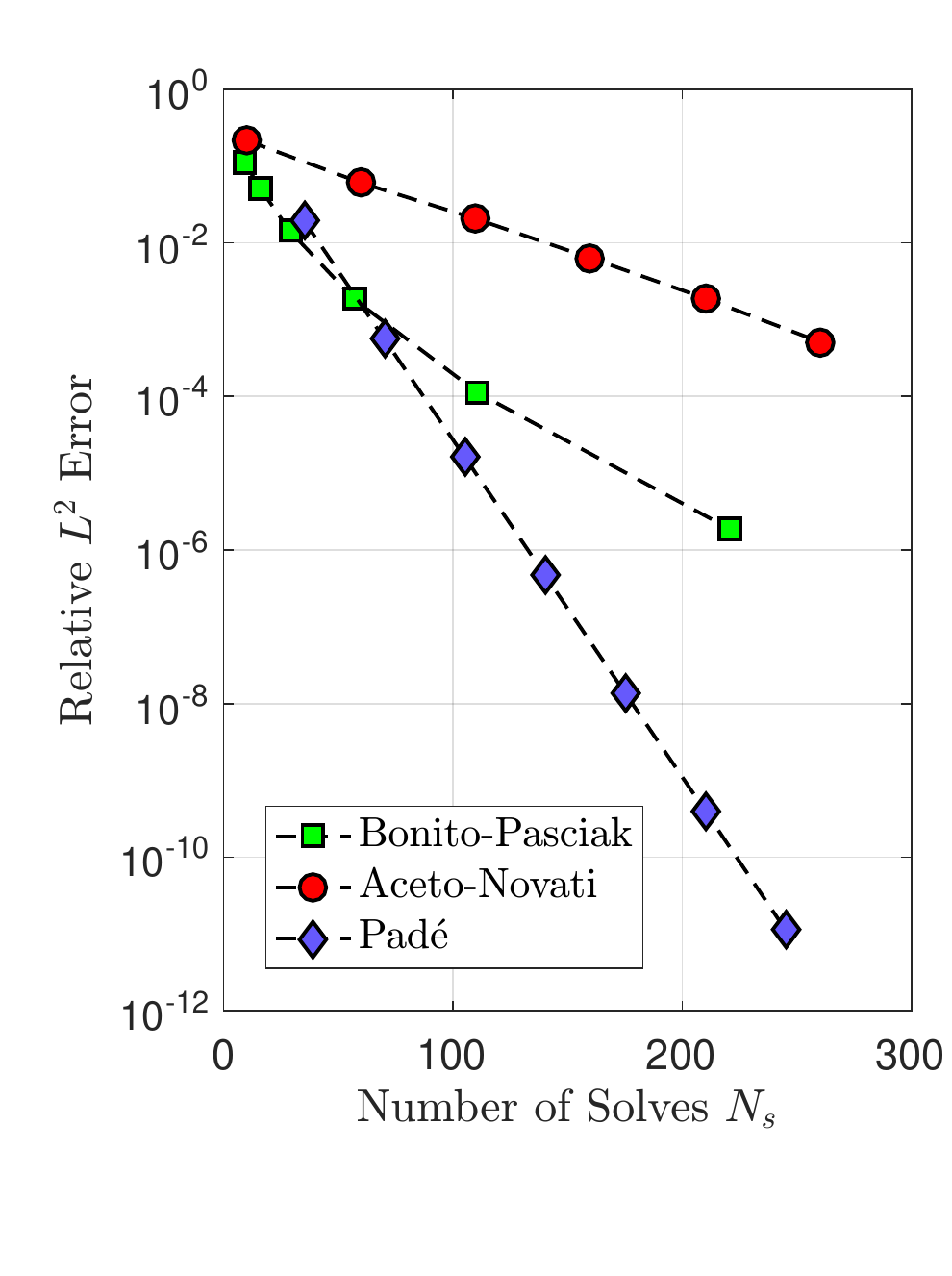}}&
		
		\subfigure[$\alpha=0.5$]{\includegraphics[width=0.32\textwidth,trim=140 210 170 250, clip]{./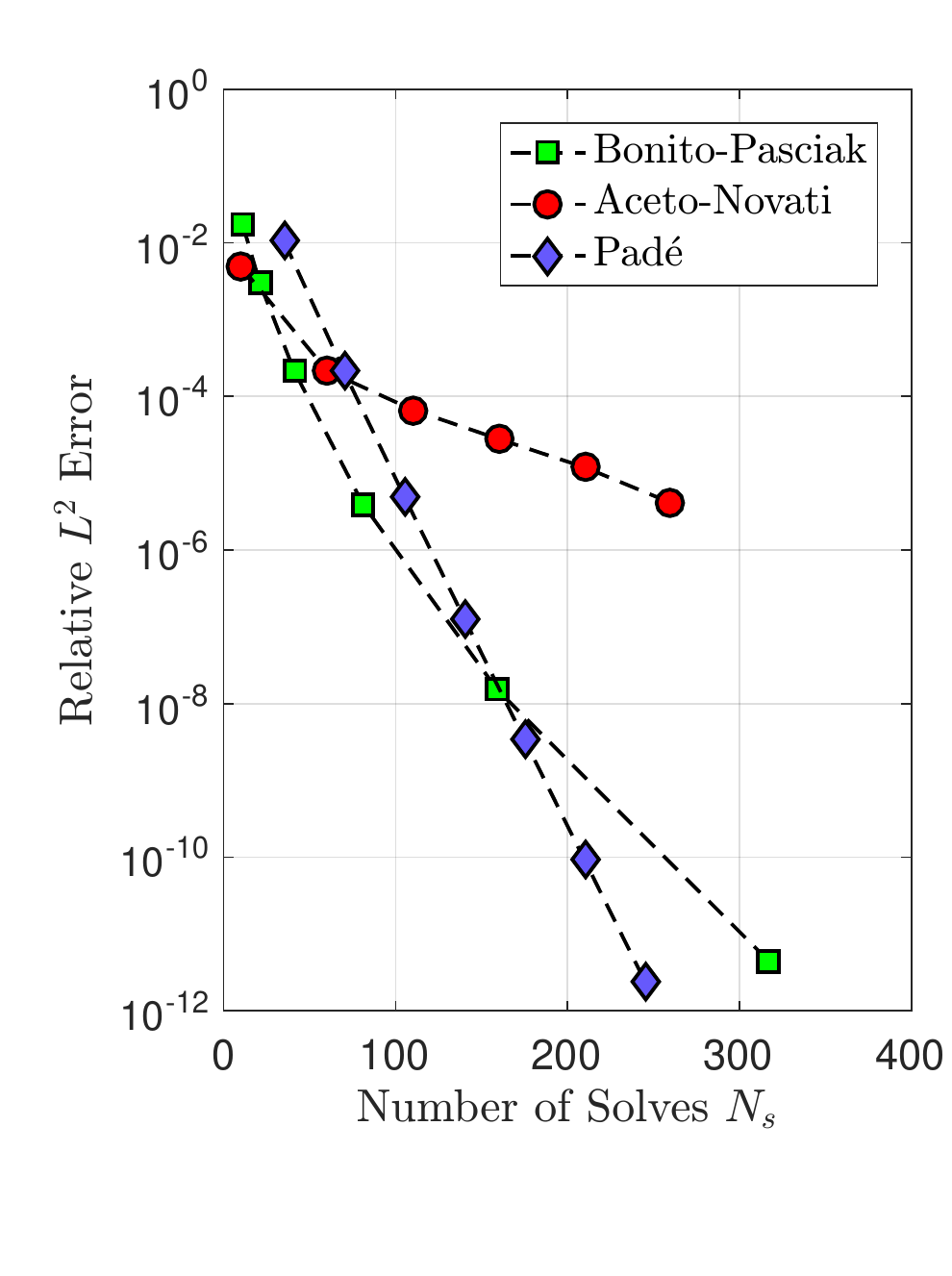}}&
		
		\subfigure[$\alpha=0.9$]{\includegraphics[width=0.32\textwidth,trim=140 210 170 250, clip]{./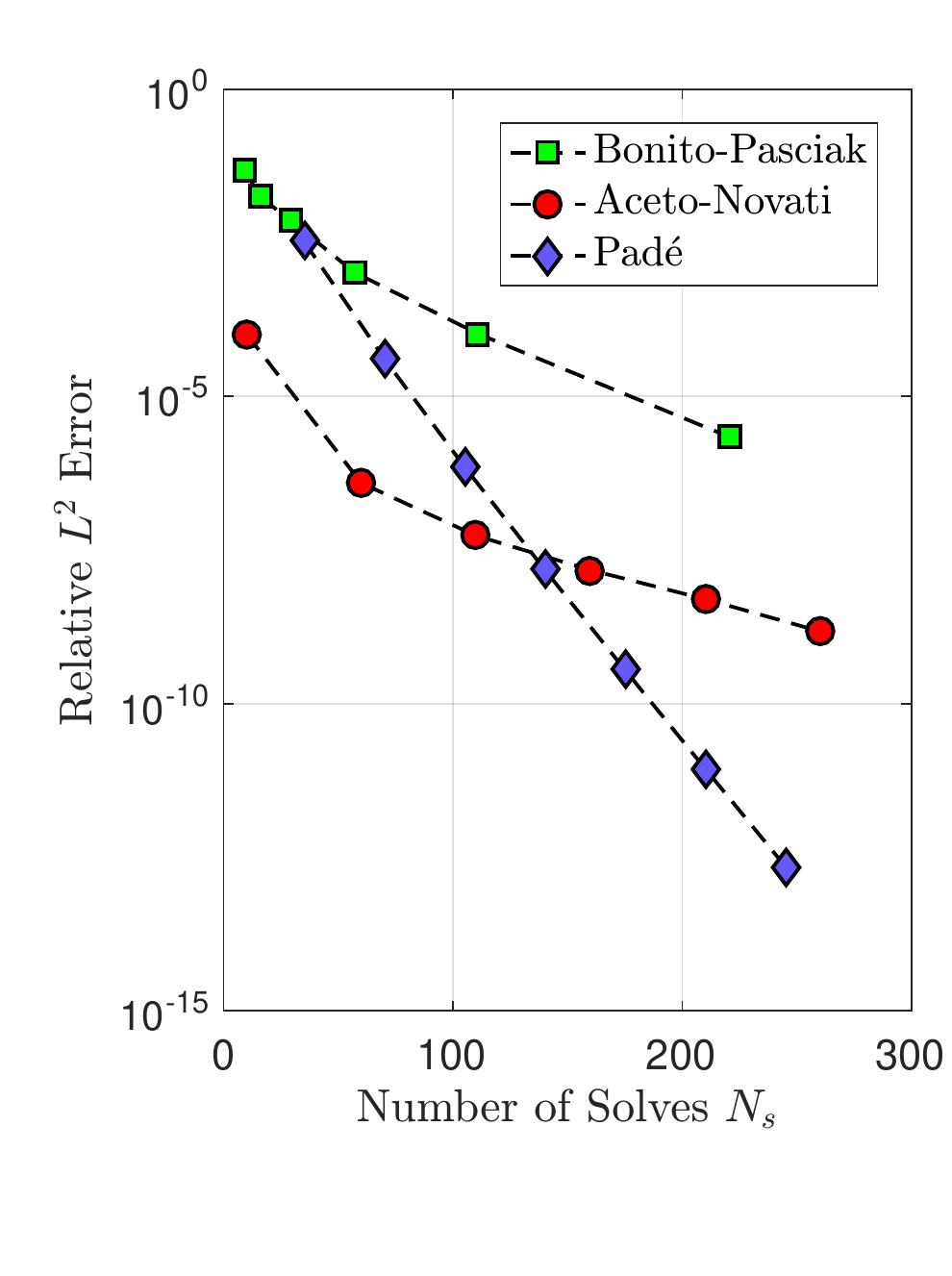}}
	\end{tabular}
	\caption{Error plots for different schemes under different $\alpha$}
	\label{fig-compare}
\end{figure}

\begin{example}\label{2d-sphere}
	 To verify Theorem \ref{The-est-h}, we consider $$\cM=\left\{(x_1,x_2,x_3)|x_1^2+x_2^2+x_3^2=1\right\},$$
	  say, $\cM$ is a  spherical surface in $\mathbb{R}^3$ with radius $r=1$.  We take $\cLg=-\Delta_\cM$  and 
	\begin{equation*}
		f=\left\{ \begin{matrix}
			1,\ \ & \mbox{for } &  x_3\ge 0;\\
			-1\ \ & \mbox{for } & x_3<0,\\
		\end{matrix}
		\right.
	\end{equation*}
then $f\in \mH^{1/2-\epsilon}(\cM)$. In this case, it is known that the eigenfunctions of $\cLg$ are spherical harmonic functions and corresponding eigenvalues are $\{\lambda_n=n(n+1)\}_{n=1}^\infty$. We exclude $\lambda_0=0$ since the corresponding non-trivial constant eigenfunction does not satisfy the integral condition $\int_{\cM}\psi_0 \,d\cM=0$. Utilizing the fact that  $f$ is  constant along the latitudinal direction, and after manipulations one can write down the explicit formula for $u$:
\begin{equation*}
	u(x_1,x_2,x_3)=\sum_{n=1}^\infty \lambda_n^{-\alpha}\frac{2n+1}{2n}J_{n-1}^{1,1}(0)J^{0,0}_n(x_3).
\end{equation*}
\end{example}

We choose $f_h=\pi_h f^{-\nell}$. Table \ref{Tab-Sphere} presents $\|u^{-\ell}-u_h\|_{L^2(\cM_h)}$ under different $\cM_h$ where we use $Dof$ to represent the number of vertexes of $\cM_h$. The numerical convergence rates against $h$ are obtained by 
\begin{equation}\label{conv-rate}
conv.\, rate=	\frac{\ln \|u^{-\ell}-u_h\|_{L^2(\cM_h)} - \ln \|u^{-\ell}-u_{h'}\|_{L^2(\cM_{h'})}}{\ln \sqrt{Dof'} -\ln \sqrt{Dof}}
\end{equation}
where $u_h$ is obtained by our scheme with $m=3$ and $\tlambda=1$. The numbers in brackets are theoretical rates given in Theorem \ref{The-est-h}. One can see that the results consist well with our theoretical analysis. The three images in Figure \ref{Fig-Sphere} show the numerical solutions for $\alpha=0.01,0.5$ and $0.99$. 

\begin{table}[htbp!]
	\centering
	\caption{$\|u^{-\ell}-u_h\|_{L^2(\cM_h)}$ under different mesh}
	\label{Tab-Sphere}
	\begin{tabular}{c|cccc}
		\toprule 
		$\alpha\backslash Dof$ &    153       & 606         & 2418         & 9666       \\
		\hline
		$\alpha=0.01$          &3.8192e-01   &2.6089e-01   &1.8143e-01   &1.2642e-01  \\
		& (0.5)        &  0.55       &   0.53       & 0.52       \\
		\hline
		$\alpha=0.3$           &8.1961e-02   &3.9167e-02   &1.8433e-02   &8.6204e-03  \\
		& (1.1)        & 1.07        & 1.09         & 1.10       \\
		\hline
		$\alpha=0.5$           &2.5687e-02   &1.0153e-02   &3.7672e-03   &1.3619e-03  \\
		& (1.5)        & 1.35        & 1.43         & 1.47       \\	
		\hline	
		$\alpha=0.7$           &1.1322e-02   &3.3840e-03   &9.5737e-04   &2.6565e-04  \\
		& (1.9)        & 1.76        & 1.83         & 1.85       \\
		\hline
		$\alpha=0.99$          &1.8734e-02   &4.8750e-03   &1.2329e-03   &3.0923e-04  \\
		& (2.0)        & 1.96        & 1.99         & 2.00       \\		
		\bottomrule
	\end{tabular}
\end{table}
\begin{figure}[!ht]
	\centering
	\begin{tabular}{ccc}
	\subfigure[$\alpha=0.01$]
	{\includegraphics[width=0.32\textwidth,trim=100 230 60 220, clip]{./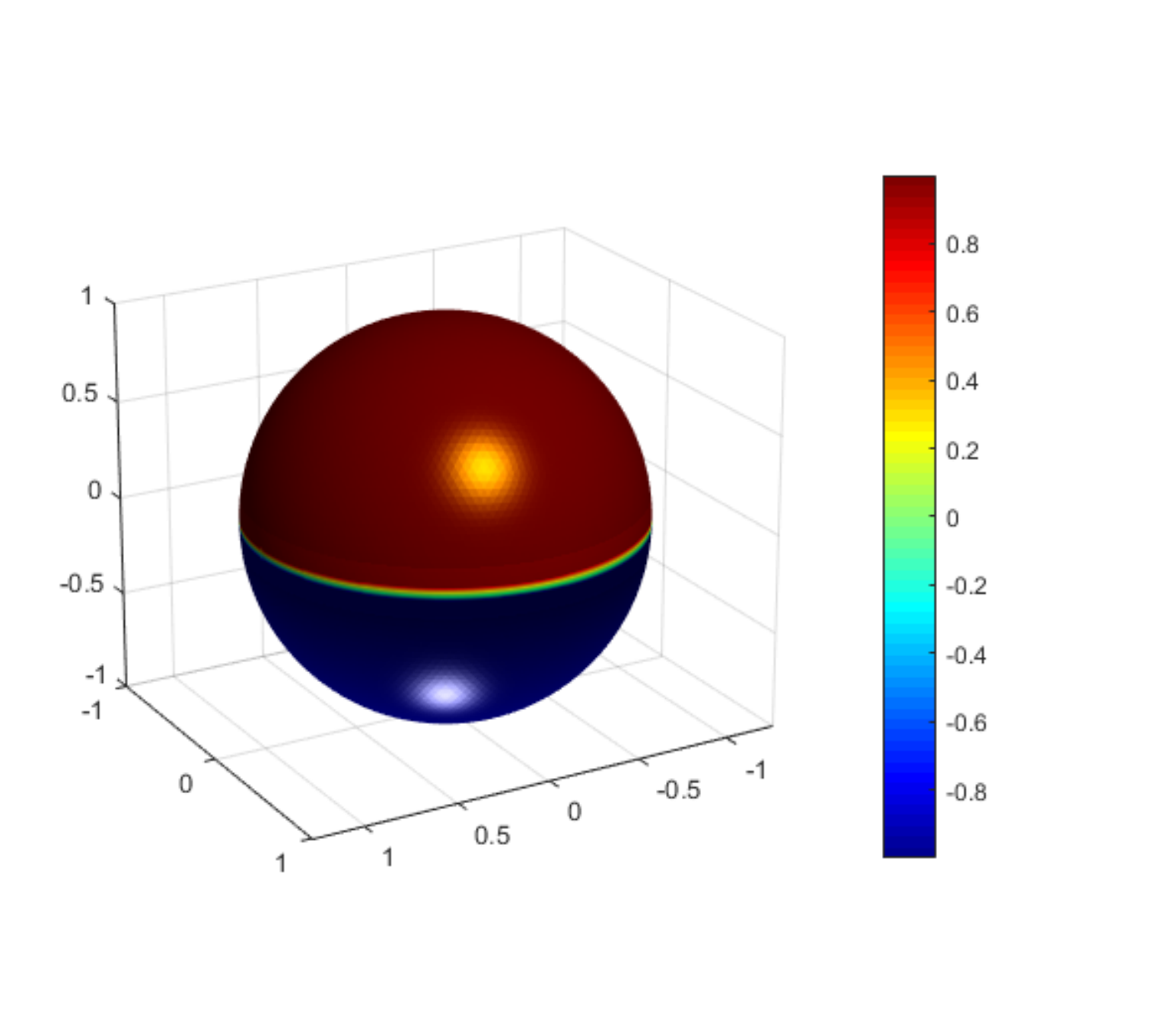}}
	\subfigure[$\alpha=0.5$]
	{\includegraphics[width=0.32\textwidth,trim=100 230 60 220, clip]{./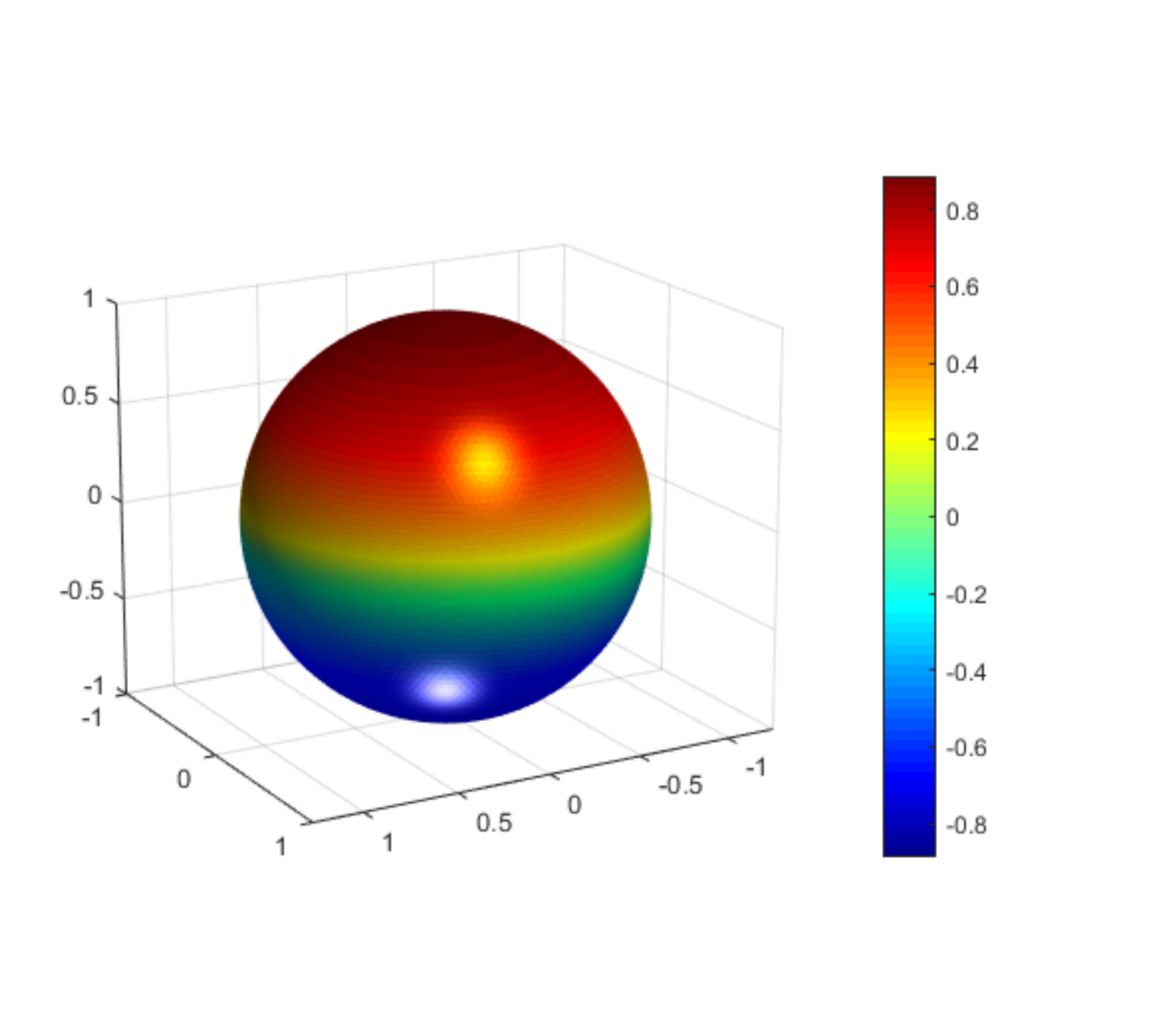}}
	\subfigure[$\alpha=0.99$]
	{\includegraphics[width=0.32\textwidth,trim=100 230 60 220, clip]{./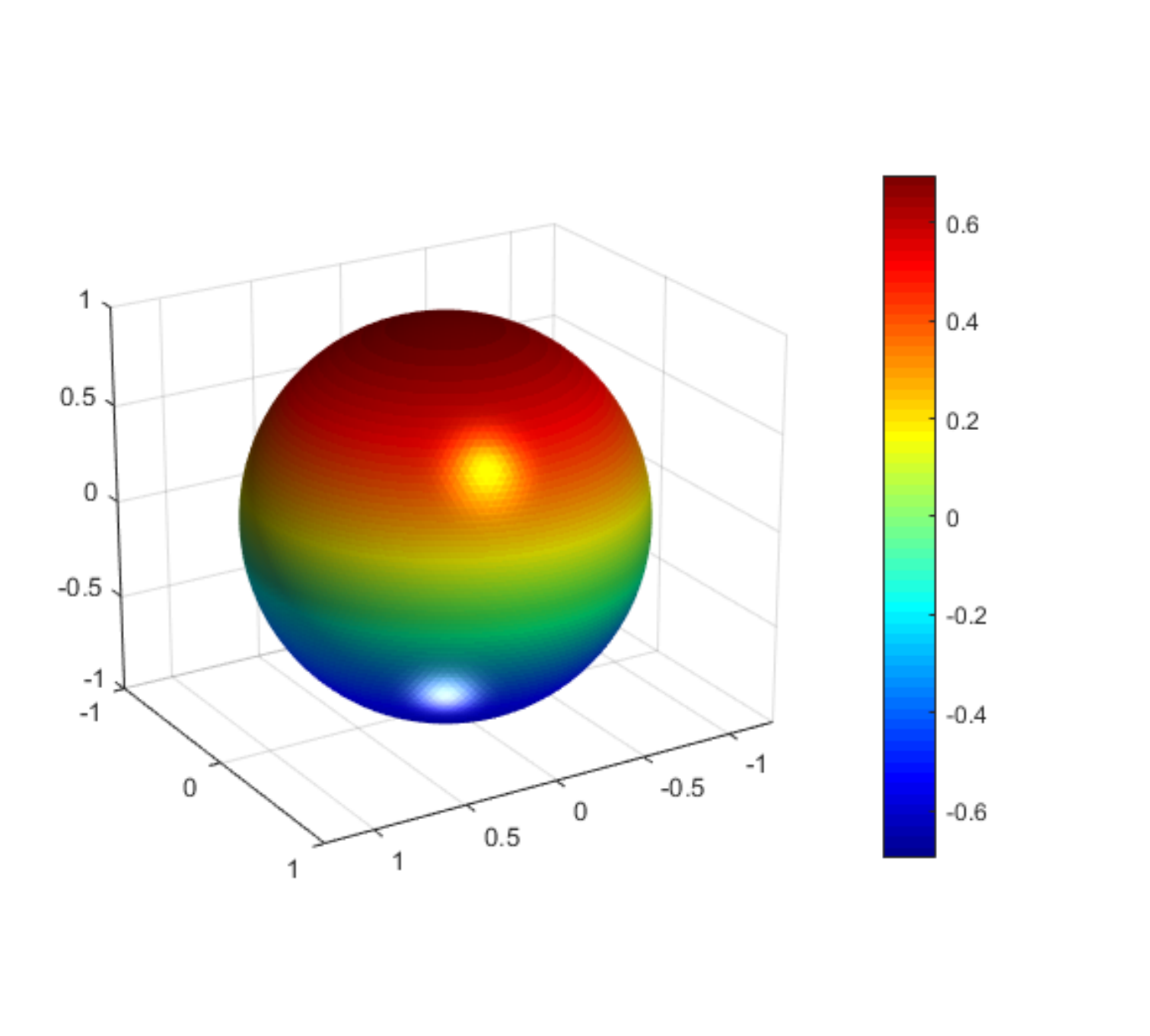}}
	\end{tabular}
	\caption {Numerical solutions for $\alpha=0.01,0.5$ and $0.99$, respectively}
	\label{Fig-Sphere}
\end{figure}

\begin{example}\label{torus}
	In this example we take a torus as $\cM$, which is given parametrically by
	\begin{equation*}
		\vec{x}=[(R+r\cos\varphi_1)\cos\varphi_2,(R+r\cos\varphi_1)\sin\varphi_2,r\sin\varphi_1],\quad \varphi_1,\varphi_2\in[0,2\pi)
	\end{equation*}
with $R=0.5,r=0.2$. We set $\cLg=-\Delta_\cM+\cI$ and 
\begin{equation*}
	f=H\cos\left(\arctan\left(\frac{x_2}{x_1}\right)\right)
\end{equation*}
with $H$ the mean curvature of the torus, see Fig.\ref{Fig-Torus-1}(a) for it. The surface is triangulated by $237568$ simplices with $118784$ vertexes. In this case we know $\lambda_{min}=1$ and we can obtain $\Lambda=1.78\times 10^{6}$ by a few iterations of power method which leads to $L+1=21$. In this case we choose $f_h=I_hf^{-\ell}$. The relative $L^2$-errors with respect to $m$ under various $\alpha$ are presented in Fig.\ref{Fig-Torus-1}(b). Solutions under different $\alpha$ are presented in Fig.\ref{Fig-Torus-2}. One can see that the error decays as we predicted in  Theorem \ref{The-est-h}.

\begin{figure}[!ht]
	\centering
	\begin{tabular}{cc}
		\subfigure[Source term $f$]
		{\includegraphics[width=0.45\textwidth,trim=100 180 60 220, clip]{./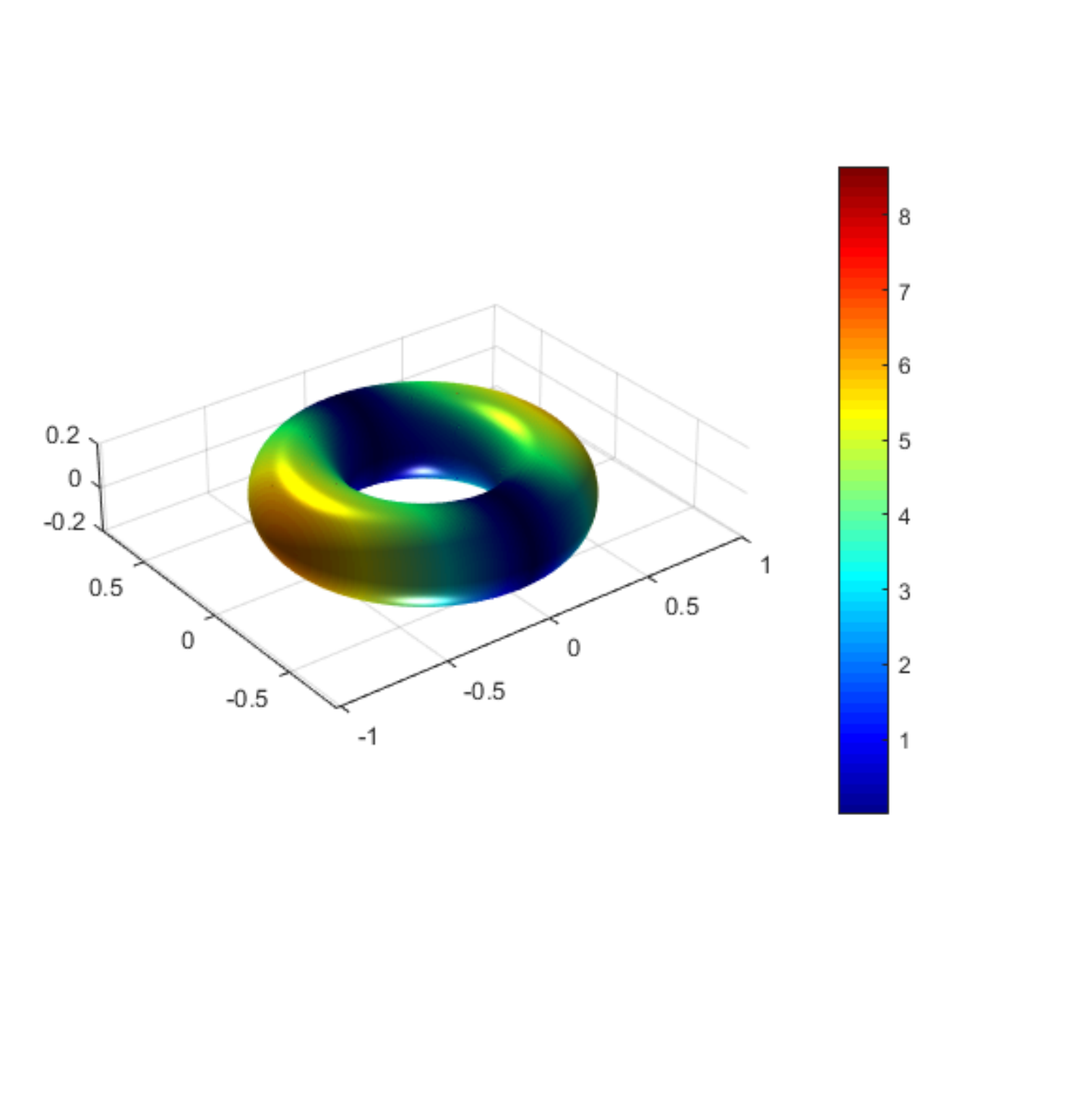}}
		\subfigure[Relative $L^2$-errors against $m$]
		{\includegraphics[width=0.45\textwidth,trim=140 210 170 230, clip]{./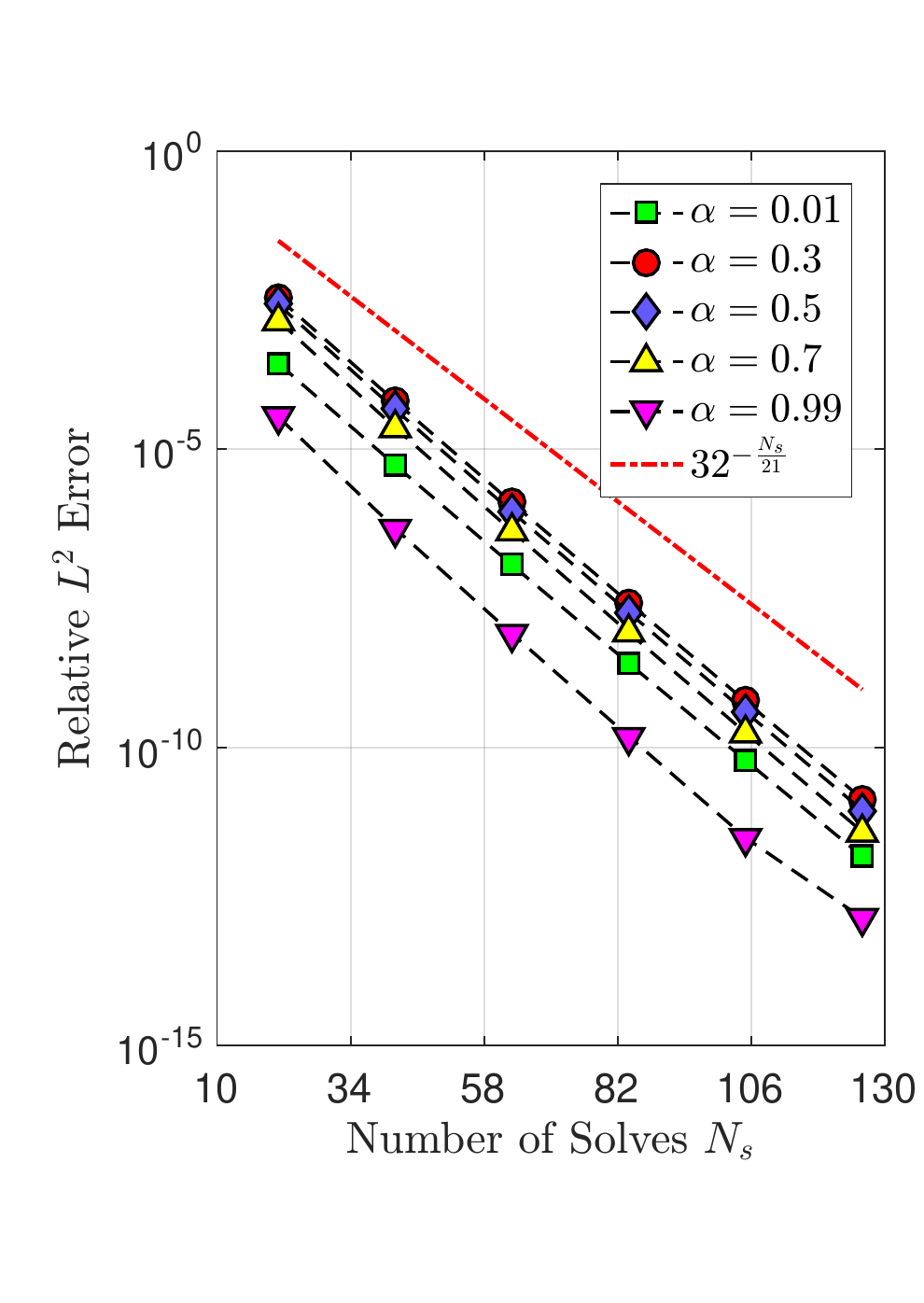}}
	\end{tabular}
	\caption {Source term and relative $L^2$-errors}
	\label{Fig-Torus-1}
\end{figure}
\begin{figure}[!ht]
	\centering
	\begin{tabular}{ccc}
		\subfigure[$\alpha=0.01$]
		{\includegraphics[width=0.32\textwidth,trim=100 230 60 220, clip]{./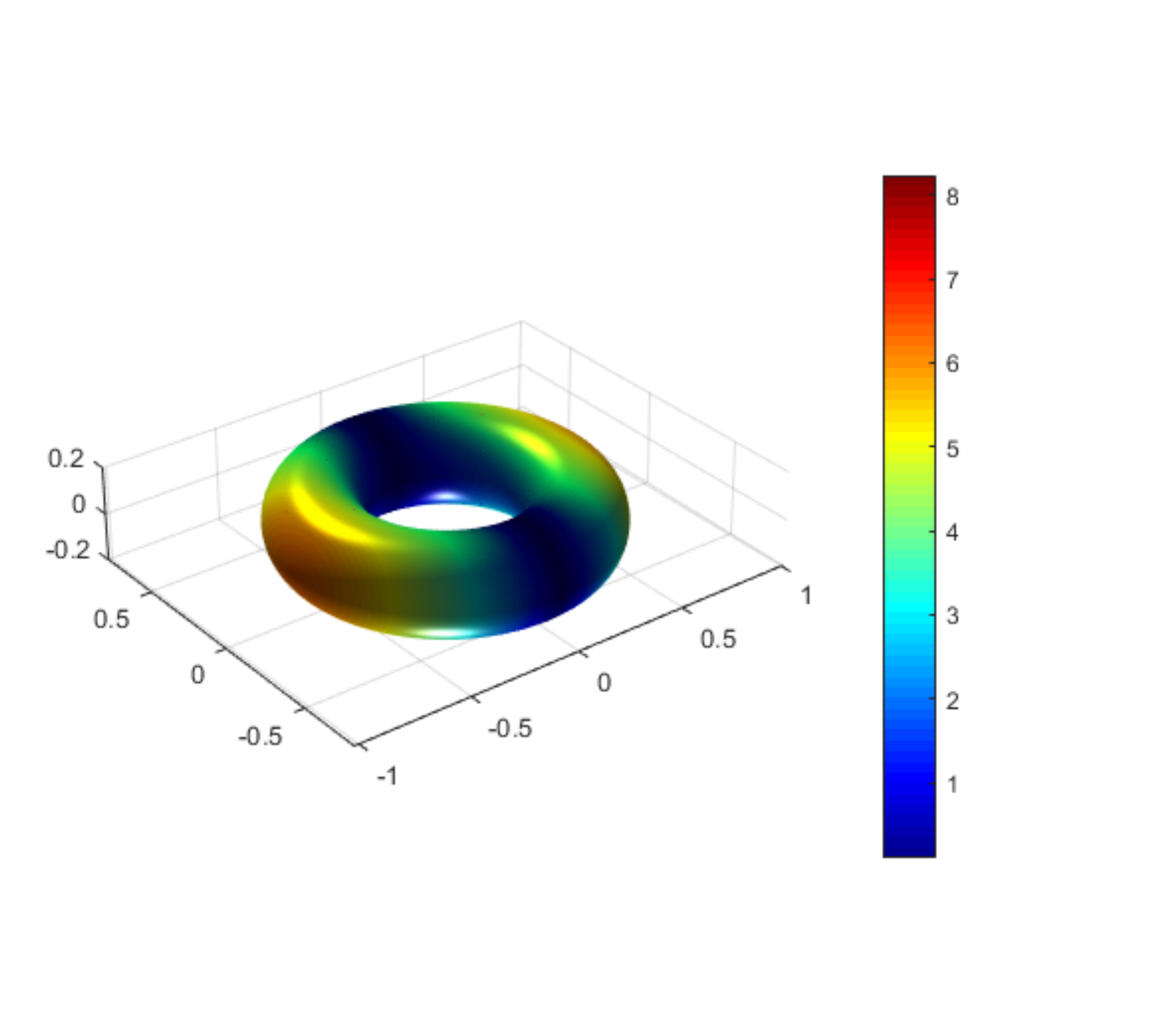}}
		\subfigure[$\alpha=0.5$]
		{\includegraphics[width=0.32\textwidth,trim=100 230 60 220, clip]{./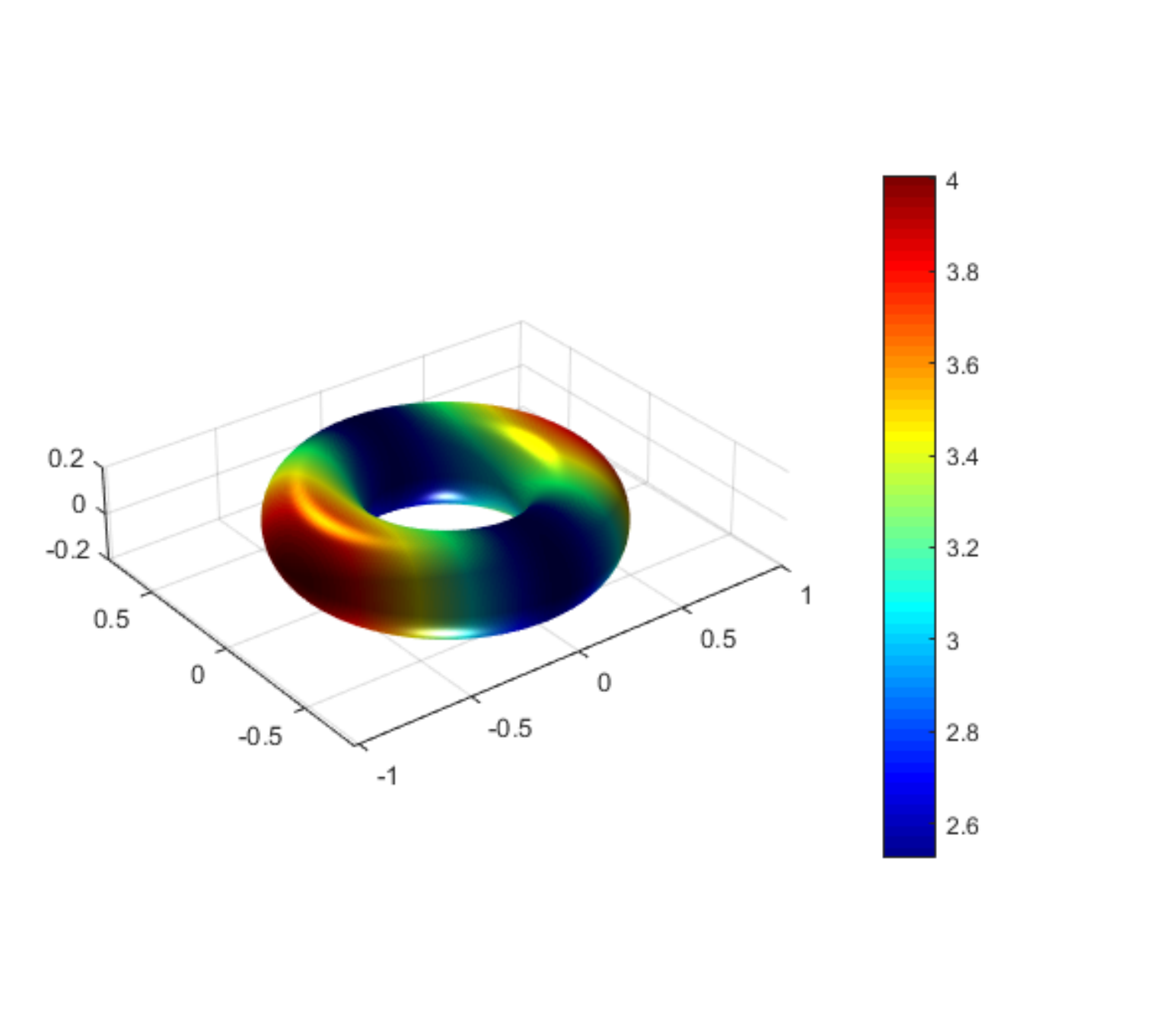}}
		\subfigure[$\alpha=0.99$]
		{\includegraphics[width=0.32\textwidth,trim=100 230 60 220, clip]{./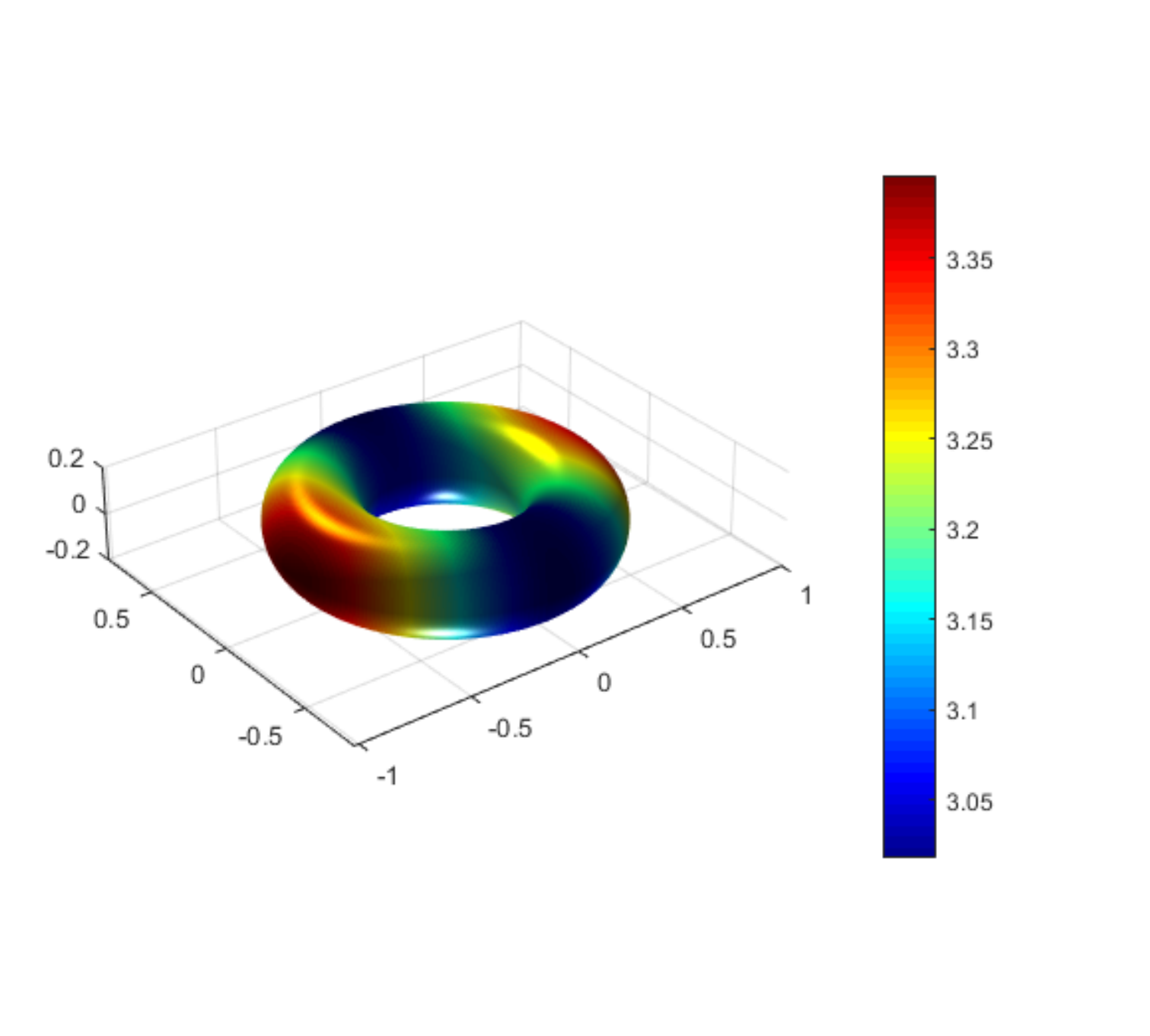}}
	\end{tabular}
	\caption {Numerical solutions for $\alpha=0.01,0.5$ and $0.99$, respectively}
	\label{Fig-Torus-2}
\end{figure}
\end{example}

\section*{Conclusion}
In this paper we proposed a Pad\'e-parametric FEM scheme to approximate fractional powers of self-adjoint operators on manifolds. Rigorous error analysis was carried out and sharp error bound for the fully discrete scheme was presented. It is also worth to point out that our method is robust with respect to $\alpha$. Several numerical tests verified the theoretical analysis and also demonstrated the efficiency and robustness of our method.

\section*{Acknowledgement}
The author would like to express sincere thanks to Prof. Raytcho Lazarov and Prof. Joseph Pasciak who initialized this Pad\'e approximation project four years ago, and some ideas in Section \ref{Sec-2} are also from them.

\bibliographystyle{amsplain}
\bibliography{references}
\end{document}